\newtheorem{theorem}{Theorem}[section]
\newtheorem{lemma}[theorem]{Lemma}
\newtheorem{proposition}[theorem]{Proposition}
\newtheorem{corollary}[theorem]{Corollary}
\newtheorem{claim}[theorem]{Claim}
\newtheorem{question}[theorem]{Question}
\theoremstyle{definition}
\renewcommand\sout{}  
\begin{document}
\title{Toroidal graphs containing neither $K_5^{-}$ nor $6$-cycles are $4$-choosable}
\author{Ilkyoo Choi\thanks{Department of Mathematics, University of Illinois at Urbana-Champaign. \texttt{ichoi4@illinois.edu}} }
\date\today
\maketitle
\begin{abstract}
The choosability $\chi_\ell(G)$ of a graph $G$ is the minimum $k$ such that having $k$ colors available at each vertex guarantees a proper coloring. 
Given a toroidal graph $G$, it is known that $\chi_\ell(G)\leq 7$, and $\chi_\ell(G)=7$ if and only if $G$ contains $K_7$.
Cai, Wang, and Zhu proved that a toroidal graph $G$ without $7$-cycles is $6$-choosable, and $\chi_\ell(G)=6$ if and only if $G$ contains $K_6$. 
They also prove that a toroidal graph $G$ without $6$-cycles is $5$-choosable, and conjecture that $\chi_\ell(G)=5$ if and only if $G$ contains $K_5$. 
We disprove this conjecture by constructing an infinite family of non-$4$-colorable toroidal graphs with neither $K_5$ nor cycles of length at least $6$; moreover, this family of graphs is embeddable on every surface except the plane and the projective plane. 
Instead, we prove the following slightly weaker statement suggested by Zhu: toroidal graphs containing neither $K^-_5$ (a $K_5$ missing one edge) nor $6$-cycles are $4$-choosable. 
This is sharp in the sense that forbidding only one of the two structures does not ensure that the graph is $4$-choosable. 
\end{abstract}

\section{Introduction}

Let $[n]=\{1, \ldots, n\}$. 
Only finite, simple graphs are considered. 
Let $K_n$ be the complete graph on $n$ vertices. 
If $H$ is a subgraph of $G$, then we write $H\subseteq G$.
Given a graph $G$, let $V(G)$ and $E(G)$ denote the vertex set and edge set of $G$, respectively. 
Given a graph $G$, a {\it list assignment} $L$ is a function on $V(G)$ that assigns to each vertex $v$ a list $L(v)$ of (\emph{available}) colors.
An \emph{$L$-coloring} is a vertex coloring $f$ such that $f(v) \in L(v)$ for each vertex $v$ and $f(x)\neq f(y)$ for each edge $xy$.
A graph $G$ is said to be {\it $k$-choosable} if there is an $L$-coloring for each list assignment $L$ where $|L(v)|\geq k$ for each vertex $v$. 
The minimum such $k$ is known as the {\it choosability} of $G$, denoted $\chi_\ell(G)$.

Thomassen \cite{1994Th} proved that planar graphs are $5$-choosable, and Voigt \cite{1993Vo} constructed a planar graph that is not $4$-choosable. 
It is known that \cite{1999LaXuLi,2002WaLi,2001WaLi,2009Fa} that planar graphs without $k$-cycles for some $k\in\{3, 4, 5, 6, 7\}$ are $4$-choosable.
There is also a vast literature on forbidding cycles in a planar graph to ensure that it is $3$-choosable; we direct the readers to \cite{2013Bo} for a very thorough survey.

Regarding toroidal graphs, B\"ohme, Mohar, and Stiebitz \cite{1999BoMoSt} showed that they are $7$-choosable, and they characterized when the choosability of a toroidal graph is $7$ by proving that a toroidal graph $G$ has $\chi_\ell(G)=7$ if and only if $K_7\subseteq G$. 
Cai, Wang, and Zhu \cite{2010CaWaZh} establish several tight results on the choosabiltiy of a toroidal graph with no short cycles. 
In particular, they prove that a toroidal graph $G$ with no $7$-cycles is $6$-choosable, and $\chi_\ell(G)=6$ if and only if $K_6\subseteq G$. 
They also prove that a toroidal graph with no $6$-cycles is $5$-choosable, and conjecture that $\chi_\ell(G)=5$ if and only if $K_5\subseteq G$. 

We disprove this conjecture by constructing an infinite family of toroidal graphs containing neither $K_5$ nor $6$-cycles that is not even $4$-colorable. (See Theorem~\ref{noC6}.)
It is worth mentioning that this infinite family of graphs is embeddable on any surface, orientable or non-orientable, except the plane and the projective plane. 
This shows that for the family of graphs embeddable on some surface, forbidding a cycle of length $6$ and $K_5$ is not enough to ensure $4$-choosability for any surface except the plane and the projective plane. 
Recall that forbidding a cycle of length $6$ is enough to ensure $4$-choosability in a planar graph.
Therefore, the following question by Kostochka~\cite{00Ko} is natural:

\begin{question}
Is every projective plane graph containing neither $K_5$ nor $6$-cycles $4$-choosable?
\end{question}

The main result of this paper is a different weakening of the original conjecture suggested by Zhu~\cite{00Zh}: 

\begin{theorem}\label{result}
A toroidal graph containing neither $K^-_5$ nor $6$-cycles is $4$-choosable. 
\end{theorem}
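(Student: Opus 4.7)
My plan is to argue by contradiction using a minimum counterexample together with a discharging argument on the torus. Let $G$ be a toroidal graph containing neither $K_5^-$ nor a $6$-cycle that admits a list assignment $L$ with $|L(v)|\geq 4$ for every $v$ and no $L$-coloring, chosen to minimize $|V(G)|+|E(G)|$, and fix an embedding of $G$ on the torus with face set $F(G)$. Standard greedy arguments give $\delta(G)\geq 4$, since a vertex of degree at most $3$ could be removed, the rest $L$-colored by minimality, and colored last from its available list. Because $G$ has no $6$-cycle, every face length $\ell(f)$ satisfies $\ell(f)\in\{3,4,5\}\cup\{7,8,\dots\}$, and many local configurations combining short faces instantly create a $6$-cycle and are therefore forbidden in $G$.

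\textbf{Reducible configurations.} The core technical work is to rule out, via list-coloring reductions, a list of local substructures around $4$-vertices and short faces. Key ingredients I would establish: (i) since $K_5^-$ is forbidden, the four neighbors of any $4$-vertex $v$ induce at most $4$ of the $\binom{4}{2}=6$ possible edges, so $N(v)$ contains no $K_4^-$; (ii) a $3$-face incident to two $4$-vertices, or a $4$-vertex lying on two $3$-faces, is reducible via a deletion or identification plus minimality; (iii) a $4$-vertex incident to a $4$-face or $5$-face whose other vertices have low degree is reducible; (iv) various ``$4$-vertex clusters'' along a short face force either a $6$-cycle through the face or a $K_5^-$ on five of the involved vertices. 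Each is proved by deleting a small subset of vertices or edges---sometimes after identifying two non-adjacent vertices whose merged neighborhood still avoids the forbidden structures---then applying minimality to $L$-color the remainder and extending to $G$; the $K_5^-$-freeness is precisely what drives the hardest extensions through.

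\textbf{Discharging.} Since the torus has Euler characteristic $0$, Euler's formula gives
\[
\sum_{v\in V(G)}(d(v)-4)+\sum_{f\in F(G)}(\ell(f)-4)=0.
\]
I would assign the initial charge $\mu(v)=d(v)-4$ to each vertex and $\mu(f)=\ell(f)-4$ to each face; positive charge is supplied by vertices of degree at least $5$ and by faces of length at least $7$, while the deficit sits at $4$-vertices, $3$-faces, $4$-faces, and $5$-faces. The rules would be along the lines of: each $\geq 7$-face sends a fixed amount to each incident $4$-vertex and to each incident short face across an edge, and each vertex of degree $\geq 5$ sends a fixed amount to incident short faces and to $4$-vertices with which it shares a short face. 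The constants are calibrated using the reducible configurations from the previous step so that every vertex and face ends with nonnegative charge and at least one element has strictly positive final charge, contradicting the total of $0$.

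\textbf{Main obstacle.} The hardest step will be the joint use of the two hypotheses in analyzing $5$-faces carrying several $4$-vertices and clusters of $3$-faces that share a $4$-vertex: here the absence of $6$-cycles restricts the global adjacencies of the surrounding faces, while the absence of $K_5^-$ is what allows an $L$-coloring of the reduced graph to extend. A secondary difficulty, absent in the planar analogues, is that the initial total charge on the torus is $0$ rather than strictly negative, so the discharging has to be tight: no slack is available, every exchange must be exact, and I expect this will force discharging rules sensitive both to the precise length of large faces and to the degrees of the vertices receiving charge.
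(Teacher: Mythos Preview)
Your outline has the right shape (minimum counterexample, structural lemmas, discharging) but contains concrete errors and misses the key closing idea. First, faces of length $6$ are \emph{not} excluded: a $6$-face whose boundary walk repeats a vertex contains no $6$-cycle, and such \emph{degenerate} $6$-faces must be handled explicitly throughout (see Proposition~\ref{degen6} and Corollaries~\ref{3bad-bigface}--\ref{3bad-7-44}). Second, your claimed reductions in (ii) and (iii) fail: a $4$-vertex lying on two $3$-faces --- indeed even on three or four --- is \emph{not} reducible, since after deleting it and colouring the rest, all four neighbours may be coloured and no list colour survives. In the actual proof these ``bad'' and ``special'' $4$-vertices persist, and the discharging rules (R1)--(R6) are designed precisely to feed them charge from long faces and from ``responsible'' higher-degree neighbours. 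The only genuine reductions used are the Gallai-type ones of Lemma~\ref{reducible} (a $4$-regular even cycle or chorded cycle), which are far weaker than what you assume.

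Most importantly, your endgame cannot work as written. On the torus the total charge is exactly $0$, and you propose to reach a contradiction by making some element strictly positive; but the structural lemmas do not yield that unconditionally. The paper instead proves nonnegativity everywhere together with \emph{strict} positivity at every bad $4$-vertex and at every $5^{+}$-vertex not adjacent to a bad vertex. From total charge $0$ this forces $G$ to have no bad vertices and no $5^{+}$-vertices, i.e.\ $G$ is $4$-regular; then $K_5^{-}$-freeness implies $K_5$-freeness, and the degree-choosability criterion (Theorem~\ref{deg-choos}) shows $G$ is $4$-choosable, the desired contradiction. This final appeal to degree-choosability is the pivot on which the whole argument turns, and it is absent from your plan.
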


This theorem is sharp in the sense that forbidding only one of a $K^-_5$ or $6$-cycles in a toroidal graph does not guarantee that it is $4$-choosable. 

In the figures throughout this paper, the white vertices do not have incident edges besides the ones drawn, and the black vertices may have other incident edges. 

\begin{figure}[h]
\centering
\includegraphics{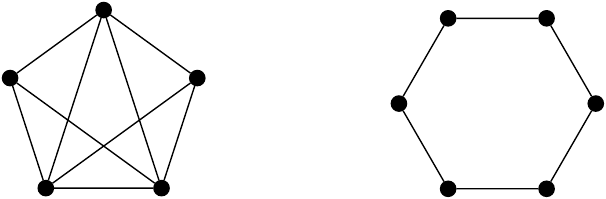}
\caption{Forbidden configurations.
}\label{fig:tikz:forbidden}
\end{figure}

In section $2$, we prove many structural lemmas needed in Section $3$, which is where we prove Theorem~\ref{result} using discharging. 
In Section $4$, we display the sharpness examples of Theorem~\ref{result}.

\section{Lemmas}

From now on, let $G$ be a counterexample to Theorem~\ref{result} with the fewest number of vertices, and fix some embedding of $G$.
It is easy to see that the minimum degree of (a vertex of) $G$ is at least $4$ and $G$ is connected.

The {\it neighborhood} of a vertex $v$, denoted $N(v)$, is the set of vertices adjacent to $v$, and let 
$N[v]=N(v)\cup\{v\}$.
The {\it degree} of a vertex $v$, denoted $d(v)$, is $|N(v)|$.
The {\it degree} of a face $f$, denoted $d(f)$, is the length of $f$. 
A {\it $k$-vertex}, {\it $k^+$-vertex}, {\it $k$-face}, {\it $k^+$-face} is a vertex of degree $k$, a vertex of degree at least $k$, a face of degree $k$, and a face of degree at least $k$, respectively. 

A graph is {\it degree-choosable} if there is an $L$-coloring for each list assignment $L$ where $|L(v)|\geq d(v)$ for each vertex $v$. 
The following is a very well-known fact.

\begin{theorem}\label{deg-choos}
A graph is degree-choosable unless each maximal $2$-connected subgraph is either a complete graph or an odd cycle. 
\end{theorem}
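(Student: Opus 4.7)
The plan is to prove the stated characterization (a classical result) by handling the two directions separately.

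For the forward direction, that graphs whose every block is a $K_n$ or an odd cycle (call these Gallai trees) are not degree-choosable, I would construct a bad list assignment by induction on the number of blocks. A single $K_n$ is defeated by the common list $\{1,\ldots,n-1\}$; an odd cycle is defeated by the standard rotation of $\{1,2\},\{2,3\},\{1,3\}$ around the cycle. For a general Gallai tree $G$, I would pick a leaf block $B$ attached at a cut vertex $v$, apply induction to $G - (B - v)$ to obtain a bad list assignment that forces a specific color on $v$, and extend inside $B$ with a list assignment whose $v$-list contains the forced color; the key calculation is that $K_n$ and odd cycles admit no proper list-coloring extension from a precolored vertex when every other vertex has list size equal to its degree.

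For the reverse direction, suppose $G$ has a block that is neither complete nor an odd cycle, and induct on $|V(G)|$. The workhorse is Rubin's block lemma: any $2$-connected graph $H$ that is neither complete nor an odd cycle contains vertices $u,v$ at distance exactly $2$ with a common neighbor $w$ such that $H - \{u,v\}$ is still connected. Given this, I would first reduce to the $2$-connected case: if $G$ has a cut vertex, peel off a leaf block that does not contain the witnessing non-Gallai block, list-color the remainder by induction, and extend across the cut vertex using the leaf block's structure. For $G$ itself $2$-connected, apply Rubin's lemma to obtain such $u,v,w$, and a list assignment $L$ with $|L(x)| \geq d(x)$. If $L(u) \neq L(v)$, pick $c \in L(u) \setminus L(v)$, color $u$ with $c$, delete $c$ from the lists of $N(u)$, and apply induction to $G - u$; since each remaining vertex loses at most one color but also at most one neighbor, the degree-choosability hypothesis is preserved. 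If $L(u) = L(v)$, color both $u$ and $v$ with a common color $c$ (legal as $uv \notin E$), remove $c$ from the lists of $N(u) \cup N(v)$, and induct on $G - \{u,v\}$; the common neighbor $w$ loses one color but two neighbors, while every other vertex loses at most one of each.

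The main obstacle is proving Rubin's block lemma itself: a careful structural argument is required to show that a $2$-connected graph that is neither complete nor an odd cycle always admits such a distance-$2$ pair whose removal preserves connectivity. Standard approaches go via an ear decomposition or by locating an induced even cycle and then growing it. A secondary, fiddlier point is the non-$2$-connected reduction, where one must track lists and degrees carefully across cut vertices to ensure that the inductive hypothesis still applies and that the witnessing block remains intact during the peeling; this bookkeeping should be routine once the $2$-connected case is established.
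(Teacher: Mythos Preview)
The paper does not prove Theorem~\ref{deg-choos}; it is introduced as ``a very well-known fact'' and used as a black box (this is the classical Borodin and Erd\H{o}s--Rubin--Taylor characterization). There is therefore no paper proof to compare your outline against.

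Your sketch follows the standard route, but there is a genuine gap in the reverse direction. In the case $L(u)\neq L(v)$ you color $u$ with some $c\in L(u)\setminus L(v)$ and then ``apply induction to $G-u$''. Your inductive hypothesis, however, is that the graph has a block that is neither complete nor an odd cycle, and deleting $u$ may destroy this: $G-u$ can be a Gallai tree even when $G$ is not. The observation that list sizes still dominate degrees in $G-u$ is not by itself enough to invoke the induction. The usual repair is to strengthen the statement being proved to: a connected graph $G$ with $|L(x)|\ge d(x)$ for all $x$ is $L$-colorable provided either some vertex has strict excess $|L(x)|>d(x)$ or $G$ is not a Gallai tree. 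One then argues via a greedy ordering that places $u,v$ first and $w$ last (every intermediate vertex has a later neighbor because $G-\{u,v\}$ is connected), using a common color on $u,v$ when the lists intersect; the strengthened hypothesis handles the block-peeling reduction cleanly as well. A minor separate point: the rotated lists $\{1,2\},\{2,3\},\{1,3\}$ do \emph{not} defeat odd cycles (the triangle admits the proper coloring $1,2,3$ from them); the common list $\{1,2\}$ is the correct bad assignment there.
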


A set $S\subseteq V(G)$ of vertices is {\it $k$-regular} if every vertex in $S$ has degree $k$ in $G$.
A {\it chord} is an edge between two non-consecutive vertices on a cycle. 
Let $W_4$ be a $K_5$ missing two edges that are not incident to each other.

\begin{lemma}\label{reducible}
$V(G)$ does not contain any of the following:
\begin{enumerate}[$(i)$]
\item A $4$-regular set $S$ where $G[S]$ is a cycle of even length.
\item A $4$-regular set $S$ where $G[S]$ is a cycle with one chord. 
\item A set $S$ with four vertices of degree $4$ and one vertex of degree $5$ where $G[S]$ is $W_4$. 
\item A set $S$ where $G[S]$ is a $5$-face with one vertex of degree $1$.
\item A set $S$ where $G[S]$ is a $6$-face with one vertex of degree $1$.
\end{enumerate}
\end{lemma}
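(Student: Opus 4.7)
The plan is to prove each clause by the standard minimality argument: assume $G$ contains the described set $S$, delete $S$ (or a carefully chosen subset of $S$) from $G$, use the minimality of $G$ to obtain an $L$-coloring of the smaller graph, and then extend the coloring to $S$ via the residual lists. For every $v \in S$, the residual list has size at least $4-(d_G(v)-d_{G[S]}(v))$; in the all-degree-$4$ cases this equals $d_{G[S]}(v)$, so the extension problem reduces to the degree-choosability of $G[S]$, which is handled by Theorem~\ref{deg-choos}.

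For (i), all residual lists have size $\ge 2$ on an even cycle, which is its only maximal $2$-connected subgraph and is neither complete nor an odd cycle; Theorem~\ref{deg-choos} supplies the extension. For (ii), the two chord-endpoints have residual lists of size $\ge 3$ and the other vertices have residual lists of size $\ge 2$, matching the $G[S]$-degrees. Since a cycle with a chord is $2$-connected and is neither a complete graph nor an odd cycle, Theorem~\ref{deg-choos} again applies. For (iii), each $v\in S$ satisfies $d_G(v)-d_{G[S]}(v)=1$, so every residual list has size $\ge 3$; first color the apex of $W_4$ (the unique $G[S]$-degree-$4$ vertex, which is the degree-$5$ vertex of $G$) from its $3$-list, reducing the other four lists by at most one, and then the outer $4$-cycle admits a $2$-list-coloring because it is even.

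For (iv) and (v), the induced subgraph $G[S]$ contains a pendant vertex of $G[S]$-degree $1$ attached to a face boundary. The plan is the same: delete $S$, color the pendant first (its residual list may be as small as $1$), and then propagate along the face. The critical slack comes from the pendant's attachment vertex, whose $G[S]$-degree is $3$ and whose residual list therefore has size $\ge 3$, large enough to absorb the pendant's forced color and still leave enough room to start the cycle propagation. The main obstacle will be (iv): the $5$-face is odd, so a naive $2$-choosability argument on the face boundary fails, and one must order the color choices carefully around the face so that the $3$-list at the attachment vertex simultaneously avoids the pendant's color and closes the odd cycle. In every case, the bookkeeping reduces the extension step to a small list-coloring problem, solvable either by Theorem~\ref{deg-choos} or by an explicit ordering of the vertices.
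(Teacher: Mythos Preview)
Your handling of (i)--(iii) is correct and matches the paper: delete $S$, colour $G-S$ by minimality, and extend via Theorem~\ref{deg-choos}. For (iii) the paper instead reduces to an earlier item (the four rim vertices of the $W_4$ form a $4$-regular set inducing an even $4$-cycle), but your direct apex-then-rim argument is equally valid.

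The genuine gap is in (iv) and (v). You have misread the statement: ``a $5$-face with one vertex of degree~$1$'' is not a $5$-cycle with an extra pendant attached, but an actual face of length~$5$ (resp.\ $6$) in the fixed embedding whose boundary walk contains a vertex of degree~$1$. Such a vertex $z$ arises exactly when the boundary walk has the form $\dots u\,z\,u\dots$, forcing $uz$ to be a bridge with $z$ alone on the far side, so $d_G(z)=1$. This already contradicts the minimum-degree condition, and that single observation is the paper's entire proof of (iv) and (v).

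Your extension plan fails even on its own terms. Items (iv) and (v) impose no degree hypotheses on the vertices of $S$, so the residual-list bounds you quote (e.g.\ ``size $\ge 3$ at the attachment vertex'') are unjustified. And even granting degree~$4$ throughout, the odd-cycle worry you flag is fatal: take $L(p)=\{1\}$ for the pendant, $L(a)=\{1,2,3\}$ for its attachment, and $\{2,3\}$ on the remaining four cycle vertices; no ordering extends. Indeed, a $5$-cycle with a pendant edge has blocks $C_5$ and $K_2$, each a complete graph or an odd cycle, so Theorem~\ref{deg-choos} explicitly does \emph{not} guarantee degree-choosability here.
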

\sout{\begin{proof}
Assume for the sake of contradiction that $G$ contains a $4$-regular set $S$ described in either $(i)$ or $(ii)$. 
By the minimality of $G$, there exists an $L$-coloring $f$ of $G-S$. 
For $v\in S$, let $L_f(v)=L(v)\setminus\{f(u): u\in N(v)\setminus S\}$. 
By Lemma~\ref{deg-choos}, since $|L_f(v)|$ is at least the degree of $v$ in $G[S]$, it follows that there exists an $L_f$-coloring $g$ of $G[S]$.
By combining $f$ and $g$, we obtain an $L$-coloring of $G$, which contradicts that $G$ is a counterexample. 
$(iii)$ follows from $(ii)$ since $(iii)$ contains $(ii)$ as a subgraph. 
$(iv)$ and $(v)$ also cannot exist since $G$ has minimum degree at least $4$. 
\end{proof}
}

\begin{figure}[h]
\centering
\includegraphics{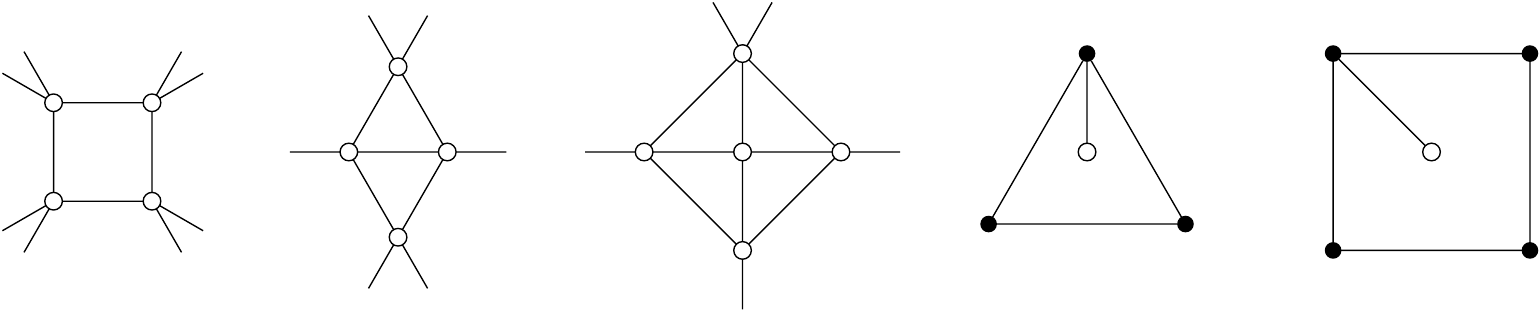}
  \caption{Forbidden configurations of $G$. 
}
  \label{fig:tikz:reducible}
\end{figure}

A $6$-face is {\it degenerate} if some vertex $y$ is incident to it twice; namely, it is of the form $xyzayw$ (see Figure~\ref{fig:tikz:degenerate}).
A list of faces of a vertex $v$ is {\it consecutive} if it is a sublist of the list of faces incident to $v$ in cyclic order. 

\begin{proposition}\label{degen6}
If $f$ is a $6$-face of $G$ where $wxyz$ are consecutive vertices on $f$, then the following holds:
\begin{enumerate}[$(i)$]
\item $f$ must be a degenerate $6$-face.
\item If $xz$ is not an edge, then $wy$ is an edge and $y$ is incident to $f$ twice. 
\item If $w\neq z$, then either $x$ or $y$ is incident to $f$ twice. 
\item $f$ cannot appear consecutively in the list of consecutive faces of a vertex. 
\end{enumerate}
\end{proposition}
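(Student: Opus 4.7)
The plan is to prove parts (i) and (iv) first, as parts (ii) and (iii) will fall out as short corollaries. The easy part is (i): the boundary walk of a $6$-face has length~$6$, and if all six vertices on it were distinct, that walk would be a simple $6$-cycle in $G$, contradicting the fact that $G$ contains no $6$-cycle.

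For (iv), suppose toward contradiction that $f$ appears twice in a row in the cyclic face-list at some vertex $v$. The edge $e = vu$ of $G$ separating those two $f$-corners at $v$ must then have $f$ on both of its sides (at $v$ locally, and hence globally, since a face is a connected region), so the boundary walk of $f$ traverses $e$ twice in opposite directions. A short case analysis on where in the cyclic walk of length~$6$ the second traversal of $e$ falls produces just two local pictures: either (a) the face boundary is a path of length three walked back and forth (so $f$ wraps around a $3$-edge path, each edge traversed twice), or (b) the boundary consists of a $4$-cycle with a pendant edge attached at a vertex of the cycle. In either case I plan to derive a contradiction from the minimality of $G$: by deleting the pendant vertex and invoking the $L$-coloring of the reduced graph guaranteed by minimality, together with the specific degree/face constraints around $v$, or by matching the picture to a reducible configuration from Lemma~\ref{reducible}. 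Executing this last step is the main technical obstacle, since the pendant vertex has degree $\geq 4$ in $G$ (so is not a literal degree-$1$ vertex), and the reduction has to exploit the fact that all other edges at the pendant exit into faces different from $f$.

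Once (i) and (iv) are in hand, (ii) and (iii) follow from a short inspection of the boundary walk. By (iv) no edge is traversed twice, so all six edges of the length-$6$ walk are distinct; combined with (i) the only remaining possibility is a repetition at cyclic distance three. Writing the walk as $w,x,y,z,u,v$, the distance-three repetitions are precisely $w = z$, $x = u$, or $y = v$. For (iii), if $w \neq z$ then the only remaining options are $x = u$ or $y = v$, so $x$ or $y$ is incident to $f$ twice. For (ii), assume $xz \notin E(G)$: the case $w = z$ forces $xz$ to coincide with the walk edge $wx$ (contradicting $xz \notin E$), and the case $x = u$ forces $xz$ to coincide with the walk edge $zu$ (again contradicting $xz \notin E$), so only $y = v$ remains; the walk is then $w,x,y,z,u,y$, its closing edge $yw$ gives $wy \in E$, and $y$ is incident to $f$ twice.
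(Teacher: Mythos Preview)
Your treatment of (ii) and (iii) in the final paragraph is clean and correct. The gap is in (iv), and it stems from a misconception you flag yourself: you write that ``the pendant vertex has degree $\geq 4$ in $G$ (so is not a literal degree-$1$ vertex).'' In fact it \emph{is} a literal degree-$1$ vertex. If the boundary walk of $f$ reads $\ldots,a,b,a,\ldots$ at some vertex $b$, then the corner of $f$ at $b$ lies between two consecutive edges in the rotation at $b$, and both of those edges must be $ba$; in a simple graph that forces $d_G(b)=1$. So in your case~(b) the pendant endpoint has degree~$1$, in your case~(a) both ends of the $3$-path have degree~$1$, and in the case you omitted (the two traversals of $e$ at cyclic distance~$3$, giving a walk $v,u,w,u,v,z$) both $w$ and $z$ have degree~$1$. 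Each possibility contradicts $\delta(G)\geq 4$ immediately; no minimality or reducibility argument is needed.

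This is exactly how the paper proceeds, just packaged differently: Lemma~\ref{reducible}(v) records that no $6$-face can have a boundary vertex of degree~$1$, and the paper invokes it to get (i) directly (ruling out the distance-$2$ repetition along with the $6$-cycle case), after which the face has the fixed shape $xyzayw$ with six distinct edges and five distinct vertices, making (ii)--(iv) genuinely a one-line check. Your ordering (weak~(i), then (iv), then strengthen to full~(i)) is fine once the degree-$1$ observation is in place, but note that your stated ``two local pictures'' is not an exhaustive case split.
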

\sout{\begin{proof}
$(i)$ follows from Lemma~\ref{reducible} $(v)$. 
It is easy to check $(ii)$, $(iii)$, and $(iv)$. 
\end{proof}
}

\begin{figure}[h]
\centering
\includegraphics{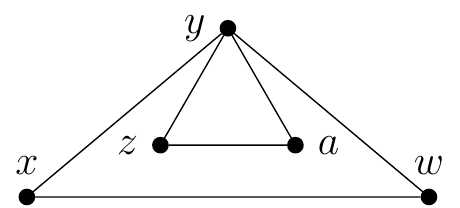}
\caption{A degenerate $6$-face.
}
\label{fig:tikz:degenerate}
\end{figure}

\begin{proposition}\label{adj4faces}
Given a $4$-face $vu_2xu_3$ and $u_1\not\in\{v, u_2, u_3, x\}$, if $u_1vu_2y$ is a $4$-face for some vertex $y$, then $y=u_3$. 
\end{proposition}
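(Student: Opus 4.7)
The plan is to argue by contradiction: suppose $y\neq u_3$ and derive a forbidden $6$-cycle in $G$.

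The first step is to handle the degenerate case $y=x$. In that case, both $4$-faces $vu_2xu_3$ and $u_1vu_2x$ traverse the length-$2$ path $v\,u_2\,x$, so $u_2$ has a corner in each face formed by the same incident edges $u_2v$ and $u_2x$. If the two faces are distinct, then the rotation at $u_2$ would have to place $v$ and $x$ adjacent to each other on both sides of $u_2$, forcing $d(u_2)=2$ and contradicting the minimum-degree bound $\delta(G)\ge 4$. If the two faces coincide, comparing their cyclic orders $(v,u_2,x,u_3)$ and $(u_1,v,u_2,x)$ forces $u_1=u_3$, contradicting the hypothesis $u_1\notin\{v,u_2,u_3,x\}$. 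So $y\neq x$.

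The remaining vertex distinctness is immediate: $y\notin\{u_1,v,u_2\}$ because $u_1vu_2y$ is a $4$-face, $u_1$ is disjoint from $\{v,u_2,u_3,x\}$ by hypothesis, and the four vertices of $vu_2xu_3$ are pairwise distinct. Thus $u_1,v,u_3,x,u_2,y$ are six pairwise distinct vertices. Reading edges off the two face boundaries yields the path $u_1\,v\,u_3\,x\,u_2\,y$ (using $u_1v, vu_3, u_3x, xu_2$ from the first two faces and $u_2y, yu_1$ from the second), which closes to a $6$-cycle $u_1\,v\,u_3\,x\,u_2\,y\,u_1$ in $G$. This contradicts the assumption that $G$ contains no $6$-cycle.

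The only delicate step is the degenerate case $y=x$; it is the only place where the embedding itself (rather than just the cycle structure of $G$) enters the argument, and the minimum-degree bound is exactly what is needed to preclude two distinct $4$-faces from sharing two consecutive edges. Everything else is just reading a $6$-cycle off the boundary walk obtained by gluing the two $4$-faces along the shared edge $vu_2$.
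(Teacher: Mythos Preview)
Your argument is correct and follows essentially the same route as the paper's proof: rule out $y=x$ via $d(u_2)=2<4$, observe $y\notin\{u_1,v,u_2\}$ from the face, and then read off the $6$-cycle $vu_1yu_2xu_3$ (your $u_1vu_3xu_2yu_1$ is the same cycle). Your treatment of the $y=x$ case is slightly more thorough than the paper's---you explicitly dispose of the possibility that the two faces coincide---but the substance is identical.
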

\sout{
\begin{proof}
Note that $y\not\in\{v, u_1, u_2\}$, and if $y=x$, then $d(u_2)=2<4$, which contradicts the minimum degree of $G$. 
Now, $vu_1yu_2xu_3$ is a $6$-cycle, unless $y=u_3$. 
\end{proof}
}

\begin{claim}\label{faces303}
If $f_1, f_2, f_3$ are consecutive faces of a vertex $v$ with $d(f_1)=d(f_3)=3\neq d(f_2)$, then $d(f_2)\geq 6$. 
\end{claim}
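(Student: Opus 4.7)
The plan is to argue by contradiction: assume $d(f_2) \in \{4,5\}$ and produce in each case either a $6$-cycle or a copy of $K_5^-$ in $G$, both of which are forbidden. Label the consecutive neighbors of $v$ as $u_1, u_2, u_3, u_4$ so that $f_1 = vu_1u_2$, $f_2$ sits between the edges $vu_2$ and $vu_3$ in the rotation at $v$, and $f_3 = vu_3u_4$. Since $d(v) \geq 4$, the five vertices $v, u_1, u_2, u_3, u_4$ are pairwise distinct, and the triangular faces $f_1, f_3$ yield the edges $u_1u_2$ and $u_3u_4$.

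The central observation is a rotation argument at $u_2$ (and symmetrically at $u_3$): the corner of $f_1$ at $u_2$ is flanked by the edges $u_2u_1$ and $u_2v$, while the corner of $f_2$ at $u_2$ is flanked by $u_2v$ and $u_2w$, where $w$ is the neighbor of $u_2$ appearing next after $v$ on $f_2$. Thus $u_1, v, w$ are three consecutive neighbors of $u_2$ in its rotation; since $d(u_2) \geq 4$, they are distinct, so $w \neq u_1$. Symmetrically, the neighbor of $u_3$ appearing just before $u_3$ on $f_2$ cannot equal $u_4$. A short check also rules out degenerate $4$- and $5$-faces, since any repeated vertex on such a boundary walk forces a cut edge whose other endpoint has degree $1$, violating the minimum degree of $G$.

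With this in hand, the case $d(f_2) = 4$ is immediate: writing $f_2 = vu_2wu_3$, the rotation argument and simplicity of $f_2$ give $w \notin \{v, u_1, u_2, u_3, u_4\}$, so $vu_1u_2wu_3u_4v$ is a $6$-cycle. For $d(f_2) = 5$, write $f_2 = vu_2w_1w_2u_3$ with $w_1 \neq u_1$ and $w_2 \neq u_4$, and split on whether $w_1 = u_4$ and whether $w_2 = u_1$. In each of the three subcases where at least one of these coincidences fails, a $6$-cycle can be read directly off the boundary of $f_2$ together with the two triangles; for instance $vu_1u_2w_1w_2u_3v$ when neither holds, and $vu_1u_2u_4w_2u_3v$ when $w_1 = u_4$ but $w_2 \neq u_1$.

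The main obstacle I expect is the remaining subcase, $w_1 = u_4$ and $w_2 = u_1$: here the natural walk $vu_1u_2w_1w_2u_3v$ collapses because $u_1 = w_2$ is repeated, so there is no $6$-cycle for free. The resolution is to count edges on $\{v, u_1, u_2, u_3, u_4\}$: the boundary of $f_2$ contributes $u_2u_4, u_4u_1, u_1u_3$; the four edges $vu_1, vu_2, vu_3, vu_4$ come from $v$; and the triangles contribute $u_1u_2, u_3u_4$. These nine edges are exactly all pairs in $\binom{\{v,u_1,u_2,u_3,u_4\}}{2}$ except $u_2u_3$, so $G[\{v,u_1,u_2,u_3,u_4\}]$ contains $K_5^-$, the forbidden substructure. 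This is precisely the point at which the strengthening from forbidding $K_5$ to forbidding $K_5^-$ is used.
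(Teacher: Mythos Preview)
Your proof is correct and follows essentially the same route as the paper's: label the four consecutive neighbors, rule out the coincidences $w=u_1$ or $w=u_4$ in the $4$-face case (and $w_1=u_1$, $w_2=u_4$ in the $5$-face case) via the degree/rotation argument at $u_2$ and $u_3$, read off a $6$-cycle in every subcase except $w_1=u_4,\,w_2=u_1$, and in that last subcase observe that $G[N[v]]\supseteq K_5^-$. Your treatment is in fact slightly more careful than the paper's in explicitly addressing degenerate $5$-faces and the rotation at $u_2,u_3$, but the argument is the same.
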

\sout{
\begin{proof}
Let $u_1, u_2, u_3, u_4$ be neighbors of $v$ in cyclic order so that $f_1$ is $vu_1u_2$ and $f_3$ is $vu_3u_4$. 
If $f_2$ is a $4$-face $u_2vu_3x$, then $vu_1u_2xu_3u_4$ is a $6$-cycle, unless $x\in \{u_1, u_4\}$.
Yet, if $x\in \{u_1, u_4\}$, then either $d(u_2)=2$ or $d(u_3)=2$, which contradicts the minimum degree of $G$. 
If $f_2$ is a $5$-face $u_2vu_3xy$, then $G$ has a $6$-cycle, unless $\{x, y\}= \{u_1, u_4\}$.
If $x=u_4$ or $y=u_1$, then either $d(u_3)=2$ or $d(u_2)=2$. 
Thus, $x=u_1$ and $y=u_4$, which implies $u_1u_3, u_1u_4, u_2u_4\in E(G)$. 
Yet, now $K^-_5\subseteq G[N[v]]$.
\end{proof}
}


\begin{claim}\label{faces3340}
If $f_1, f_2, f_3, f_4$ are consecutive faces of a vertex $v$ with $d(f_1)=d(f_2)=3$ and $d(f_3)=4$, then $d(f_4)\geq 6$. 
\end{claim}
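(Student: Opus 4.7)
Let $u_1,u_2,\ldots$ denote the neighbors of $v$ in cyclic order, indexed so that $f_i$ is the face between $u_i$ and $u_{i+1}$ for each $i\in\{1,2,3,4\}$ (indices mod $d(v)$; so when $d(v)=4$ we have $u_5=u_1$). Thus $f_1=vu_1u_2$, $f_2=vu_2u_3$, $f_3=vu_3xu_4$ for some vertex $x$, and $f_4$ begins $vu_4\cdots u_5$. The plan is to assume $d(f_4)\le 5$ and reach a contradiction in three steps.

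First, the case $d(f_4)=3$ is immediate: then $d(f_2)=d(f_4)=3\ne d(f_3)=4$, and Claim~\ref{faces303} applied at $v$ forces $d(f_3)\ge 6$, a contradiction. Second, I pin down $x$. The closed walk $vu_1u_2u_3xu_4$ uses six edges of $G$, and since $v,u_1,u_2,u_3,u_4$ are pairwise distinct and $x\notin\{v,u_3,u_4\}$, avoiding a $6$-cycle forces $x\in\{u_1,u_2\}$. The choice $x=u_2$ would force $f_2$ and $f_3$ to occupy the same corner at $u_3$ (the one bounded by the edges $u_3v$ and $u_3u_2$), so either $f_2=f_3$ or $d(u_3)\le 2$, both impossible. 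Hence $x=u_1$, which contributes the edges $u_1u_3$ and $u_1u_4$.

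Third, I rule out $d(f_4)\in\{4,5\}$ by the same template. For $d(f_4)=4$, write $f_4=vu_4yu_5$; the walk $vu_3u_1u_4yu_5$ obtained from $f_3\cup f_4$ (minus their shared edge $vu_4$) is a $6$-cycle unless $y\in\{u_1,u_3\}$. The value $y=u_1$ is excluded by the same corner argument applied at $u_4$. Hence $y=u_3$, and then $G[\{v,u_1,u_2,u_3,u_4\}]$ contains every edge of $K_5$ except $u_2u_4$, giving $K_5^-\subseteq G$. For $d(f_4)=5$, write $f_4=vu_4abu_5$; the corner argument at $u_4$ gives $a\ne u_1$, so the analogous $6$-cycle $vu_1u_4abu_5$ forces $b=u_1$, and an auxiliary $6$-cycle $vu_2u_3u_1au_4$ then forces $a\in\{u_2,u_3\}$, each of which produces a $K_5^-$ on $\{v,u_1,u_2,u_3,u_4\}$. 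The degenerate case $d(v)=4$ (so $u_5=u_1$) is handled in parallel: each identification with an existing $u_i$ completes a $K_5^-$ on $N[v]$, and any other choice produces a genuine $6$-cycle of the form $u_1u_2u_3vu_4\cdots$ in $G$.

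The main obstacle is the bookkeeping in the third step: each candidate identification of the new face-vertices $y$ (or $a,b$) with an existing $u_i$ has to be analyzed separately, and one must check that every non-identification traces a non-degenerate $6$-cycle while every identification either yields a corner shared by two distinct faces (ruled out by $\delta(G)\ge 4$) or contributes exactly the edges needed to complete a $K_5^-$ on $\{v,u_1,u_2,u_3,u_4\}$. Once this case analysis is pushed through uniformly in $d(v)$, the contradiction is reached in every subcase.
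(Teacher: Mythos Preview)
Your approach is correct and essentially matches the paper's: both first pin down the fourth vertex of $f_3$ as $u_1$ (yielding the extra edges $u_1u_3$ and $u_1u_4$) and then exclude $d(f_4)\in\{3,4,5\}$ by exhibiting a $6$-cycle or a $K_5^-$ in each subcase. Your invocation of Claim~\ref{faces303} for $d(f_4)=3$ is a tidy shortcut the paper does not take, and you route your $6$-cycles through the diagonal $u_1u_3$ where the paper routes through $u_1u_2$, but these are cosmetic differences; the paper simply writes out in full the identification bookkeeping you flag in your final paragraph.
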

\sout{
\begin{proof}
Let $u_1, u_2, u_3, u_4$ be the neighbors of $v$ in cyclic order so that $f_1$ is $vu_1u_2$, $f_2$ is $vu_2u_3$, and $f_3$ is $u_3vu_4x$ for some $x$.
If $x\not\in \{u_1, u_2\}$, then $u_1u_2u_3xu_4v$ is a $6$-cycle, which is a contradiction.
If $x=u_2$, then $d(u_3)=2$, which is a contradiction.
Therefore, $x=u_1$. 

Note that if either $u_3u_4$ or $u_2u_4$ is an edge, then $K^-_5\subseteq G[N[v]]$. 
Also, $vu_4$ and $u_4u_1$ cannot be consecutive edges on the boundary of $f_4$ since this implies $d(u_4)=2$. 
If $f_4$ is a $3$-face $vu_4x$, then $x\not\in\{u_1, u_2, u_3\}$. 
Yet, $vxu_4u_1u_3u_2$ is a $6$-cycle. 
If $f_4$ is a $4$-face $vu_4xy$, then $x\not\in\{u_1, u_2, u_3\}$.
If $y\not\in \{u_1, u_2, u_3\}$, then $vyxu_4u_1u_2$ is a $6$-cycle. 
If $y=u_1$, then $vu_4xyu_2u_3$ is a $6$-cycle. 
If $y=u_2$, then $u_4xyu_3vu_1$ is a $6$-cycle. 
If $y=u_3$, then $u_4xyvu_2u_1$ is a $6$-cycle. 
If $f_4$ is a $5$-face $vu_4xyz$, then $x, y\not\in\{u_1, u_2, u_3\}$. 
If $z\not\in\{u_1, u_2, u_3\}$, then $u_4xyzvu_1$ is a $6$-cycle. 
If $z=u_1$, then $u_4xyzu_2v$ is a $6$-cycle.
If $z=u_2$, then $u_4xyzvu_1$ is a $6$-cycle.
If $z=u_3$, then $u_4xyzvu_1$ is a $6$-cycle. 
\end{proof}
}

\begin{corollary}\label{faces3303}
If $f_1, f_2, f_3, f_4$ are consecutive faces of a $5$-vertex $v$ with $d(f_1)=d(f_2)=3$ and $d(f_4)=3$, then $d(f_3)\geq 7$. 
\end{corollary}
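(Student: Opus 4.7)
The plan is to apply Claim~\ref{faces303} to the consecutive faces $f_2, f_3, f_4$: since $d(f_2) = d(f_4) = 3$, this yields either $d(f_3) = 3$ or $d(f_3) \geq 6$. I will then separately rule out $d(f_3) = 3$ and $d(f_3) = 6$.

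If $d(f_3) = 3$, label the neighbors of $v$ in cyclic order as $u_1, \ldots, u_5$ so that $f_i = v u_i u_{i+1}$ for $i \in \{1,2,3,4\}$. Then $u_1 u_2$, $u_2 u_3$, $u_3 u_4$, and $u_4 u_5$ are all edges, so together with $v u_1$ and $v u_5$ they form the 6-cycle $v u_1 u_2 u_3 u_4 u_5$, a contradiction.

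The main obstacle is the case $d(f_3) = 6$. By Proposition~\ref{degen6}(i), $f_3$ is degenerate, so write its boundary walk as $v, u_3, w_2, w_3, w_4, u_4$. Proposition~\ref{degen6}(iv) forces the two occurrences of the repeated vertex to sit at cyclic distance exactly $3$ in this walk (otherwise some edge is traversed twice and $f_3$ appears consecutively in a face list), giving three cases: $u_3 = w_4$, $w_2 = u_4$, and $v = w_3$. In the first two, the edge $u_3 u_4$ lies on the boundary walk, so $v u_1 u_2 u_3 u_4 u_5$ is again a 6-cycle.

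In the remaining case $v = w_3$, the walk produces a second corner of $f_3$ at $v$ spanned by $v w_2$ and $v w_4$, forcing $w_2$ and $w_4$ to be cyclically consecutive neighbors of $v$. Excluding $u_3$ (loop) and $u_4$ (would add $u_3 u_4$) from both candidates, we have $\{w_2, w_4\} \subseteq \{u_1, u_2, u_5\}$, and the only cyclically consecutive pairs available are $\{u_1, u_2\}$ and $\{u_5, u_1\}$. The first identifies $f_3$ with $f_1$, impossible since $d(f_1) = 3 \neq 6 = d(f_3)$, so $\{w_2, w_4\} = \{u_5, u_1\}$ and $f_3 = f_5$. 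The two orientations of the boundary walk must then be handled separately: if $w_4 = u_5$, the corner of $f_3$ at $u_4$ is spanned by $\{u_4 v, u_4 u_5\}$, the same pair of edges as the corner of the triangle $f_4$ at $u_4$, forcing $f_3 = f_4$ and another degree contradiction; if $w_4 = u_1$, then $u_3 u_5$ and $u_1 u_4$ are edges (coming from the walk), yielding the 6-cycle $v u_1 u_2 u_3 u_5 u_4$. Each sub-case reaches a contradiction, so $d(f_3) \geq 7$.
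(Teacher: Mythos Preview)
Your proof is correct and arrives at the same endgame as the paper's, though you take a slightly more roundabout path to get there. The paper's proof applies Proposition~\ref{degen6}$(ii)$ directly: with $u_3,v,u_4$ consecutive on $f_3$, if $u_3u_4$ is not an edge then $v$ must be incident to $f_3$ twice (and if $u_3u_4$ is an edge, the $6$-cycle $vu_1u_2u_3u_4u_5$ is immediate, though the paper leaves this implicit). You instead re-derive this from parts $(i)$ and $(iv)$, splitting into three cases for the repeated vertex; your Cases~1 and~2 recover the $u_3u_4$ edge, and your Case~3 is precisely the ``$v$ incident twice'' situation. From that point on, your two sub-cases $w_4=u_5$ and $w_4=u_1$ match the paper's two possible boundary walks $u_3vu_4u_5vu_1$ and $u_3vu_4u_1vu_5$; where the paper concludes $d(u_4)=2$ in the first, you equivalently observe $f_3=f_4$, and both proofs finish the second with a $6$-cycle built from the new diagonals. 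Your write-up is also a bit more careful about the $d(f_3)=3$ case, which the paper's invocation of Claim~\ref{faces303} technically skips over.
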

\sout{\begin{proof}
Let $u_1, u_2, u_3, u_4, u_5$ be the neighbors of $v$ in cyclic order so that $f_1$ is $u_1vu_2$, $f_2$ is $u_2vu_3$, and $f_4$ is $u_4vu_5$.
By Claim~\ref{faces303}, $d(f_3)\geq 6$. 
Assume for the sake of contradiction that $d(f_3)=6$. 
If $u_3u_4$ is not an edge, then by Proposition~\ref{degen6} $(ii)$, $v$ must be incident to $f_3$ twice. 
This implies that $f_3$ is either $u_3vu_4u_5vu_1$ or $u_3vu_4u_1vu_5$. 
In the former, $d(u_4)=2$, and in the latter, $u_3u_5u_4u_1u_2v$ is a $6$-cycle.
\end{proof}
}

\begin{claim}\label{faces4434}
There is no $5$-vertex $v$ with $d(f_1)=d(f_2)=d(f_4)=4$ and $d(f_3)=3$  where $f_1, f_2, f_3, f_4$ are consecutive faces of $v$. 
\end{claim}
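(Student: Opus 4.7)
The plan is to assume for contradiction such a $5$-vertex $v$ exists, label its cyclic neighbors $u_1,\ldots,u_5$ so that $f_i$ is the face between $vu_i$ and $vu_{i+1}$ (indices mod $5$), and write $f_1=vu_1pu_2$, $f_2=vu_2qu_3$, $f_3=vu_3u_4$, $f_4=vu_4ru_5$. The strategy is to use the $6$-cycle ban first to force a tiny list of vertex-identifications, and then in the surviving subcases to exhibit either a $6$-cycle or a $K_5^-$ directly.

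First, I would examine the closed walk $vu_1pu_2qu_3v$ of length $6$, whose edges are all on $f_1,f_2$ or incident to $v$. It is a $6$-cycle unless two of its six vertices coincide, so at least one of $p=u_3$, $q=u_1$, $p=q$ must hold. The case $p=q$ would force both $f_1$ and $f_2$ to meet at $u_2$ across the same pair of edges $u_2v$ and $u_2p$; but then these would have to be the only edges at $u_2$, giving $d(u_2)=2$ and contradicting minimum degree $\geq 4$. Hence $p=u_3$ or $q=u_1$, and in either case $u_1u_3\in E(G)$.

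Next I would apply the same idea to the walk $u_1u_3u_4ru_5vu_1$. Every edge of this walk is already known, so unless $r\in\{u_1,u_3\}$ (recalling $r\notin\{v,u_4,u_5\}$ from $f_4$) this is a $6$-cycle. The case $r=u_3$ makes $f_3=vu_3u_4$ and $f_4=vu_4u_3u_5$ two distinct faces sharing both edges $u_4v$ and $u_4u_3$ at $u_4$, which again forces $d(u_4)=2$. So $r=u_1$, placing $u_1u_4$ and $u_1u_5$ in $E(G)$.

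Finally I would split on the surviving dichotomy. If $p=u_3$, then $u_2u_3$ is also an edge, and $u_2u_3u_4u_1u_5v$ is a $6$-cycle through six distinct vertices. Otherwise $q=u_1$ with $p\neq u_3$, giving $u_1u_2\in E(G)$; here if $p=u_4$ then also $u_2u_4\in E(G)$ and $\{v,u_1,u_2,u_3,u_4\}$ induces a $K_5^-$ (nine edges, only $u_2u_3$ missing), while if $p\neq u_4$ the walk $pu_1u_3u_4vu_2$ is a $6$-cycle. The main obstacle is the bookkeeping to guarantee no intermediate identification is missed; the clean insight making each branch close is that both ``two-faces-sharing-two-edges'' collisions ($p=q$ and $r=u_3$) collapse a vertex to degree $2$, ruling those branches out immediately.
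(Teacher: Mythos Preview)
Your argument is correct. Both proofs proceed by exhausting the possible vertex-identifications that prevent an explicit closed $6$-walk from being a genuine $6$-cycle, and both use the ``two faces sharing two edges at a vertex force degree~$2$'' observation to kill degenerate identifications. The organization, however, differs. The paper branches immediately on the location of $y$ (your $q$), treating the three cases $y\notin\{u_1,u_5\}$, $y=u_1$, $y=u_5$ separately and invoking the earlier Proposition on adjacent $4$-faces inside the first branch. You instead first extract the common consequence $u_1u_3\in E(G)$ from the $f_1$--$f_2$ walk, then feed that edge into a second walk through $f_3$ and $f_4$ to force $r=u_1$ (hence $u_1u_4,u_1u_5\in E(G)$), and only afterward split on $p=u_3$ versus $q=u_1$. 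Your route is self-contained (no appeal to the adjacent-$4$-faces proposition) and a bit more economical, since establishing the shared edges up front means each terminal branch closes in a single line; the paper's version trades that for a flatter case tree keyed to a single variable.
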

\sout{
\begin{proof}
Let $u_1, u_2, u_3, u_4, u_5$ be the neighbors of $v$ in cyclic order so that $f_1$ is $u_1vu_2x$, $f_2$ is $u_2vu_3y$, and $f_3$ is $u_3vu_4$, and $f_4$ is $u_4vu_5z$ for some $x, y, z$. 
Note that $y\neq u_4$ since otherwise $d(u_3)=2$, and $z\neq u_3$ since otherwise $d(u_4)=2$. 

Assume $y\not\in \{u_1, u_5\}$. 
By considering $f_1$ and $f_2$ and Proposition~\ref{adj4faces}, $x=u_3$. 
If $z\not\in\{u_1, u_3\}$, then $u_4zu_5vu_1u_3$ is a $6$-cycle. 
Thus, $z=u_1$. 
Yet, now $u_4u_1vu_2yu_3$ is a $6$-cycle. 

Assume $y=u_1$.
If $z\not\in\{u_1, u_3\}$, then $u_4zu_5vu_1u_3$ is a $6$-cycle. 
Thus, $z=u_1$. 
If $x\not\in\{u_3, u_4\}$, then $u_4u_1xu_2vu_3$ is a $6$-cycle. 
Yet, if $x=u_3$, then $u_4vu_5u_1u_2u_3$ is a $6$-cycle, and if $x=u_4$, then $u_4u_3vu_5u_1u_2$ is a $6$-cycle. 

Assume $y=u_5$.
If $z\not\in\{u_2, u_3\}$, then $u_4zu_5u_2vu_3$ is a $6$-cycle.
Thus, $z=u_2$. 
If $x\not\in\{u_3, u_4\}$, then $u_1xu_2u_4u_3v$ is a $6$-cycle.
Yet, if $x=u_3$, then $u_1u_3u_4u_2u_5v$ is a $6$-cycle, and if $x=u_4$, then $u_1u_4u_3u_5u_2v$ is a $6$-cycle.
\end{proof}
}

\begin{figure}[h]
\centering
\includegraphics{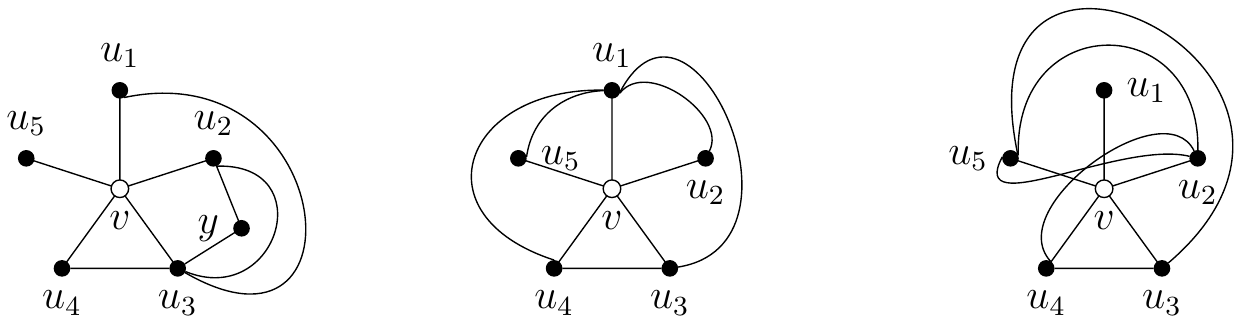}
  \caption{Cases for Claim~\ref{faces4434}.
}\label{fig:tikz:fig-faces4434}
\end{figure}

\begin{claim}\label{faces44444}
There is no $5$-vertex $v$ that is incident to only $4$-faces. 
\end{claim}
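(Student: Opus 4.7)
The plan is to assume such a $5$-vertex $v$ exists, denote its neighbors in cyclic order by $u_1, \ldots, u_5$, and write $f_i = u_ivu_{i+1}x_i$ for the $4$-face between the edges $vu_i$ and $vu_{i+1}$ (indices mod $5$). The goal is to pin down the five fourth vertices $x_i$ enough to force a copy of $K_5^-$ inside $N[v]$, yielding a contradiction.

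For each consecutive pair $(f_i, f_{i+1})$ sharing the edge $vu_{i+1}$, I would apply Proposition~\ref{adj4faces} to the $4$-face $f_i$ with $u_{i+2}$ in the role of the outside vertex $u_1$ of that proposition. The exclusions $u_{i+2} \neq v, u_{i+1}, u_i$ are automatic (since $u_1, \ldots, u_5$ are distinct neighbors of $v$), so the only way the hypothesis can fail is $u_{i+2} = x_i$. This yields the dichotomy: for each $i$, either $x_i = u_{i+2}$ (call this $R_i$) or $x_{i+1} = u_i$ (call this $L_i$).

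The key observation is that $L_i$ and $R_{i+1}$ cannot simultaneously hold, because they would force $x_{i+1} = u_i$ and $x_{i+1} = u_{i+3}$, whereas $u_i \neq u_{i+3}$. Combined with the dichotomy $L_{i+1} \lor R_{i+1}$, this yields the implication $L_i \Rightarrow L_{i+1}$. Iterating cyclically around the five pairs, either every $L_i$ holds or every $R_i$ holds.

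In the all-$L$ case, $x_{i+1} = u_i$ for every $i$, so the $4$-face $f_{i+1} = u_{i+1}vu_{i+2}u_i$ contributes the cyclic edge $u_iu_{i+1}$ and the second-neighbor edge $u_iu_{i+2}$. Letting $i$ range over $\{1, \ldots, 5\}$, all five cyclic edges $u_ju_{j+1}$ and all five second-neighbor edges $u_ju_{j+2}$ appear, so $G[\{u_1, \ldots, u_5\}] = K_5$ and in particular $K_5^- \subseteq G$, a contradiction. The all-$R$ case is entirely symmetric. The main obstacle is formulating the cyclic parity argument (the forbidden $L_iR_{i+1}$ transition) cleanly so that global uniformity of the five choices is forced; once this is in hand, the $K_5$ on $N(v)$ falls out directly from the boundaries of the $4$-faces.
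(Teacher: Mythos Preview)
Your proof is correct, and it takes a genuinely different route from the paper's.

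The paper fixes one face $u_4vu_5x$ and splits into cases according to whether $x$ lies in $N(v)$ (and if so, which neighbor it is). In each case it applies Proposition~\ref{adj4faces} once or twice along nearby edges to extract two or three extra edges among the $u_i$, and then exhibits an explicit $6$-cycle using those edges together with $x$. So the paper's contradictions all come from the forbidden $6$-cycle, and the argument is a short but somewhat ad hoc case analysis.

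Your argument instead applies Proposition~\ref{adj4faces} uniformly to all five consecutive pairs $(f_i,f_{i+1})$ to obtain the dichotomies $R_i\lor L_i$, and then uses the single incompatibility $L_i\Rightarrow\lnot R_{i+1}$ to propagate around the cycle and force all five choices to agree. This collapses the problem without any case split and lands on $G[N(v)]=K_5$, so the contradiction comes from the forbidden $K_5^-$ rather than from a $6$-cycle. The payoff is a cleaner, symmetry-respecting argument with no residual casework; the paper's version, by contrast, stays closer to the ``find a $6$-cycle'' template used in the neighboring claims and needs no global propagation step. Both are valid; yours is the more structural of the two.
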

\sout{
\begin{proof}
Let $u_1, u_2, u_3, u_4, u_5$ be the neighbors of $v$ in cyclic order so that $u_4vu_5x$ is a $4$-face for some $x$. 

Assume $x\not\in \{u_1, u_2, u_3\}$. 
By considering the two $4$-faces adjacent to $vu_4$ and Proposition~\ref{adj4faces}, $u_3u_5, u_4u_5\in E(G)$. 
By considering the two $4$-faces adjacent to $vu_5$ and Proposition~\ref{adj4faces}, $u_1u_4\in E(G)$. 
Now, $u_1u_4xu_5u_3v$ is a $6$-cycle.

Assume $x=u_2$. 
By considering the two $4$-faces adjacent to $vu_5$ and Proposition~\ref{adj4faces}, $u_4u_5, u_4u_1\in E(G)$. 
By considering the two $4$-faces adjacent to $vu_4$ and Proposition~\ref{adj4faces}, $u_3u_5\in E(G)$. 
Now, $vu_1u_4u_2u_5u_3$ is a $6$-cycle. 

The only cases left are $x\in\{u_3, u_1\}$.
Without loss of generality, assume $x=u_3$.
By considering the two $4$-faces adjacent to $vu_5$ and Proposition~\ref{adj4faces}, $u_4u_5, u_4u_1\in E(G)$. 
By considering the two $4$-faces adjacent to $vu_1$ and Proposition~\ref{adj4faces}, $u_5u_1, u_5u_2\in E(G)$. 
Yet, $vu_2u_5u_1u_4u_3$ is a $6$-cycle. 
\end{proof}
}

A $4$-vertex $v$ is {\it special} if $v$ is incident to a $4$-face and exactly two $3$-faces.

\begin{corollary}
The two $3$-faces incident to a special vertex $v$ must be adjacent to each other. 
\end{corollary}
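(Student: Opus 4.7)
The plan is a short proof by contradiction that falls straight out of Claim~\ref{faces303}. Since $v$ is special, $v$ has degree $4$, so its incident face slots in the cyclic rotation around $v$ are exactly four; call them $f_1, f_2, f_3, f_4$ in cyclic order. By the definition of special, exactly two of these slots are occupied by $3$-faces, one is occupied by a $4$-face, and the remaining slot is occupied by some face that is not a $3$-face. The goal is to rule out the configuration where the two $3$-face slots are non-adjacent in this cyclic order.

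Assume for contradiction that the two $3$-face slots are non-adjacent. By relabeling cyclically, we may assume $d(f_1) = d(f_3) = 3$, with $d(f_2), d(f_4) \neq 3$, and exactly one of $f_2, f_4$ being the $4$-face guaranteed by specialness. The key step is then a symmetric double application of Claim~\ref{faces303}: applied to the consecutive triple $f_1, f_2, f_3$ at $v$ it yields $d(f_2) \geq 6$, and applied to the triple $f_3, f_4, f_1$ (which is also a consecutive triple at $v$) it yields $d(f_4) \geq 6$. This contradicts the hypothesis that one of $f_2, f_4$ has degree $4$, so the two $3$-faces at $v$ must in fact be consecutive around $v$.

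I do not anticipate any serious obstacle here; the corollary is essentially a two-line symmetric deployment of Claim~\ref{faces303}. The only minor subtlety worth noting is that the four $f_i$ are face incidences at $v$ rather than necessarily distinct faces in the embedding (a single face could in principle occupy two slots in the rotation at $v$), but this causes no issue because Claim~\ref{faces303} is stated in terms of consecutive faces around a vertex and depends only on the degrees at those slots.
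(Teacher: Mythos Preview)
Your argument is correct and is exactly the paper's approach: assuming the two $3$-faces are non-adjacent, a (double) application of Claim~\ref{faces303} forces both remaining faces to have degree at least $6$, contradicting the existence of the required $4$-face. The paper just phrases this as ``Claim~\ref{faces303} says no $4$-face is incident to $v$'' without writing out the two applications separately.
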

\sout{
\begin{proof}
If the two $3$-faces are nonadjacent, then Claim~\ref{faces303} says no $4$-face is incident to $v$. 
\end{proof}
}

\begin{figure}[h]
\centering
\includegraphics{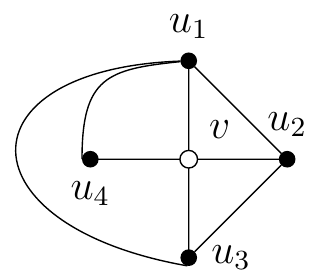}
  \caption{A special vertex.
}\label{fig:tikz:fig-special}
\end{figure}

\begin{claim}\label{4face-oneSpecial}
Each $4$-face is incident to at most one special vertex.
\end{claim}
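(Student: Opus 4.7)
Suppose for contradiction that a $4$-face $f$ of $G$ is incident to two distinct special vertices $v_1, v_2$. I consider two geometric cases: either $v_1, v_2$ lie on a common edge of $f$ (Case 1), or they are opposite corners of $f$ (Case 2). In each case, the plan is to produce either a $6$-cycle or a $K_5^-$ in $G$, contradicting the choice of $G$.

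In Case 1, write $f = v_1 v_2 p q$ and label the cyclic neighborhoods $N(v_1) = \{v_2, x_1, y_1, q\}$ and $N(v_2) = \{v_1, x_2, y_2, p\}$. By the preceding corollary, the two $3$-faces at each $v_i$ are consecutive; by Claim~\ref{faces3340}, whenever $f$ abuts a $3$-face at $v_i$, the remaining face at $v_i$ has degree at least $6$. Because the face sitting across edge $v_1 v_2$ from $f$ is the same face regardless of endpoint, the positions of the $3$-face pairs at $v_1$ and $v_2$ must match, leaving exactly two scenarios. In the first, this shared face is a common $3$-face $v_1 v_2 x$ with $x = x_1 = x_2$ and the remaining $3$-faces are $v_1 x y_1$ and $v_2 x y_2$; the vertex sequence $v_1, q, p, v_2, x, y_1$ forms a $6$-cycle unless $y_1 = p$, and the symmetric cycle through $v_2$ forces $y_2 = q$ as well. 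But then $v_1, v_2$ are each adjacent to every vertex of $f$, and together with the $3$-face edges $xp, xq$ and the $f$-edge $pq$, the set $\{v_1, v_2, x, p, q\}$ induces $K_5 \supseteq K_5^-$. In the second scenario, the shared face has degree $\geq 6$, the $3$-faces at $v_i$ lie on the side away from $v_1 v_2$, and the cycle $v_1, v_2, y_2, p, q, y_1$ is simple unless $y_1 = y_2$; setting $y := y_1 = y_2$, the backup cycle $v_1, x_1, y, q, p, v_2$ works unless $x_1 = p$, and by symmetry $v_2, x_2, y, p, q, v_1$ works unless $x_2 = q$. In the remaining case where both $x_1 = p$ and $x_2 = q$ hold, the diagonals $v_1 p$ and $v_2 q$ are edges, and the same ten-edge accumulation completes $G[\{v_1, v_2, p, y, q\}]$ to $K_5$.

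In Case 2, write $f = v_1 u_1 v_2 u_2$ and label the off-$f$ neighbors of $v_i$ as $a_i, b_i$ in cyclic order. The corollary and Claim~\ref{faces3340} leave only two positions for the adjacent $3$-face pair at each $v_i$ (either near $u_1$ or near $u_2$), giving four combinations to check. In each combination, I seek a $6$-cycle that uses the $3$-face edges of both special vertices together with one boundary side of $f$; the model example, for the combination with $3$-faces $v_1 u_1 a_1, v_1 a_1 b_1$ at $v_1$ and $v_2 u_1 a_2, v_2 a_2 b_2$ at $v_2$, is $v_1, u_2, v_2, a_2, u_1, a_1$. Such a cycle is simple unless an off-$f$ neighbor $a_i$ or $b_i$ coincides with a vertex of $f$ or with a neighbor of the other special vertex. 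Each such identification forces $v_1 v_2$ (or an analogous diagonal of $f$) to be an edge, which in turn forces the $3$-face structure to straddle that diagonal, yielding a common third vertex $b$ shared by two $3$-faces through $v_1 v_2$. When every candidate cycle is obstructed simultaneously, combining the forced diagonals, the $3$-face edges, and, as needed, the degeneracy of the large face through Proposition~\ref{degen6} produces $K_5^-$ inside $G[\{v_1, v_2, u_1, u_2, b\}]$.

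The principal obstacle is not any single step but the bookkeeping required to verify that every configuration which obstructs all candidate $6$-cycles carries enough forced edges to realize $K_5^-$ in a five-vertex set. The technique is entirely parallel to the proofs of Claims~\ref{faces303} through~\ref{faces44444}: iterate through the candidate cycles, list the few coincidences that can prevent simplicity, and show that the conjunction of those coincidences accumulates the required edges.
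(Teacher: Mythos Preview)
Your adjacent/opposite split is plausible in outline, but as written it is not a proof: Case~2 is only a plan (``four combinations to check'', one model cycle, and an unverified assertion that the conjunction of obstructions accumulates to $K_5^-$), and even Case~1 has unjustified steps---for instance, in scenario~2 you claim $v_1,v_2,y_2,p,q,y_1$ is simple ``unless $y_1=y_2$'', but you must also dispose of $y_1=p$ and $y_2=q$, which you never do (they do fail, via $d(q)=2$ and $d(p)=2$, but that argument is absent).

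The paper sidesteps nearly all of this with one preliminary observation. Fixing a single special vertex $v$ with neighbors $u_1,u_2,u_3,u_4$ in cyclic order, $3$-faces $vu_1u_2,\,vu_2u_3$, and $4$-face $u_3vu_4x$, it notes that $x\notin\{u_1,u_2\}$ yields the $6$-cycle $u_1u_2u_3xu_4v$ while $x=u_2$ yields $d(u_3)=2$; hence $x=u_1$. Thus the $4$-face is exactly $u_3vu_4u_1$: its fourth vertex already lies in $N(v)$, and $vu_1$ is a chord of $f$. With this structure pinned down, the paper simply checks, in two or three lines apiece, that none of the other three boundary vertices $u_1,u_3,u_4$ can be special. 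Had you established this lemma first, your bookkeeping would largely evaporate: in your Case~1, scenario~1 it forces $y_1=p$ and $y_2=q$ at once, in scenario~2 it forces $x_1=p$ and $x_2=q$, and in your Case~2 the vertex opposite $v_1$ on $f$ is automatically a neighbor of $v_1$, so $v_1v_2$ is an edge from the start and the ``four combinations'' collapse.
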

\sout{
\begin{proof}
Let $u_1, u_2, u_3, u_4$ be the neighbors of a special vertex $v$ in cyclic order so that $vu_1u_2$ and $vu_2u_3$ are the two $3$-faces incident to $v$, and $u_3vu_4x$ is a $4$-face for some $x$.
If $x\not\in \{u_1, u_2\}$, then $u_1u_2u_3xu_4v$ is a $6$-cycle.
If $x=u_2$, then $d(u_3)=2$.
Therefore, $x=u_1$. 

Note that if either $u_3u_4$ or $u_2u_4$ is an edge, then $K^-_5\subseteq G[N[v]]$. 
If $u_1$ is a special vertex, then $u_1u_2x$ must be a $3$-face for some $x\in \{u_3, u_4\}$, otherwise $u_1xu_2u_3vu_4$ is a $6$-cycle. 
Since $x=u_4$ creates a $K^-_5$, it must be that $x=u_3$, but this implies that $d(u_2)=3$.
If $u_3$ is a special vertex, then $u_2u_3x$ must be a $3$-face for some $x\in \{u_1, u_4\}$, otherwise $u_2xu_3u_1u_4v$ is a $6$-cycle.
Since $x=u_4$ creates a $K^-_5$, it must be that $x=u_1$, but this implies that $d(u_2)=3$.
If $u_4$ is a special vertex, then since $vu_4u_1$ cannot be a $3$-face, it must be that $u_4u_1x$ is a $3$-face for some $x\in \{u_2, u_3\}$, otherwise $u_1xu_4vu_3u_2$ is a $6$-cycle. 
Yet either choice of $x$ creates a $K^-_5$. 
Hence none of $u_1, u_3, u_4$ can be a special vertex, and thus there is only at most special vertex.
\end{proof}
}

\begin{claim}\label{faces3454}
If $f_1, f_2, f_3, f_4$ are consecutive faces of a $4$-vertex $v$ with $d(f_1)=3$, $d(f_2)=d(f_4)=4$ and $d(f_3)\geq 5$, then neither $f_2$ nor $f_4$ is incident to a special vertex.
\end{claim}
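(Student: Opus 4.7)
Assume for contradiction that $f_2$ is incident to a special vertex. Label the neighbors of $v$ as $u_1, u_2, u_3, u_4$ in cyclic order so that $f_1 = v u_1 u_2$, $f_2 = v u_2 x u_3$, $f_4 = v u_4 y u_1$, and $f_3$ has boundary edges $v u_3$ and $v u_4$. The configuration admits a reflective symmetry swapping $u_1 \leftrightarrow u_2$, $u_3 \leftrightarrow u_4$, and $f_2 \leftrightarrow f_4$, so it suffices to treat $f_2$. Since among $f_1, f_2, f_3, f_4$ only $f_1$ is a $3$-face, $v$ has only one $3$-face and hence is not special; so I must rule out each of $u_2$, $u_3$, $x$ being special.

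The general template is to apply the corollary: at a special vertex the two $3$-faces share an edge. In Case~$1$ ($u_2$ special), the second $3$-face cannot share $u_2 v$ with $f_1$ (that edge also borders $f_2$, a $4$-face), so it must share $u_2 u_1$ and equal $u_1 u_2 a$ for some common neighbor $a$ of $u_1, u_2$ with $a \neq v$; the cycle $u_3\,v\,u_1\,a\,u_2\,x\,u_3$ is then a $6$-cycle in the generic case. In Case~$2$ ($u_3$ special), the two $3$-faces at $u_3$ must occupy its two non-$f_2$, non-$f_3$ face-slots, so they are $u_3 x z_2$ and $u_3 z_2 z_1$ for the remaining neighbors $z_1, z_2$ of $u_3$; this forces the new edge $x z_2$ and the $6$-cycle $u_1\,u_2\,x\,z_2\,u_3\,v\,u_1$. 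In Case~$3$ ($x$ special), the two $3$-faces at $x$ share one of its non-$f_2$ edges and use its two other neighbors $p, q$; either orientation yields a $6$-cycle of the form $u_1\,v\,u_3\,p\,x\,u_2\,u_1$ or $u_1\,u_2\,q\,x\,u_3\,v\,u_1$.

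The main obstacle, as in the preceding claims, is the degenerate coincidences where a new vertex ($a$ in Case~$1$; $z_1, z_2$ in Case~$2$; $p, q$ in Case~$3$) coincides with one of $u_1, u_3, u_4, x, y$. First, $x \neq u_1$ and $y \neq u_2$ since otherwise $f_2$ (respectively $f_4$) and $f_1$ would share two edges at a vertex, impossible in a $2$-cell embedding of a simple graph. For the remaining coincidences, one either (i) finds a vertex of degree less than $4$, contradicting the minimum-degree condition; (ii) accumulates enough diagonals among $\{u_1 u_3, u_2 u_3, u_2 u_4, u_3 u_4, u_1 u_4\}$ in $G[N[v]]$ to produce $K^-_5$ on $N[v]$ (for example, $a = u_3$ together with $x = u_4$ in Case~$1$ gives all four of $u_1 u_3, u_2 u_3, u_2 u_4, u_3 u_4$, yielding $K^-_5$); or (iii) reroutes the $6$-cycle through the vertex $y$ of $f_4$, using the edges $v u_4, u_4 y, y u_1$ in place of $v u_1$. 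Dispatching these finitely many coincidences case-by-case is the bulk of the work, but each one follows the same pattern as in Claims~\ref{faces4434} and~\ref{faces44444}.
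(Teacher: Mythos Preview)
Your overall plan---show that a special vertex on $f_2$ forces either a $6$-cycle or a $K_5^-$---is exactly the paper's strategy, and your generic $6$-cycles in Cases~1--3 are correct. However, what you have written is a sketch, not a proof: you explicitly defer the ``bulk of the work'' (the degenerate coincidences) to the reader, and this is precisely where the argument is delicate.

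The paper organizes the proof differently, and that organization is the missing idea. Before looking at any candidate special vertex, the paper first constrains $x$ and $y$ jointly: from $x\neq u_1$, $y\neq u_2$ (degree), and the observation that $x=u_4$ with $y\notin N[v]$ (or symmetrically $y=u_3$ with $x\notin N[v]$) already gives a $6$-cycle, it deduces that either $x=u_4,\ y=u_3$, or $x,y\notin N[v]$. Only then does it check each vertex of $f_2$ for specialness, splitting the second case further into $x=y$ and $x\neq y$. This preliminary dichotomy is what makes the coincidence analysis short: in the $x=u_4,\ y=u_3$ case, $u_3$ and $u_4$ are immediately non-special because each lies on two $4$-faces; in the $x=y$ case, $x$ is non-special for the same reason, and $z=u_3$ in your Case~1 gives $K_5^-$ on $\{v,u_1,u_2,u_3,x\}$ directly; in the $x\neq y$ case, the two coincidence constraints on $z$ have empty intersection.

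Without pinning down $x,y$ first, your ``each one follows the same pattern'' is too optimistic. For instance, in your Case~1 with $a=u_3$, you only obtain a $K_4$ on $\{v,u_1,u_2,u_3\}$; to reach $K_5^-$ or a $6$-cycle you must invoke the position of $x$ or of $y$ on $f_4$, and the argument bifurcates according to whether $x\in N[v]$, whether $x=y$, etc.---exactly the case split the paper does up front. So the approach is not wrong, but the proof is genuinely unfinished, and adding the paper's preliminary reduction on $x,y$ is the cleanest way to complete it.
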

\sout{
\begin{proof}
Let $u_1, u_2, u_3, u_4$ be the neighbors of $v$ in cyclic order so that $vu_1u_2$ is the $3$-face $f_1$ incident to $v$. 
Let the $4$-face $f_2$ be $u_2vu_3x$ and let the other $4$-face $f_4$ be $u_1vu_4y$. 
If $x=u_1$, then $d(u_2)=2$, which is a contradiction.
If $y=u_2$, then $d(u_1)=2$.
If $x=u_4$ and $y\not\in N[v]$, then $u_1u_2vu_3u_4y$ is a $6$-cycle.
If $y=u_3$ and $x\not\in N[v]$, then $u_2u_1vu_4u_3x$ is a $6$-cycle.
So either $x=u_4$ and $y=u_3$ or $x, y\not\in N[v]$. 
Note that $v$ cannot be a special vertex since it is incident to two $4$-faces. 

Assume $x=u_4$ and $y=u_3$. 
If $u_2$ is a special vertex, then $u_1u_2z$ must be a $3$-face for some $z\neq v$.
If $z\not\in\{u_3, u_4\}$, then $zu_1vu_3u_4u_2$ is a $6$-cycle. 
If $z=u_3$, then $K^-_5\subseteq G[N[v]]$. 
If $z=u_4$, then $d(u_2)=3$. 
Note that $u_3, u_4$ are not special vertices since each is incident to two $4$-faces. 
Therefore, the $f_2$ is not incident to a special vertex, and by similar logic, $f_4$ is not incident to a special vertex. 

Assume $x, y\not\in N[v]$.
If $x=y$, then $x$ cannot be a special vertex since it is incident to two $4$-faces. 
Without loss of generality, assume $u_1u_2z$ is a $3$-face for some $z\neq v$. 
If $z\not\in\{x, u_3\}$, then $zu_1vu_3xu_2$ is a $6$-cycle. 
If $z=x$, then $d(u_2)=3$.
If $z=u_3$, then $K^-_5\subseteq G[N[v]\cup\{x\}]$.
Since $u_1u_2z$ cannot be a $3$-face, it follows that both $u_2$ and $u_1$ cannot be special vertices. 
If $u_3$ is a special vertex, then $u_3xz$ must be a $3$-face for some $z\neq v$. 
If $z\not\in\{u_1, u_2\}$, then $u_1u_2xzu_3v$ is a $6$-cycle. 
If $z=u_1$, then $u_1u_2xu_4vu_3$ is a $6$-cycle. 
If $z=u_2$, then $u_2u_1vu_4xu_3$ is a $6$-cycle. 
Therefore, neither $f_2$ nor $f_4$ is incident to a special vertex.

If $x\neq y$, then both $u_1, u_2$ cannot be special vertices since $u_1u_2z$ cannot be a $3$-face for some $z\neq v$; this is because if $z\not\in\{x, u_3\}$ then $vu_1zu_2xu_3$ is a $6$-cycle, and if $z\not\in\{y, u_4\}$ then $u_1zu_2vu_4y$ is a $6$-cycle. 
If $xu_2z$ is a $3$-face for some $z$, then $z\in\{v, u_1, u_3\}$, otherwise $zu_2u_1vu_3x$ is a $6$-cycle. 
If $z=u_1$, then $d(u_2)=3$, and if $z=u_3$ then $d(x)=2$. 
If $z=v$, then $zxu_2u_1yu_4$ is a $6$-cycle. 
If $xu_3z$ is a $3$-face for some $z$, then $z\in\{u_1, u_2, v\}$, otherwise, $u_1u_2xzu_3v$ is a $6$-cycle. 
If $z\in\{v, u_2\}$, then either $d(u_3)=2$ or $d(u_2)=3$. 
If $z=u_1$, then $u_1yu_4vu_3x$ is a $6$-cycle. 
Therefore, $f_2$ is not incident to a special vertex, and by similar logic, $f_4$ is also not incident to a special vertex. 
\end{proof}
}

%

\begin{claim}\label{4vertex-304}
If $f_1, f_2, f_3, f_4$ are consecutive faces of a non-special $4$-vertex $v$ where $d(f_1)=3$ and $d(f_3)=4$, then one of the following holds:
\begin{enumerate}[$(i)$]
\item $d(f_i)\geq 6$ and $d(f_j)\geq 5$ where $\{i, j\}=\{2, 4\}$;
\item $d(f_i)\geq 6$ and $d(f_j)=4$ and $f_3$ is not incident to a special vertex where $\{i, j\}=\{2, 4\}$.
\end{enumerate}
\end{claim}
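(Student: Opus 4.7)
The plan is to label the neighbors of $v$ in cyclic order as $u_1,u_2,u_3,u_4$ so that $f_1=vu_1u_2$ is the given $3$-face, $f_2$ lies between $vu_2$ and $vu_3$, $f_3=vu_3wu_4$ is the $4$-face (with $w$ the vertex of $f_3$ opposite $v$), and $f_4$ lies between $vu_4$ and $vu_1$. First I would establish $d(f_2),d(f_4)\ge 4$: if $d(f_2)=3$, then $v$ is a non-special $4$-vertex incident to two $3$-faces ($f_1,f_2$) and the $4$-face $f_3$, so non-speciality forces $d(f_4)=3$; Claim~\ref{faces3340} applied to $f_1,f_2,f_3,f_4$ then yields $d(f_4)\ge 6$, a contradiction. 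A symmetric argument rules out $d(f_4)=3$.

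To prove that at least one of $d(f_2),d(f_4)$ is at least $6$, I would assume for contradiction that both lie in $\{4,5\}$ and run a case analysis on the pair $(d(f_2),d(f_4))$. Whenever one of $f_2,f_4$ is a $4$-face, I write $f_2=vu_2yu_3$ or $f_4=vu_4zu_1$ and apply Proposition~\ref{adj4faces} to the adjacent $4$-face pair $(f_2,f_3)$ or $(f_3,f_4)$; the conclusion is that either the fourth vertex $y$ equals $u_4$ (respectively $z=u_3$) or $u_2=w$ (respectively $u_1=w$). Following the resulting identifications, each configuration either packs nine edges into $G[N[v]]$ and yields a $K_5^-$, or leaves enough room for a $6$-cycle to run through $f_3$'s opposite vertex $w$: for example, in the $(4,4)$ subcase with $y=u_4$, $z=u_3$, and $w\notin N[v]$, the cycle $vu_2u_4wu_3u_1$ is a $6$-cycle. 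For the $(4,5)$ and $(5,5)$ subcases, I would expand each $5$-face as $vu_ix_1x_2u_j$ and locate a $6$-cycle that threads through the $5$-face, $f_3$, and one of $u_1u_2$, $u_2u_4$, $u_3u_4$, using the structural identifications forced by Proposition~\ref{adj4faces}.

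For the remaining part of the statement, assume without loss of generality that $d(f_2)=4$ and $d(f_4)\ge 6$, and assume for contradiction that $f_3=vu_3wu_4$ is incident to a special vertex. Since $v$ is incident to only one $3$-face, it is not special, so the candidate is $u_3$, $u_4$, or $w$. In each case, the definition of special and the Corollary preceding Claim~\ref{4face-oneSpecial} yield two adjacent $3$-faces at the candidate vertex, and these $3$-faces must lie outside $f_2,f_3,f_4$, introducing new edges between $w$, $y$ (the fourth vertex of $f_2$), and a new neighbor $\alpha$ of the candidate. In each configuration I would either close a $6$-cycle such as $vu_2y\alpha wu_4$ (when identifications keep the vertices distinct) or produce $K_5^-$ inside $N[v]\cup\{\alpha\}$ when an identification collapses $\alpha$ or $w$ into $N[v]$. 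A short sub-argument via Claim~\ref{4face-oneSpecial} helps rule out $u_3$, since $u_3$ is already on two $4$-faces $f_2, f_3$ and the remaining two faces at $u_3$ would have to be the adjacent $3$-faces.

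The main obstacle will be the sheer volume of case analysis in the middle step: every coincidence among $\{y,z,w,x_1,x_2\}$ and the $u_i$ perturbs which edges are present in a small neighborhood and hence which forbidden configuration appears. The recipe I would follow is to first check whether the edges inside $G[N[v]]$ (possibly with $w$ adjoined) already reach nine, yielding $K_5^-$, and only otherwise seek a $6$-cycle using the ``off-$N[v]$'' vertices of $f_2, f_3, f_4$. This parallels the style of the proofs of Claims~\ref{faces3454} and~\ref{4face-oneSpecial}.
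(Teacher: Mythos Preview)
Your plan is workable, but the paper's proof is organized around a different and more economical case split: instead of branching on $(d(f_2),d(f_4))$, it branches on whether the fourth vertex $w$ of the $4$-face $f_3=u_4vu_3w$ lies in $\{u_1,u_2\}$.

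If $w\notin\{u_1,u_2\}$, the paper shows directly that $d(f_2)\ge 6$ by ruling out lengths $3,4,5$ (each would produce a $6$-cycle passing through $w$), and by the reflection symmetry $d(f_4)\ge 6$ as well; so $(i)$ holds outright. If $w=u_2$ (without loss of generality), then $f_3$ has vertex set $\{v,u_2,u_3,u_4\}$; one checks that $f_2$ is not a $3$-face (else $d(u_3)=2$), whence non-speciality forbids $d(f_4)=3$, and then that $f_4$ is not a $4$- or $5$-face, forcing $d(f_4)\ge 6$. If in addition $d(f_2)=4$, then $u_2$ and $u_3$ each lie on the two $4$-faces $f_2,f_3$; since a special vertex has face pattern $3,3,4,6^+$ (by Claim~\ref{faces3340}) and hence lies on exactly one $4$-face, $(ii)$ follows with essentially no further casework.

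Your route---first bounding $d(f_2),d(f_4)\ge 4$, then eliminating the pairs $(d(f_2),d(f_4))\in\{4,5\}^2$, then separately analyzing which vertex of $f_3$ could be special---would rediscover the same $6$-cycles and $K_5^-$ configurations, but because $w$ is not pinned down you end up re-splitting on whether $w\in\{u_1,u_2\}$ inside almost every subcase (this is exactly the ``sheer volume of case analysis'' you anticipate). Leading with that split is what collapses the paper's argument to a few lines. One minor correction: your appeal to Claim~\ref{4face-oneSpecial} to rule out $u_3$ is not the right reference; the relevant fact is simply that a special vertex is incident to exactly one $4$-face, so a vertex on two $4$-faces cannot be special.
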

\sout{
\begin{proof}
Let $u_1, u_2, u_3, u_4$ be the neighbors of $v$ in cyclic order so that $f_1$ is $vu_1u_2$ and $f_3$ is $u_4vu_3x$.
Assume $x\not\in\{u_1, u_2\}$. 
Consider the face $f_2$. 
If $f_2$ is a $3$-face, then $u_1u_2u_3xu_4v$ is a $6$-cycle.
If $f_2$ is a $4$-face $u_2vu_3y$, then by Proposition~\ref{adj4faces}, $y=u_4$.
Yet, now $vu_1u_2u_4xu_3$ is a $6$-cycle. 
If $f_2$ is a $5$-face $u_2vu_3yz$, then $vu_1u_2zyu_3$ is a $6$-cycle, unless $u_1\in\{z, y\}$.
If $u_1=z$, then $d(u_2)=2$.
If $u_1=y$, then $vu_2u_1yxu_4$ is a $6$-cycle. 
Therefore, $d(f_2)\geq 6$, and by symmetry, $d(f_4)\geq 6$. 

Without loss of generality, assume $x=u_2$ and consider $f_4$. 
Note that $f_2$ cannot be a $3$-face since this implies that $d(u_3)=2$. 
Since $v$ is not special, this implies that $f_4$ cannot be a $3$-face. 
If $f_4$ is a $4$-face $u_1vu_4y$, then by Proposition~\ref{adj4faces}, $y=u_3$. 
Yet, now $K^-_5\subseteq G[N[v]]$. 
If $f$ is a $5$-face $u_1vu_4yz$, then $u_1u_2vu_4yz$ is a $6$-cycle, unless $u_2\in\{y, z\}$.
If $u_2=z$, then $d(u_1)=2$, and if $u_2=y$, then $d(u_4)=2$. 
Therefore, $d(f_4)\geq 6$. 
If $d(f_2)\geq 5$, then $(i)$ is satisfied.
If $d(f_2)=4$, then $(ii)$ is satisfied since $u_2, u_3, u_4$ are each incident to at least two $4$-faces, none of them can be special.
\end{proof}
}

\begin{figure}[h]
\centering
\includegraphics{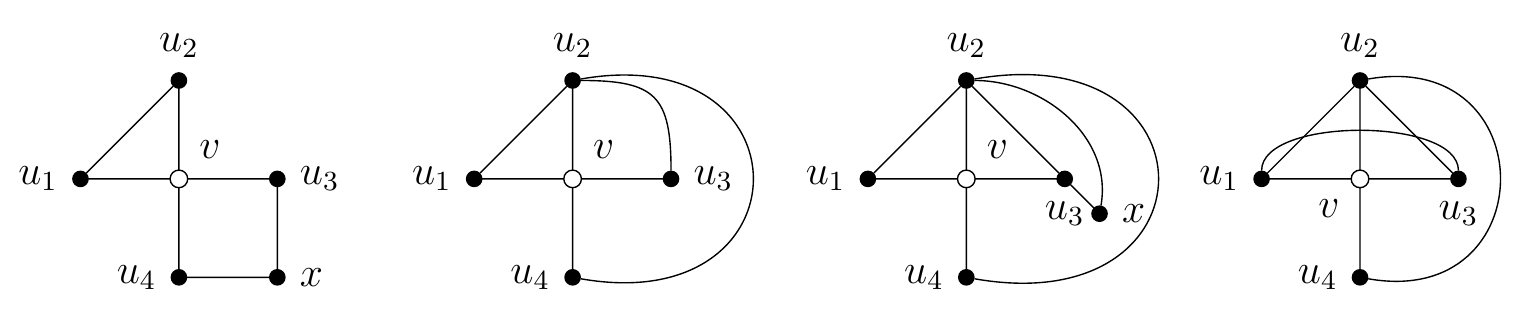}
  \caption{Pictures for Claim~\ref{4vertex-304} and Claim~\ref{4vertex-335}. 
}\label{fig:tikz:fig-helper-4vertex}
\end{figure}

\begin{claim}\label{4vertex-335}
If $f_1, f_2, f_3, f_4$ are consecutive faces of a $4$-vertex $v$ where $d(f_1)=d(f_2)=3$, $d(f_3)=5$, and $d(f_4)\geq 5$, then $d(f_4)\geq 7$ and $f_3$ is incident to a $5^+$-vertex. 
\end{claim}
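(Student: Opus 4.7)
The plan is to argue by contradiction in two stages, exploiting the structure forced by having two triangles $f_1 = vu_1u_2$ and $f_2 = vu_2u_3$ meeting along edge $vu_2$ together with the $5$-face $f_3$. Label the neighbors of $v$ as $u_1, u_2, u_3, u_4$ in cyclic order and write $f_3 = vu_3yxu_4$ so that $u_3y, yx, xu_4 \in E(G)$.

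For Stage 1, I will show that $f_3$ must contain a $5^+$-vertex. The main observation is that if neither $x$ nor $y$ coincides with any of $u_1, u_2, v, u_3, u_4$, then $vu_2u_3yxu_4$ is a $6$-cycle (using the triangle edge $u_2u_3$ together with the three face edges of $f_3$), a contradiction. I then dispatch each degenerate placement: choices $y \in \{u_2, u_3, u_4, v\}$ and $x \in \{u_3, u_4, v\}$ are impossible because they yield a loop or force two consecutive edges at some vertex on the $f_3$ boundary to coincide in $G$ (an invalid rotation corner in a simple graph). The cases $y = u_1$ and $x = u_1$ each give an alternative $6$-cycle through $u_1$, namely $vu_2u_3u_1xu_4$ or $vu_2u_3yu_1u_4$, unless the subcase $x = u_2$ arises — and that subcase is itself ruled out by a rotation conflict at $u_1$ between $f_1$ and $f_3$. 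The only consistent configuration left is $x = u_2$, and in that configuration $u_2$ has the five distinct neighbors $v, u_1, u_3, u_4, y$ and lies on $f_3$, so $u_2$ is the desired $5^+$-vertex.

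For Stage 2, I use the concrete description $f_3 = vu_3yu_2u_4$ from Stage 1 — which supplies the edges $u_2u_4$ and $u_2y$ — to exclude $d(f_4) \in \{5, 6\}$. If $d(f_4) = 5$ with $f_4 = vu_4bau_1$, the rotation at $u_4$ (from $f_3$) forbids $b = u_2$, the rotation at $u_1$ (from $f_1$) forbids $a = u_2$, and the pendant/rotation argument rules out $b \in \{u_1, u_4\}$ and $a = u_1$. In all remaining placements I either exhibit the $6$-cycle $vu_2u_4bau_1$ directly (using $u_2u_4 \in E$), or, in subcases where $a$ or $b$ coincides with $u_3$ or $y$, I produce a variant $6$-cycle through that vertex or a $K^-_5$ on $\{v, u_1, u_2, u_3, u_4\}$. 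If $d(f_4) = 6$, Proposition~\ref{degen6}(i) forces $f_4$ to be a degenerate $6$-face with boundary $vu_4p_1p_2p_3u_1$ and some vertex doubled; since $v$ has only one corner on $f_4$, the doubled vertex is one of $u_1, u_4, p_1, p_2, p_3$. The three cases $u_4 = p_2$, $u_1 = p_2$, $p_1 = p_3$ each collapse two $f_4$-edges at an interior vertex onto a single $G$-edge (invalid rotation corner). The remaining two cases, $u_4 = p_3$ and $u_1 = p_1$, both force $u_1u_4 \in E(G)$, and then combining this edge with the $f_3$-edges $u_3y, yu_2, u_2u_4$ yields the $6$-cycle $vu_3yu_2u_4u_1$.

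The hardest part will be organizing the degenerate cases, particularly for $d(f_4) = 6$: each possible position of the doubled vertex must be verified to either produce an invalid rotation corner or force the edge $u_1u_4$, and the overall argument depends on Stage 1 having already produced the $f_3$-internal edges $u_3y, yu_2, u_2u_4$ that close the final $6$-cycle through $u_1u_4$. Throughout, the rotation constraints at $u_1$ and $u_4$ inherited from the adjacent faces $f_1$ and $f_3$ are the key tool for eliminating the topologically suspicious configurations.
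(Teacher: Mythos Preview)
Your proof is correct and streamlines the paper's argument in one place. The paper splits Stage~1 into two cases according to whether the $f_3$-vertex adjacent to $u_3$ is $u_1$ (your $y=u_1$) or not; in the $y=u_1$ case it appeals to Lemma~\ref{reducible}(ii) to produce a $5^+$-vertex among $u_1,u_2,u_3$, and then repeats the Stage~2 analysis separately for that configuration. You instead eliminate $y=u_1$ outright: when $x$ is a new vertex the $6$-cycle $vu_2u_3u_1xu_4$ does it, and when $x=u_2$ both $f_1$ and $f_3$ would traverse the edge $u_1u_2$ in the same direction, which is impossible in an orientable embedding. (Your phrase ``rotation conflict at $u_1$'' is a little loose---the contradiction is really an orientation conflict on the edge $u_1u_2$ itself---but the reasoning is sound on the torus.) This collapses Stage~1 to the single configuration $x=u_2$, where $u_2$ lies on $f_3$ with the five distinct neighbours $v,u_1,u_3,u_4,y$, and your Stage~2 then exploits the resulting edge $u_2u_4$ to close each $6$-cycle directly. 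The paper's Stage~2, by contrast, runs its two cases in parallel, obtaining a $K_5^-$ in one and a $6$-cycle in the other once $u_1u_4$ is forced by Proposition~\ref{degen6}. In short, you have traded the appeal to reducibility for a brief orientability observation; both routes work, and yours is arguably cleaner.
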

\sout{
\begin{proof}
Let $u_1, u_2, u_3, u_4$ be the neighbors of $v$ in cyclic order so that $f_1$ is $u_1vu_2$, $f_2$ is $u_2vu_3$, and $f_3$ is $u_4vu_3xy$ for some $x, y$. 
Note that if $x=u_2$, then $d(u_3)=2$. 

Assume $x=u_1$.
If $d(u_1)=d(u_2)=d(u_3)=4$, then this contradicts Lemma~\ref{reducible} $(ii)$.
Thus, some vertex has higher degree, and therefore $f_3$ is incident to a $5^+$-vertex.
If $f_4$ is a $6$-face, then since $v$ cannot be incident to $f_4$ twice, it must be that $u_1u_4$ is an edge by Proposition~\ref{degen6}.
Yet, $K^-_5\subseteq G[N[v]]$. 
If $f_4$ is a $5$-face $zu_1vu_4w$, then $u_1u_2vu_4wz$ is a $6$-cycle, unless $u_2\in\{z, w\}$. 
If $u_2=z$, then $d(u_1)=2$ and if $u_2=w$, then $d(u_4)=2$. 

Assume $x\not\in \{u_1, u_2\}$. 
Note that $u_2$ is a $5$-vertex incident to $f_3$. 
If $f_4$ is a $6$-face, then since $v$ cannot be incident to $f_4$ twice, it must be that $u_1u_4$ is an edge by Proposition~\ref{degen6}.
Now, $u_1u_4u_2xu_3v$ is a $6$-cycle.
If $f_4$ is a $5$-face $zu_1vu_4w$, then $u_1u_2vu_4wz$ is a $6$-cycle, unless $u_2\in\{z, w\}$. 
If $u_2=z$, then $d(u_1)=2$ and if $u_2=w$, then $d(u_4)=2$. 
\end{proof}
}

\begin{claim}\label{4vertex-4444}
If a $4$-vertex $v$ is incident to only $4$-faces, then there are at least two incident $4$-faces that are not incident to special vertices. 
\end{claim}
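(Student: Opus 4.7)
My plan is to show the statement is in fact vacuously true: no $4$-vertex $v$ of $G$ can be incident to only $4$-faces. I would let $u_1, u_2, u_3, u_4$ denote the neighbors of $v$ in cyclic order, and for each $i$ (read mod $4$) let $f_i = u_i v u_{i+1} x_i$ be the $4$-face bounded by $vu_i$ and $vu_{i+1}$, with $x_i$ its fourth vertex.

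The main tool is Proposition~\ref{adj4faces}: applied to the adjacent pair $f_i, f_{i+1}$ sharing edge $vu_{i+1}$, it yields the dichotomy that either $x_i = u_{i+2}$ or $x_{i+1} = u_i$. I would first use these four cyclic dichotomies to rule out $x_i \notin N(v)$: if $x_1 \notin \{u_3, u_4\}$, the $(f_1, f_2)$ and $(f_4, f_1)$ dichotomies force $x_2 = u_1$ and $x_4 = u_2$, then $(f_2, f_3)$ forces $x_3 = u_2$, and finally $(f_3, f_4)$ leaves neither branch satisfied, a contradiction. Hence each $x_i \in \{u_{i-1}, u_{i+2}\}$. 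A case split on $x_1 \in \{u_3, u_4\}$, with further cyclic propagation, then pins down exactly two configurations: $(x_1, x_2, x_3, x_4) = (u_3, u_4, u_1, u_2)$ or $(u_4, u_1, u_2, u_3)$.

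In either configuration, listing the boundary edges of the four $4$-faces shows that every pair among $\{u_1, u_2, u_3, u_4\}$ is an edge, so $G[N[v]]$ contains a $K_5$, contradicting the hypothesis that $G$ has no $K^-_5$. The main obstacle is just the bookkeeping of the cyclic propagation; once the two configurations are fixed, the $K_5$ conclusion is immediate from reading off face boundaries. Since no such $v$ can exist in $G$, the claim holds vacuously.
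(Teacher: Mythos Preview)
Your argument is correct and in fact proves more than the claim requires: no $4$-vertex of $G$ can be incident to four $4$-faces, so Claim~\ref{4vertex-4444} holds vacuously. The cyclic propagation via Proposition~\ref{adj4faces} is sound, and both surviving configurations $(x_1,x_2,x_3,x_4)=(u_3,u_4,u_1,u_2)$ and $(u_4,u_1,u_2,u_3)$ do place all six edges among $u_1,\dots,u_4$, yielding $K_5\subseteq G[N[v]]$.

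This is a genuinely different route from the paper. The paper does not detect the vacuity; it fixes one face $f_2=u_2vu_3y$, splits on whether $y\in N[v]$, applies Proposition~\ref{adj4faces} once or twice to determine the fourth vertices of the neighbouring faces, and then observes that all vertices on two of the resulting faces lie in $N[v]$---each such vertex being incident to at least two $4$-faces and therefore not special. Your approach carries the same dichotomy all the way around $v$ and discovers that the only consistent assignments force a $K_5$. What you gain is a cleaner structural fact---the $4$-vertex analogue of Claim~\ref{faces44444}, which the paper states only for $5$-vertices---and it makes the corresponding case in the discharging analysis (Claim~\ref{good4vertex}) empty. What the paper's approach gains is brevity: it stops as soon as two suitable faces are exhibited, without tracking the full configuration.
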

\sout{
\begin{proof}
Let $u_1, u_2, u_3, u_4$ be the neighbors of $v$ in cyclic order so that $f_1$ is $u_1vu_2x$ for some $x$, $f_2$ is $u_2vu_3y$ for some $y$, and $f_3$ is $u_3vu_4z$ for some $z$.
Without loss of generality, either $y\not\in N[v]$ or $y=u_4$.
Since each vertex in $N[v]$ is incident to at least two $4$-faces, no vertex in $N[v]$ can be special.
If $y=u_1$, then by Proposition~\ref{adj4faces}, $z=u_2$. 
Thus, $f_1$ and $f_2$ are not incident to special vertices.
If $y\not\in N[v]$, then by Proposition~\ref{adj4faces}, $x=u_3$ and $z=u_2$.
Now, $f_1$, $f_2$, and $f_3$ are not incident to special vertices.
\end{proof}
}

For $i\in\{3, 4\}$, a vertex $v$ is {\it $i$-bad} if $d(v)=4$ and $v$ is incident to exactly $i$ $3$-faces.
A vertex is {\it bad} if it is either $3$-bad or $4$-bad; a vertex is {\it good} if it is neither bad nor special.
A face $f$ is {\it great} if $d(f)\geq 7$. 

\begin{figure}[h]
\centering
\includegraphics{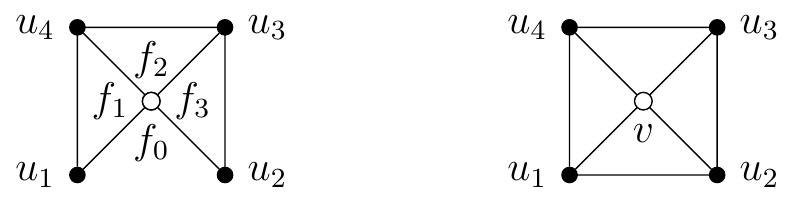}
  \caption{A $3$-bad vertex (left) and a $4$-bad vertex (right).
}\label{fig:tikz:badvertex}
\end{figure}

%

\begin{claim}\label{4bad}
A face that is not incident to a $4$-bad vertex $v$ but is adjacent to a $3$-face incident to $v$ is a great face. 
\end{claim}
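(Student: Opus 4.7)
The plan is to work with a $4$-bad vertex $v$ whose neighbors $u_1,u_2,u_3,u_4$ are in cyclic order, so that the four $3$-faces at $v$ are $vu_iu_{i+1}$ and the edges $u_iu_{i+1}$ form a $4$-cycle in $G$. By symmetry, it suffices to consider the face $f'$ lying on the other side of the edge $u_1u_2$ from the $3$-face $vu_1u_2$, and to show $d(f') \geq 7$. I will assume $d(f') \leq 6$ and aim to produce a $6$-cycle in $G$ (or an instance of $K^-_5$), contradicting our hypotheses.

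Two preliminaries come first. Neither $u_1u_3$ nor $u_2u_4$ is an edge, since either chord together with the $4$-cycle $u_1u_2u_3u_4$ and the four edges $vu_i$ would produce $K^-_5 \subseteq G[N[v]]$. Next, since the two $3$-faces at $u_1$ share the edge $u_1v$, the neighbors $u_2,v,u_4$ appear consecutively in the cyclic order around $u_1$; hence the neighbor $x$ of $u_1$ other than $u_2$ that is incident to $f'$ satisfies $x \notin \{u_2,v,u_4\}$, and by the chord observation $x \neq u_3$, so $x \notin N[v]$. Symmetrically, the neighbor $y$ of $u_2$ on $f'$ lies outside $N[v]$.

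Now I split on $d(f')$. If $d(f') = 3$, then $f' = u_1u_2w$ with $w = x = y \notin N[v]$, and $vu_3u_2wu_1u_4$ is a $6$-cycle. If $d(f') = 4$, write $f' = u_1u_2yx$; then $vu_3u_2yxu_1$ is a $6$-cycle. If $d(f') = 5$, write $f' = u_1u_2yzx$ and handle the subcases $z \notin N[v]$, $z = u_3$, and $z = u_4$ (all other locations for $z$ are ruled out by the preliminaries); in every subcase, $vu_1 x z' y u_2$ (with $z'$ the corresponding choice of $z$, $u_3$, or $u_4$) is a $6$-cycle. Finally, if $d(f') = 6$, Proposition~\ref{degen6}$(i)$ forces $f'$ to be degenerate, so in the facial walk $u_1u_2yz_1z_2x$ one distance-$3$ pair must be identified: $u_1 = z_1$, $u_2 = z_2$, or $x = y$. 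In each of the three cases the identification directly produces an edge from $\{u_1,u_2\}$ to an off-$N[v]$ vertex (e.g., $yu_1$ when $u_1 = z_1$ or $x = y$, and $xu_2$ when $u_2 = z_2$), and this edge combined with the $4$-cycle $u_1u_2u_3u_4$ and the vertex $v$ yields a $6$-cycle such as $vu_3u_2yu_1u_4$ or its symmetric counterpart $vu_4u_1xu_2u_3$.

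The main obstacle is the $d(f') = 6$ case, where one must enumerate the possible degenerate forms and verify that none slips through; the saving observation is that Proposition~\ref{degen6}$(i)$ together with the constraint $x,y \notin N[v]$ means every admissible identification of a distance-$3$ pair shortcuts the facial walk into a single edge from $\{u_1,u_2\}$ to a vertex outside $N[v]$, and the $4$-cycle at $v$ then inevitably closes it into a $6$-cycle, contradicting that $G$ has no $6$-cycles.
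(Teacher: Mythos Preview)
Your proof is correct and follows essentially the same approach as the paper: a case analysis on $d(f')\in\{3,4,5,6\}$ that uses the $4$-cycle $u_1u_2u_3u_4$ together with the edges $vu_i$ to produce a forbidden $6$-cycle (or a $K_5^-$). Your additional preliminary that $x,y\notin N[v]$, obtained from the rotation at $u_1$ and $u_2$, streamlines the subcases that the paper handles separately (where $x$ or $y$ lands in $\{u_3,u_4\}$ and yields $K_5^-$), and your $6$-face argument via Proposition~\ref{degen6}$(i)$ arrives at the same edges $u_1y$ or $u_2x$ that the paper extracts from Proposition~\ref{degen6}$(ii)$.
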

\sout{
\begin{proof}
Let $u_1, u_2, u_3, u_4$ be the neighbors of $v$ in cyclic order as in Figure~\ref{fig:tikz:badvertex}.
By symmetry, we just need to show that a face $f$ that is adjacent to $u_1u_2$ but is not incident to $v$ is a great face.
Note that if either $u_1u_3$ or $u_2u_4$ is an edge, then $K^-_5\subseteq G[N[v]]$. 
If $f$ is a $3$-face $u_1u_2x$, then $x\in \{u_3, u_4\}$, otherwise $xu_1vu_4u_3u_2$ is a $6$-cycle.
Yet, if $x\in\{u_3, u_4\}$, then $K^-_5\subseteq G[N[v]]$.
If $f$ is a $4$-face $u_1u_2xy$, then $\{x, y\}=\{u_3, u_4\}$, otherwise $G$ has a $6$-cycle. 
Since $x\neq u_4$ and $y\neq u_3$, it must be that $x=u_3$ and $y=u_4$, which implies that $d(u_3)=3$. 
Also, $f$ cannot be a $5$-face since $f$ along with $v$ would form a $6$-cycle.
If $f$ is a $6$-face where $x, u_1, u_2, y$ are consecutive vertices on $f$, then, by Proposition~\ref{degen6} $(ii)$, either $xu_2\in E(G)$ or $u_1y\in E(G)$. 
In all cases, we get a $6$-cycle or a $K^-_5$. 
\end{proof}
}

\begin{claim}\label{3bad-triangle}
A face that is not incident to a $3$-bad vertex $v$ but is adjacent to a $3$-face incident to $v$ cannot be a $3$-face. 
\end{claim}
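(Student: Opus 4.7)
My plan is to fix the cyclic neighbors $u_1,u_2,u_3,u_4$ of the $3$-bad vertex $v$ so that the three $3$-faces at $v$ are $vu_1u_2$, $vu_2u_3$, and $vu_3u_4$. A face adjacent to one of these $3$-faces but not incident to $v$ must meet it across one of the non-$v$ edges $u_1u_2$, $u_2u_3$, or $u_3u_4$. Under the symmetry $(u_1,u_2)\leftrightarrow(u_4,u_3)$ it will suffice to rule out a $3$-face on the opposite side of the ``outer'' edge $u_1u_2$ and on the opposite side of the ``inner'' edge $u_2u_3$.

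Assuming $f=u_1u_2x$ is a $3$-face with $x\neq v$, I would split on $x$. If $x=u_4$, then the $3$-face contributes the edges $u_1u_4$ and $u_2u_4$; together with $vu_1,vu_2,vu_3,vu_4,u_1u_2,u_2u_3,u_3u_4$ this puts $9$ edges on $\{v,u_1,u_2,u_3,u_4\}$, so $G[N[v]]$ contains $K_5^-$, contradicting the hypothesis of Theorem~\ref{result}. If $x\notin N[v]$, then $xu_1vu_4u_3u_2$ is a $6$-cycle. If $x=u_3$, then the three $3$-faces $vu_1u_2$, $vu_2u_3$, $u_1u_2u_3$ are pairwise face-adjacent at $u_2$ across the three distinct edges $u_2u_1$, $u_2v$, $u_2u_3$; these three face-incidences already close up the cyclic arrangement of faces at $u_2$, so $d(u_2)=3$, contradicting $\delta(G)\geq 4$. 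For the inner edge I would run the parallel analysis for $f'=u_2u_3y$: $y=u_1$ coincides with the $x=u_3$ case above; $y=u_4$ runs the same local argument at $u_3$ to force $d(u_3)=3$; and $y\notin N[v]$ yields the $6$-cycle $yu_2u_1vu_4u_3$.

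The step I expect to be the main obstacle is the subcase $x=u_3$ (and its mirror $y=u_4$). In the $4$-bad analogue, Claim~\ref{4bad}, the ``closing'' edge $u_1u_4$ is available and a $3$-face across $u_1u_2$ immediately gives $K_5^-$ in $G[N[v]]$. Here $u_1u_4$ is not present, so the analogous edge count produces only $8$ edges on $N[v]$, short of $K_5^-$. The replacement is the local-embedding argument above: three pairwise face-adjacent $3$-faces at a vertex exhaust its rotation, which is only possible when that vertex has degree $3$.
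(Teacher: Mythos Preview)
Your proof is correct and follows essentially the same case analysis as the paper's own argument: the outer-edge $3$-face yields either a $6$-cycle, a $K_5^-$ in $G[N[v]]$, or the degree-$3$ closure at $u_2$, while the inner-edge $3$-face yields either a $6$-cycle or the degree-$3$ closure at $u_2$ or $u_3$. The one point to tighten is your phrase ``$y=u_1$ coincides with the $x=u_3$ case'': the faces are on opposite sides of different edges, so they are not literally the same configuration, but the identical rotation-closure argument at $u_2$ applies, which is exactly what the paper does as well.
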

\sout{
\begin{proof}
Let $f_0, f_1, f_2, f_3$ be consecutive faces of $v$ and let $u_1, u_2, u_3, u_4$ be the neighbors of $v$ in cyclic order as in Figure~\ref{fig:tikz:badvertex}.
Note that $f_0$ cannot be a $3$-face, otherwise $v$ would be a $4$-bad vertex. 
Assume $f_2$ was adjacent to a $3$-face $u_3u_4x$ that is not $f_1, f_3$. 
If $x\not\in \{u_1, u_2\}$, then $xu_4u_1vu_2u_3$ is a $6$-cycle. 
If $x\in\{u_1, u_2\}$, then either $d(u_3)=3$ or $d(u_4)=3$. 

Without loss of generality, assume $f_3$ is adjacent to a $3$-face $u_2u_3x$ that is not $f_2$. 
If $x\not\in \{u_1, u_4\}$, then $xu_2vu_1u_4u_3$ is a $6$-cycle. 
If $x=u_4$, then $d(u_3)=3$. 
If $x=u_1$, then $K^-_5\subseteq G[N[v]]$.
\end{proof}
}

\begin{corollary}\label{3bad-bigface}
Each $3$-bad vertex $v$ is incident to either a great face or a degenerate $6$-face.
\end{corollary}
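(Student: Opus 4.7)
The plan is to combine Claim~\ref{faces303} with Proposition~\ref{degen6}(i); almost all the work has already been done. Fix a $3$-bad vertex $v$, and let $u_1,u_2,u_3,u_4$ denote its neighbors in cyclic order. After relabeling if necessary, I may assume the three $3$-faces at $v$ are consecutive (they must be, since $v$ has only four incident faces). So write the faces of $v$ in cyclic order as $f_0,f_1,f_2,f_3$ with $f_1=vu_1u_2$, $f_2=vu_2u_3$, and $f_3=vu_3u_4$, while $f_0$ is the remaining face, incident to the edges $vu_4$ and $vu_1$. Because $v$ is \emph{exactly} $3$-bad, $f_0$ is not a $3$-face, i.e., $d(f_0)\ge 4$.

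Next I would note that $f_3,f_0,f_1$ is a list of three consecutive faces of $v$, reading cyclically, with $d(f_3)=d(f_1)=3$ and $d(f_0)\ne 3$. Claim~\ref{faces303} applies directly and yields $d(f_0)\ge 6$.

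To finish, I would split on $d(f_0)$. If $d(f_0)\ge 7$, then $f_0$ is a great face by definition, and we are done. If instead $d(f_0)=6$, then Proposition~\ref{degen6}(i), which encodes the fact that no $6$-face of $G$ can be bounded by a genuine $6$-cycle (since $G$ has none), forces $f_0$ to be a degenerate $6$-face. Either way, $v$ is incident to a great face or a degenerate $6$-face, as required.

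The hard work was already carried out in Claim~\ref{faces303} and Proposition~\ref{degen6}, so there is no real obstacle here; the only care needed is to identify $f_3,f_0,f_1$ as the correct consecutive triple of faces around $v$ in order to apply Claim~\ref{faces303}. The corollary is stated separately because this consequence is precisely the hypothesis that will be invoked repeatedly in the discharging argument in Section~3.
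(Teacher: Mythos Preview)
Your proof is correct and follows essentially the same approach as the paper: apply Claim~\ref{faces303} to the consecutive triple $f_3,f_0,f_1$ to force $d(f_0)\ge 6$, and then invoke Proposition~\ref{degen6}(i) to conclude that a $6$-face must be degenerate. The paper's version is simply terser, omitting the explicit labeling of the triple and the explicit citation of Proposition~\ref{degen6}(i).
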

\sout{
\begin{proof}
Let $f_0$ be the face incident to $v$ that is not a $3$-face. 
By Claim~\ref{faces303}, $d(f_0)\geq 6$. 
If $f_0$ is a $6$-face, it must be a degenerate $6$-face, otherwise, $f_0$ is a great face. 
\end{proof}
}

\begin{corollary}\label{3bad-6}
If a $3$-bad vertex $v$ is incident to a degenerate $6$-face $f$, then a face that is not incident to $v$ but is adjacent to a face incident to $v$ must be a great face. 
\end{corollary}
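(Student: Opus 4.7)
Let $u_1, u_2, u_3, u_4$ be the neighbors of $v$ in cyclic order, so that $vu_1u_2$, $vu_2u_3$, $vu_3u_4$ are the three $3$-faces incident to $v$ and the degenerate $6$-face $f$ sits in the remaining angle between $vu_4$ and $vu_1$. I first pin down the local structure of $f$. Since $v$ has degree $4$ and all four face-corners at $v$ are already assigned (three to the triangles and one to $f$), $v$ cannot occupy two corners of $f$, so $v$ is not the pinch vertex of $f$. A short case analysis on where $v$ sits in the boundary walk $xyzayw$ of $f$ then forces the pinch to be $u_1$ or $u_4$ (WLOG $u_1$), produces $u_1u_4 \in E(G)$, and introduces two additional vertices $z, a$ on $\partial f$, both adjacent to $u_1$, with $za \in E(G)$; any collision of $\{z, a\}$ with $\{u_2, u_3\}$ is ruled out by the minimum-degree and $K^-_5$-free conditions on $G$.

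Next, the edges that lie on the boundary of some face incident to $v$ but are not shared between two such faces are the outer triangle edges $u_1u_2, u_2u_3, u_3u_4$ and the non-$v$ edges $u_4u_1, u_1z, za, au_1$ of $f$. Every face adjacent to a face incident to $v$ but not itself incident to $v$ lies across one of these seven edges, so it suffices to show that the face $F$ across each such edge is great. For each edge $e$ I would rule out each small option $d(F) \in \{3,4,5,6\}$ in turn: $d(F)=3$ on a triangle edge is handled by Claim~\ref{3bad-triangle}; for $d(F)=4$ or $5$, I exhibit either a $6$-cycle in $G$ obtained by closing a walk through $F$, a triangle at $v$, and $v$, or a $K^-_5 \subseteq G[N[v]]$ arising when the vertices of $F$ identify with the $u_i$'s so as to add one of the forbidden chords $u_1u_3$ or $u_2u_4$; for $d(F)=6$, Proposition~\ref{degen6} forces $F$ itself to be degenerate and a similar walk-tracing closes the remaining subcases. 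This argument parallels and extends the proof of Claim~\ref{4bad}.

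The main obstacle is the sheer number of cases, most of them arising for the three $f$-edges $u_1z$, $za$, $au_1$, where $F$'s extra vertices can coincide with $u_2, u_3, u_4, z, a$ in many configurations. Each identification must be dispatched by producing either a $6$-cycle, a $K^-_5$ in $G[N[v]]$, or a vertex of degree less than $4$, and keeping this case analysis exhaustive and consistent across all seven edges is the most labor-intensive part of the proof.
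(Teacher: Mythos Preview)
Your structural analysis in step~1 is correct, but you are working much harder than necessary. Once you have established that $v$ is not the pinch vertex of $f$ (because all four face-corners at $v$ are accounted for), Proposition~\ref{degen6}\,$(ii)$ applied to the consecutive vertices $u_4, v, u_1$ on $f$ gives $u_4u_1 \in E(G)$ \emph{directly}: if $u_4u_1$ were not an edge, $v$ would have to be incident to $f$ twice. No case analysis on where $v$ sits in the walk $xyzayw$ is needed to extract this edge.

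The key shortcut you are missing is that once $u_4u_1 \in E(G)$, the four neighbors $u_1,u_2,u_3,u_4$ of $v$ form a $4$-cycle in $G$, \emph{exactly as in the $4$-bad configuration}. The proof of Claim~\ref{4bad} uses only this neighbor $4$-cycle (together with the fact that the chords $u_1u_3$ and $u_2u_4$ are absent, since either would yield $K_5^- \subseteq G[N[v]]$); it never uses that the fourth face at $v$ is a triangle. So the paper's entire argument is two lines: deduce $u_4u_1\in E(G)$ from Proposition~\ref{degen6}\,$(ii)$, then invoke Claim~\ref{4bad} verbatim for the faces across $u_1u_2$, $u_2u_3$, $u_3u_4$, $u_4u_1$. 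You recognize the parallel (``parallels and extends Claim~\ref{4bad}'') but then redo the casework instead of citing it.

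Your plan also treats the faces across the three remaining boundary edges $u_1z$, $za$, $au_1$ of $f$. This is a literal reading of the statement that the paper's own proof does not address, and it is not used anywhere in the discharging: every application of Corollary~\ref{3bad-6} (in the analyses of $5^+$-vertices responsible for bad vertices and of $3$-bad vertices incident to a degenerate $6$-face) concerns only faces across the neighbor-cycle edges $u_iu_{i+1}$. So the ``main obstacle'' you identify is extra work toward a strengthening that the paper neither proves nor needs; you can drop those three edges entirely.
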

\sout{
\begin{proof}
Let $u_1, u_2\in N(v)$ so that $u_1, v, u_2$ are consecutive vertices of $f$. 
Since $v$ cannot be incident to $f$ twice, by Proposition~\ref{degen6} $(ii)$, it must be that $u_1u_2\in E(G)$. 
The rest of the proof is identical to Claim~\ref{4bad}.
\end{proof}
}

\begin{corollary}\label{3bad-7}
If a $3$-bad vertex $v$ is incident to a great face $f$, then a face that is not incident to $v$ but is adjacent to a $3$-face incident to $v$ has length at least $6$.
\end{corollary}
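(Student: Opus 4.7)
The plan is to mirror the case analysis used in Claim~\ref{4bad}. Let $u_1, u_2, u_3, u_4$ be the neighbors of the $3$-bad vertex $v$ in cyclic order so that $vu_1u_2$, $vu_2u_3$, $vu_3u_4$ are the three $3$-faces incident to $v$ and $f$ is the great fourth face (incident between the edges $vu_4$ and $vu_1$). A face $g$ that is not incident to $v$ but adjacent to a $3$-face incident to $v$ must lie on the opposite side of one of the edges $u_1u_2$, $u_2u_3$, $u_3u_4$ from $v$. By the symmetry exchanging $u_1 \leftrightarrow u_4$ and $u_2 \leftrightarrow u_3$, it suffices to consider $g$ sharing the edge $u_1u_2$ with $vu_1u_2$, or $g$ sharing the middle edge $u_2u_3$ with $vu_2u_3$. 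I want to show $g$ cannot be a $3$-, $4$- or $5$-face.

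For each possible small length of $g$, I would enumerate the ways the other boundary vertices of $g$ can be placed in or out of $N[v]$. A boundary vertex of $g$ outside $N[v]$, together with the short path around the rest of $g$, typically closes up with a two-edge path through $v$ (using the spokes $vu_i, vu_j$ together with some of the triangle edges $u_1u_2, u_2u_3, u_3u_4$) into a $6$-cycle, which is forbidden. A boundary vertex of $g$ inside $N[v]$ instead forces a new edge among $\{u_1, u_2, u_3, u_4\}$: depending on which edge appears, $G[N[v]]$ is driven either to contain a $K^-_5$ (forbidden by hypothesis) or to contain $K_5$ itself, or else the rotation at $u_2$ or $u_3$ is over-constrained. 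In this last situation the two $3$-faces at $u_2$ already place $v$ between its two triangle-neighbors in the cyclic order at $u_2$, so a short face using both of $u_2$'s triangle edges on the other side leaves no room for another neighbor of $u_2$, producing $d(u_2)=3$ and contradicting the minimum degree.

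The main novelty compared with Claim~\ref{4bad} is that here the edge $u_1u_4$ is \emph{not} automatically present, since the fourth face at $v$ is $f$ rather than a triangle, so the immediate $K^-_5$ obstruction used in that claim is not always available. I expect the hardest subcase to be when $g$ is a $4$-face whose two remaining boundary vertices are drawn from $\{u_1, u_3, u_4\}$: adding only one of the missing diagonals of $N[v]$ can leave $G[N[v]]$ at just a $W_4$, which is not by itself forbidden. Here I plan to fall back on the rotation argument above to force a neighbor of $v$ to have degree $3$. Collecting the symmetric positions for $g$ and the lengths $3$, $4$, $5$, every configuration either produces a $6$-cycle, a $K^-_5$, or a vertex of degree less than $4$, completing the proof.
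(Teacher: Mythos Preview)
Your plan is correct and matches the paper's approach: a case split on $d(g)\in\{3,4,5\}$ and, within each, on whether the extra boundary vertices of $g$ fall inside or outside $N[v]$, resolved by a $6$-cycle, a $K_5^-$ in $G[N[v]]$, or a degree-$3$ rotation contradiction. The only real difference is bookkeeping: the paper dispatches the $3$-face case in one line by citing Claim~\ref{3bad-triangle}, and handles the $5$-face case with the single observation that the $5$-face together with $v$ is already a $6$-cycle (the five face-vertices are distinct and $v$ is not among them), so neither of those lengths needs a subcase analysis. Your diagnosis of the delicate $4$-face subcase is exactly right: when both extra vertices of $g$ coincide with neighbours of $v$ in the order that reuses the existing triangle edges (e.g.\ $g=u_1u_2u_3u_4$ on the side edge), $G[N[v]]$ is only a $W_4$ and one must instead read off $d(u_2)=3$ from the rotation, which is precisely the degree argument the paper uses as well.
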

\sout{
\begin{proof}
Let $f_0, f_1, f_2, f_3$ be consecutive faces of $v$ and let $u_1, u_2, u_3, u_4$ be the neighbors of $v$ in cyclic order as in Figure~\ref{fig:tikz:badvertex}.
Let $f$ be the face adjacent to $u_3u_4$ that is not $f_2$. 
By Claim~\ref{3bad-triangle}, $f$ cannot be a $3$-face. 
If $f$ is a $4$-face $u_3u_4xy$, then $\{x, y\}=\{u_1, u_2\}$, otherwise $G$ has a $6$-cycle. 
If either $y=u_2$ or $x=u_1$, then either $d(u_3)=2$ or $d(u_4)=2$. 
If $x=u_2$ and $y=u_1$, then $K^-_5\subseteq G[N[v]]$. 
Note that $f$ cannot be a $5$-face since $f$ along with $v$ would form a $6$-cycle.

Without loss of generality, let $f$ be the face adjacent to $u_2u_3$ that is not $f_3$. 
By Claim~\ref{3bad-triangle}, $f$ cannot be a $3$-face. 
If $f$ is a $4$-face $u_2xyu_3$ for some $x, y$, then $u_4vu_2xyu_3$ is a $6$-cycle, unless $u_4\in\{x, y\}$. 
Since $u_4=y$ implies $d(u_3)=3$, it must be that $u_4=x$. 
If $x=u_1$, then $K^-_5\subseteq G[N[v]]$, and if $x\neq u_1$, then $u_4xu_3u_2vu_1$ is a $6$-cycle. 
Note that $f$ cannot be a $5$-face since $f$ along with $v$ would form a $6$-cycle.
\end{proof}
}

\begin{corollary}\label{3bad-7-44}
Given a $3$-bad vertex $v$ incident to a great face, let $u_1, u_2, u_3, u_4$ be the neighbors of  $v$ in cyclic order so that $u_1vu_2$ is not a $3$-face. 
If $d(u_3)=d(u_4)=4$, then each face that is not incident to $v$ but is adjacent to a $3$-face incident to $v$ is great.
\end{corollary}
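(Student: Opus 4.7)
The plan is to combine Corollary~\ref{3bad-7} with Proposition~\ref{degen6}(i) and the hypothesis $d(u_3)=d(u_4)=4$ to lift the length of each of the three outside faces (the ones adjacent to $u_2u_3$, $u_3u_4$, and $u_4u_1$) from $\ge 6$ up to $\ge 7$. Set $N(u_3)=\{v,u_2,u_4,x\}$ and $N(u_4)=\{v,u_1,u_3,y\}$; the two $3$-faces at $u_3$ force its cyclic edge-order to be $(u_2,v,u_4,x)$, and similarly at $u_4$ it is $(u_3,v,u_1,y)$. Consequently the outside face $f_B$ adjacent to $u_3u_4$ meets $u_3$ via $u_3u_4,u_3x$ and meets $u_4$ via $u_4u_3,u_4y$, while the outside face $f_A$ adjacent to $u_4u_1$ meets $u_4$ via $u_4u_1,u_4y$ and meets $u_1$ via $u_1u_4,u_1w$ for some neighbor $w$ of $u_1$.

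Corollary~\ref{3bad-7} gives length $\ge 6$ for each outside face, so it suffices to rule out length exactly $6$. If $f$ is such a $6$-face, Proposition~\ref{degen6}(i) forces $f$ to be degenerate, and since $\delta(G)\ge 4$ the repeated vertex must appear at boundary-walk positions exactly three apart (any smaller distance traverses an edge twice, forcing a pendant). For $f_B$ with walk $(x,u_3,u_4,y,a_5,a_6)$, the three possibilities are $x=y$, $a_5=u_3$, or $a_6=u_4$. If $x=y$, then $x\in N(u_3)\cap N(u_4)$ and $vu_1u_4xu_3u_2$ is a $6$-cycle. If $a_5=u_3$, then the four edges of $u_3$ on $f_B$ are $u_3x,u_3u_4,u_3y,u_3a_6$, so $\{x,u_4,y,a_6\}=N(u_3)=\{v,u_2,u_4,x\}$, forcing $\{y,a_6\}=\{v,u_2\}$, which is impossible since $v\notin f_B$. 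The case $a_6=u_4$ is symmetric; either way, $f_B$ is great.

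For $f_A$, if $f_A=f_B$ then it is great by the above, so assume $f_A\neq f_B$ and write its walk as $(w,u_1,u_4,y,a_5,a_6)$. The degeneracy cases are $w=y$ (giving the $6$-cycle $vu_2u_3u_4yu_1$, with the degenerate sub-case $y=u_2$ instead yielding $K_5^-\subseteq G[N[v]]$), $u_1=a_5$ (giving the edge $u_1y$ and the $6$-cycle $vu_1yu_4u_3u_2$, similarly handled when $y=u_2$), and $u_4=a_6$. In the last case the edge-counting argument from $f_B$ gives $\{a_5,w\}\subseteq N(u_4)=\{v,u_1,u_3,y\}$; ruling out $v$ (not on $f_A$), $u_1$ (which reopens the $u_1=a_5$ case or forces a self-loop at $w$), and $y$ (self-loop), we conclude $a_5=w=u_3$. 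But then the walk of $f_A$ traverses the edge $u_3u_4$, whose only non-$3$-face side is $f_B$, forcing $f_A=f_B$, a contradiction. The outside face adjacent to $u_2u_3$ is handled identically under the symmetry $u_1\leftrightarrow u_2,\ u_3\leftrightarrow u_4$.

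The main obstacle will be the $u_4=a_6$ sub-case for $f_A$: because $u_1$ is not assumed to have degree $4$, the pendant-vertex trick that resolved $u_3,u_4$ does not apply, and one must instead track the forced arrangement of $u_4$'s neighbors around $f_A$ and observe the collapse $f_A=f_B$. Once $f_A$ and $f_B$ have been dispatched, the third outside face follows immediately by symmetry.
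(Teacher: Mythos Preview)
Your overall plan is sound and matches the paper's: invoke Corollary~\ref{3bad-7} to get each outside face to length $\ge 6$, then use the degeneracy of $6$-faces to rule out length exactly $6$. The paper does this more economically by applying Proposition~\ref{degen6}(ii),(iii),(iv) directly: for the middle face it uses (iii) to force $x=y$ (else $u_3$ or $u_4$ lies on the face twice, contradicting (iv)), and for each side face it uses (ii) to force an extra edge at the degree-$4$ neighbour. Your manual enumeration of repeat positions is a legitimate alternative, but one subcase contains a genuine error.

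In the $f_B$ case with $a_5=u_3$, you assert that the four boundary edge-ends $u_3x,\,u_3u_4,\,u_3y,\,u_3a_6$ are four \emph{distinct} edges, and hence $\{x,u_4,y,a_6\}=N(u_3)$. This is false. Because the two $3$-faces at $u_3$ share the edge $u_3v$, the cyclic order at $u_3$ is $u_2,v,u_4,x$, so the two non-$3$-face slots are $(u_4,x)$ and $(x,u_2)$, which are \emph{adjacent} and share the edge $u_3x$. Thus if $u_3$ lies on $f_B$ twice, only three distinct edges of $u_3$ appear on $\partial f_B$ (the edge $u_3x$ is traversed twice), and in fact $\{y,a_6\}=\{x,u_2\}$, not $\{v,u_2\}$. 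Your stated contradiction (``$v\notin f_B$'') therefore does not fire. The same flaw affects the symmetric subcase $a_6=u_4$ for $f_B$.

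The repair is immediate once you see what went wrong: a doubly-incident $u_3$ forces $f_B$ into two \emph{consecutive} face-slots of $u_3$, which is exactly what Proposition~\ref{degen6}(iv) forbids; alternatively, the repeated traversal of $u_3x$ would make $f_B$ the face on both sides of that edge, impossible since $\delta(G)\ge 4$. Either observation closes the gap, and then your treatment of $f_A$ (where you only claim $\{a_5,w\}\subseteq N(u_4)$, not equality) goes through as written.
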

\sout{
\begin{proof}
Let $x\in N(u_2)\setminus\{u_1, v, u_3\}$ and let $y\in N(u_3)\setminus\{u_2, v, u_4\}$. 
For $i\in[3]$, let $f_i$ be the face that is incident to an edge $u_iu_{i+1}$ that is not a $3$-face.
By Corollary~\ref{3bad-7}, we know that $d(f_i)\geq 6$. 
Assume for the sake of contradiction that $f_i$ is a $6$-face for some $i\in[3]$.
Assume $i=1$.
By Proposition~\ref{degen6} $(ii)$, either $xu_1$ is an edge or $u_2$ is incident to $f_1$ twice. 
Yet, by Proposition~\ref{degen6} $(iv)$, $u_2$ cannot be incident to $f_1$ twice, so $xu_1$ must be an edge. 
If $x\neq u_4$, then $xu_1vu_4u_3u_2$ is a $6$-cycle.
If $x=u_4$, then $K^-_5\subseteq G[N[v]]$. 
By symmetry, this also solves the case when $i=3$. 

Assume $i=2$.
If $x=y$, then $x\not\in N[v]$.
Now, $u_1u_2xu_3u_4v$ is a $6$-cycle. 
If $x\neq y$, then by Proposition~\ref{degen6} $(iii)$, either $u_2$ or $u_3$ is incident to $f_2$ twice. 
In either case, this contradicts Proposition~\ref{degen6} $(iv)$.
\end{proof}
}

\begin{corollary}\label{bad-adjacent}
No two bad vertices are adjacent to each other.
\end{corollary}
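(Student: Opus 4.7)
The plan is to suppose for contradiction that $G$ contains adjacent bad vertices $u$ and $v$, and to handle two cases: when at least one of them is $4$-bad, and when both are $3$-bad. Suppose first that $u$ is $4$-bad with neighbors $u_1, u_2, u_3, u_4$ in cyclic order, so that after relabeling $v = u_1$. Since every face at $u$ is a triangle, the $3$-faces $uvu_2$ and $uu_4v$ lie on either side of edge $uv$; hence $u_2, u_4 \in N(v)$ and the edges $vu_2, vu, vu_4$ appear consecutively at $v$, followed by $v$'s remaining edge to some vertex $w$. Applying Claim~\ref{4bad} to the $4$-bad vertex $u$, the face at edge $vu_2$ on the side opposite $u$ and the face at edge $vu_4$ on the side opposite $u$ are both great. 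Together with the triangles $uvu_2$ and $uu_4v$, these exhaust the four faces in the cyclic list at $v$, so $v$ is incident to exactly two $3$-faces and is therefore neither $3$-bad nor $4$-bad, a contradiction. A symmetric argument handles the case that $v$ is $4$-bad.

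In the remaining case, both $u$ and $v$ are $3$-bad. Label the neighbors of $u$ in cyclic order as $u_1, u_2, u_3, u_4$ so that the three $3$-faces of $u$ are $uu_1u_2$, $uu_2u_3$, $uu_3u_4$ and the non-triangle face of $u$ lies between $u_4$ and $u_1$; write $v = u_i$. The plan is to exhibit a $3$-face $T$ of $u$ containing $v$ together with a $3$-face $T'$ of $u$ that shares an edge $uu_j$ with $T$ (for some $j \neq i$) yet whose vertex set excludes $v$. Explicitly, if $i = 1$ take $T = uu_1u_2$, $T' = uu_2u_3$; if $i = 2$ take $T = uu_2u_3$, $T' = uu_3u_4$; if $i = 3$ take $T = uu_2u_3$, $T' = uu_1u_2$; if $i = 4$ take $T = uu_3u_4$, $T' = uu_2u_3$. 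In each case $T$ is a $3$-face of $v$ as well (being incident to $v$), $T'$ is adjacent to $T$ in $G$, and $v$ does not lie on the boundary of $T'$. Claim~\ref{3bad-triangle} applied to the $3$-bad vertex $v$ then forces $T'$ not to be a $3$-face, contradicting that $T'$ is a triangle.

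The main obstacle is just making the correct choice of $T$; the point is that $u$'s three $3$-faces form a consecutive fan at $u$, so regardless of which of $u_1, \ldots, u_4$ plays the role of $v$, one can always pick a triangle of $u$ containing $v$ whose other $u$-incident edge (i.e.\ not $uv$) is shared with another triangle of $u$ rather than with $u$'s non-triangle face, and that adjacent triangle automatically avoids $v$.
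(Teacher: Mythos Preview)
Your proof is correct and follows essentially the same approach as the paper, which simply states that the corollary ``follows from Claim~\ref{4bad} and Claim~\ref{3bad-triangle}.'' You have spelled out the implicit case analysis: when one of the two bad vertices is $4$-bad you invoke Claim~\ref{4bad}, and when both are $3$-bad you invoke Claim~\ref{3bad-triangle}, exactly as the paper intends.
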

\sout{
\begin{proof}
Follows from Claim~\ref{4bad} and Claim~\ref{3bad-triangle}.
\end{proof}
}

For $i\in\{1, 2\}$, a $5^+$-vertex $u$ is {\it $i$-responsible for} an adjacent bad vertex $v$ if $uv$ is incident to $i$ $3$-faces. 
A $5^+$-vertex $u$ is {\it responsible for} a bad vertex $v$ if $u$ is either $1$-responsible or $2$-responsible for $v$. 
A $4$-vertex $u$ is {\it responsible for} an adjacent bad vertex $v$ if $uv$ is incident to two $3$-faces. 
Note that a vertex might be responsible for several bad vertices, and several vertices might be responsible for the same bad vertex. 

\begin{corollary}\label{responsibleVertices}
Each vertex $v$ is responsible for at most $\lfloor{d(v)\over 2}\rfloor$ bad vertices. 
\end{corollary}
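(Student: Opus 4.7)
I would split the proof into two cases based on $d(v)$.

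For $d(v)=4$, I would argue that $v$ can be responsible for at most one bad neighbor, which is well within $\lfloor 4/2\rfloor=2$. If $v$ were responsible for two bad neighbors $u$ and $u'$, then both $vu$ and $vu'$ are incident to two $3$-faces each. Since Corollary~\ref{bad-adjacent} forbids $u$ and $u'$ from being adjacent, these four $3$-faces are distinct and must exhaust all four faces at $v$, making $v$ itself $4$-bad. But then $v$ is a bad vertex adjacent to the bad vertex $u$, contradicting Corollary~\ref{bad-adjacent}.

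For $d(v)\geq 5$, let $B$ denote the set of bad neighbors for which $v$ is responsible, label $v$'s neighbors cyclically $u_1,\dots,u_d$, and let $f_i$ be the face between $vu_i$ and $vu_{i+1}$. First, Corollary~\ref{bad-adjacent} implies that if two cyclically consecutive neighbors $u_i,u_{i+1}$ are both in $B$, then $f_i$ cannot be a $3$-face, and the responsibility of $u_i$ and $u_{i+1}$ then forces $f_{i-1}$ and $f_{i+1}$ to be $3$-faces. Iterating this rules out three cyclically consecutive members of $B$ (the middle one would fail responsibility), so $B$ decomposes into ``singleton'' and ``pair'' blocks in the cyclic arrangement.

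The key step bounds the density of pairs. For any pair $(u_i,u_{i+1})\subseteq B$, Claim~\ref{faces303} applied to $f_{i-1},f_i,f_{i+1}$ gives $d(f_i)\geq 6$, and both $u_i,u_{i+1}$ are forced to be $3$-bad with $f_i$ as their non-$3$-face. By Corollary~\ref{3bad-bigface}, $f_i$ is either a great face or a degenerate $6$-face, so Corollary~\ref{3bad-7} (respectively Corollary~\ref{3bad-6}) applied to $u_i$ forces $f_{i-2}$ to be a $6^+$-face, and symmetrically $f_{i+2}$. In particular, two pairs $(u_i,u_{i+1})$ and $(u_{i+3},u_{i+4})$ cannot share the separator $u_{i+2}$, as the second pair's responsibility of $u_{i+3}$ would require $f_{i+2}$ to be a $3$-face (since $f_{i+3}$ is non-$3$), contradicting the $6^+$-face constraint on $f_{i+2}$. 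An analogous argument, using Corollary~\ref{3bad-7} applied to each singleton in $B$ to push further $6^+$-constraints down the cyclic order, rules out the dense singleton--pair and singleton--singleton--pair arrangements that would otherwise push $|B|$ above $\lfloor d/2\rfloor$.

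The main obstacle is the cyclic counting: once the forbidden block configurations are catalogued, I need to verify that in every allowable cyclic arrangement of singletons and pairs (with some shared separators permitted), each bad vertex ``consumes'' at least two cyclic positions at $v$ on average. The pair footprint $(f_{i-2},\dots,f_{i+2})=(6^+,3,6^+,3,6^+)$ combined with the $f_{k+1}\geq 6$ constraint from each singleton is the key structural ingredient that makes this bound hold, yielding $2|B|\leq d$ and hence $|B|\leq\lfloor d/2\rfloor$.
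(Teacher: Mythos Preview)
Your approach is significantly more elaborate than the paper's, and the final step is left unfinished: you explicitly say ``I need to verify'' the cyclic counting, and you never carry out that verification. The block decomposition into singletons and pairs, the footprint analysis for pairs, and the case analysis on shared separators are all avoidable.

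The paper's argument is a two-line injection. For each bad $u$ that $v$ is responsible for, pick a $3$-face $vux$; then $x$ is not bad (Corollary~\ref{bad-adjacent}), and the face on the other side of the edge $vx$ is not a $3$-face (Claim~\ref{4bad} for $4$-bad $u$, Claim~\ref{3bad-triangle} for $3$-bad $u$). This last fact is exactly what makes the map $u\mapsto x$ injective: if two bad neighbors $u_{m-1}$ and $u_{m+1}$ both tried to claim $x=u_m$, then both $f_{m-1}$ and $f_m$ would have to be $3$-faces, but the first being a $3$-face incident to the bad vertex $u_{m-1}$ forces the second not to be a $3$-face. Hence $|B|\le d(v)-|B|$, and the bound follows immediately, with no need to split into singletons and pairs or to treat $d(v)=4$ separately.

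Your $d(v)=4$ case is correct (and proves more than needed). For $d(v)\ge 5$, your structural observations are valid as far as they go, but you should recognize that the single fact ``the face past $x$ is not a $3$-face'' already encodes everything your block analysis is trying to establish. In particular, invoking Claim~\ref{faces303} and Corollaries~\ref{3bad-bigface}, \ref{3bad-6}, \ref{3bad-7} to get the $(6^+,3,6^+,3,6^+)$ footprint is overkill: the weaker ``not a $3$-face'' suffices, and it holds for both $3$-bad and $4$-bad singletons (your proposal only cites Corollary~\ref{3bad-7} for singletons, which does not cover the $4$-bad case).
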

\sout{
\begin{proof}
If $v$ is responsible for a vertex $u$,
one of the two faces incident to the edge $vu$ must be a $3$-face $vux$. 
By Corollary~\ref{bad-adjacent}, $x$ cannot be a bad vertex.
By Claim~\ref{4bad} and Claim~\ref{3bad-triangle}, the face incident to $xv$ that is not $xvu$ has length at least $6$, and this finishes the proof. 
\end{proof}
}

\begin{corollary}\label{2responsibleVertices}
Each vertex $v$ is $2$-responsible for at most $\lfloor {d(v)\over 3}\rfloor$ bad vertices.
\end{corollary}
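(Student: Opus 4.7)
The plan is to adapt the counting argument from Corollary~\ref{responsibleVertices}, using the stronger constraint that $2$-responsibility imposes two $3$-faces on each responsible-for edge at $v$ (rather than just one). Label the neighbors of $v$ cyclically as $v_1,\ldots,v_{d(v)}$, and let $f_i$ denote the face between $vv_i$ and $vv_{i+1}$, with indices taken modulo $d(v)$. By the definition of $2$-responsibility, if $v_i$ is a $2$-responsible-for bad neighbor of $v$, then both $f_{i-1} = vv_{i-1}v_i$ and $f_i = vv_iv_{i+1}$ are $3$-faces, and $v_i$ is bad.

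First I would observe that the flanking faces $f_{i-2}$ and $f_{i+1}$ cannot be $3$-faces. Applying Claim~\ref{4bad} (if $v_i$ is $4$-bad) or Claim~\ref{3bad-triangle} (if $v_i$ is $3$-bad) to the bad vertex $v_i$ and the $3$-face $f_{i-1}$, the face $f_{i-2}$ adjacent to $f_{i-1}$ across the edge $vv_{i-1}$ must be either great or non-$3$, provided it is not incident to $v_i$. If $f_{i-2}$ were a $3$-face incident to $v_i$, its three vertices would be $v, v_{i-1}, v_i$, so $f_{i-2} = f_{i-1}$, which is impossible; hence the Claim applies and $f_{i-2}$ is a non-$3$-face. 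The symmetric reasoning with $f_i$ forces $f_{i+1}$ to be a non-$3$-face as well.

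Next I would rule out two $2$-responsible neighbors $v_i, v_j$ being cyclically close. If $j = i+1$, then $v_i$ and $v_j$ both lie on the $3$-face $f_i$ and are therefore adjacent; since both are bad, this contradicts Corollary~\ref{bad-adjacent}. If $j = i+2$, then the face $f_i = f_{j-2}$ is simultaneously required to be a $3$-face (by $v_i$'s $2$-responsibility) and to be a non-$3$-face (by the previous paragraph applied at $v_j$), a direct contradiction.

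Therefore the positions of the $2$-responsible-for bad neighbors around $v$ form a subset of $\{1,\ldots,d(v)\}$ whose pairwise cyclic distances are all at least $3$, and consequently there are at most $\lfloor d(v)/3 \rfloor$ of them. There is no real obstacle here; the argument is essentially bookkeeping once the constraints on $f_{i-2}$ and $f_{i+1}$ have been extracted, and the only mild subtlety is verifying that the earlier Claims genuinely apply to the flanking faces, which reduces to noting that $G$ is simple and that $v_i$ is already used as a name by $f_{i-1}$.
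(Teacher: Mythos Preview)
Your argument is correct and follows essentially the same route as the paper's proof: use Claims~\ref{4bad} and~\ref{3bad-triangle} to show that the faces $f_{i-2}$ and $f_{i+1}$ flanking a $2$-responsible-for neighbor $v_i$ cannot be $3$-faces, and conclude that any two such neighbors are at cyclic distance at least $3$. One tiny wording slip: when you check that a $3$-face $f_{i-2}$ is not incident to $v_i$, recall that $f_{i-2}$ by definition contains the edge $vv_{i-2}$, so its vertex set is $\{v,v_{i-2},v_{i-1}\}$ rather than $\{v,v_{i-1},v_i\}$; the conclusion is unaffected.
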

\sout{
\begin{proof}
Let $x_1, \ldots, x_{d(x)}$ be the neighbors of $v$ in cyclic order. 
If $v$ is $2$-responsible for $x_i$, then both faces incident to the edge $vx_i$ must be $3$-faces. 
By Claim~\ref{4bad} and Claim~\ref{3bad-triangle}, the face incident to $v, x_{i+1}, x_{i+2}$ cannot be a $3$-face, thus, $v$ cannot be $2$-responsible for $x_{i+1}$ and $x_{i+2}$. 
By the same argument, $v$ cannot be $2$-responsible for $x_{i-1}, x_{i-2}$. 
\end{proof}
}

\section{Discharging}

Recall that an embedding of $G$ was fixed, and let $F(G)$ be the set of faces of $G$.
In this section, we will prove that $G$ cannot exist by assigning an \emph{initial charge} $\mu(z)$ to each $z\in V(G) \cup F(G)$, and then applying a discharging procedure to end up with {\it final charge} $\mu^*(z)$ at $z$.
We prove that the final charge has positive total sum, whereas the initial charge sum is at most zero.
The discharging process will preserve the total charge sum, and hence we find a contradiction to conclude that $G$ does not exist.

For each vertex $v\in V(G)$, let $\mu(v)=d(v)-6$, and for each face $f\in F(G)$, let $\mu(f)=2d(f)-6$. 
The total initial charge is at most zero since
\begin{align*}
\sum_{z\in V(G)\cup F(G)} \mu(z)
	=\sum_{v\in V(G)} (d(v)-6)+\sum_{f\in V(F)} (2d(f)-6) 
	=6|E(G)|-6|V(G)|-6|F(G)|
	\leq 0.
\end{align*}
The final equality holds by Euler's formula.

The rest of this section will prove that the sum of the final charge after the discharging phase is positive. 


Recall that a $4$-vertex $v$ is {\it special} if $v$ is incident to a $4$-face and exactly two $3$-faces.
A $4$-vertex $v$ is {\it bad} if is incident to three or four $3$-faces; a vertex is {\it good} if it is neither bad nor special.
For $i\in\{1, 2\}$, a $5^+$-vertex $u$ is {\it $i$-responsible for} an adjacent bad vertex $v$ if $uv$ is incident to $i$ $3$-faces. 
A $5^+$-vertex $u$ is {\it responsible for} a vertex $v$ if $u$ is either $1$-responsible or $2$-responsible for $v$. 
A $4$-vertex $u$ is {\it responsible for} an adjacent bad vertex $v$ if $uv$ is incident to two $3$-faces. 
A face $f$ is {\it great} if $d(f)\geq 7$. 

Here are the discharging rules:

\begin{enumerate}[(R1)]
\item Each $4$-face sends charge 
$1$ to each incident special vertex, 
${1\over 5}$ to each incident $5^+$-vertex, 
and distributes its remaining initial charge uniformly to each incident non-special $4$-vertex.


\item Each $5$-face sends charge 
${4\over 7}$ to each incident $5^+$-vertex 
and distributes its remaining initial charge uniformly to each incident $4$-vertex. 

\item Each $6^+$-face distributes its initial charge uniformly to each incident vertex. 

\item Each good $4$-vertex $u$ sends its excess charge to each vertex $v$ where $u$ is responsible for $v$.

\item Each $5^+$-vertex $u$ sends charge $1$ to each vertex $v$ where $u$ is $2$-responsible for $v$. 

\item Each $5^+$-vertex $u$ sends charge $2\over 7$ to each vertex $v$ where $u$ is $1$-responsible for $v$. 

\end{enumerate}

\begin{figure}[h]
\centering
\includegraphics{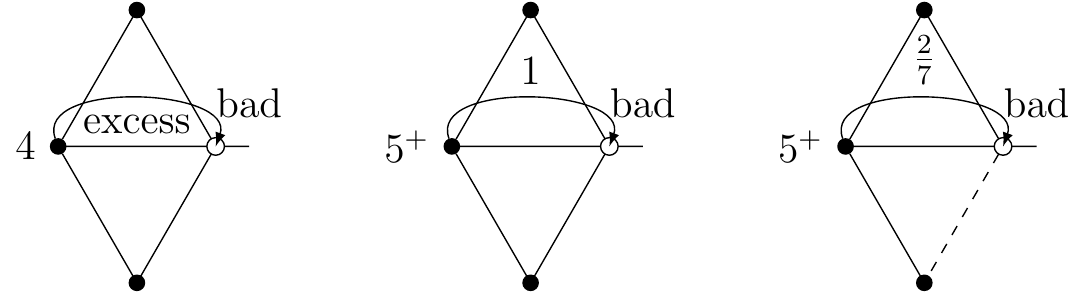}
  \caption{Discharging Rules}
  \label{fig:tikz:fig-rules}
\end{figure}

We will first show that each $4$-face has nonnegative final charge.
It is trivial that each $6^+$-face has nonnegative final charge. 
Then, we will show that each vertex has nonnegative final charge.
Moreover, we will show that each bad vertex and each $5^+$-vertex that is not adjacent to a bad vertex has positive final charge.

\begin{claim}\label{4face}
Each $4$-face $f$ has nonnegative final charge.
Moreover, $f$ sends charge at least $3\over 5$ to each incident $4$-vertex if $f$ is not incident to a special vertex, and $f$ sends charge at least $2\over 5$ to each incident non-special $4$-vertex if $f$ is incident to a special vertex.
\end{claim}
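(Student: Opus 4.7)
The plan is to analyse the final charge of $f$ by tracking rule~(R1) and verifying both non-negativity and the per-vertex lower bound through a short case split. Let $s$ be the number of special vertices on $f$, $k$ the number of incident $5^+$-vertices, and $m=4-s-k$ the number of incident non-special $4$-vertices; Claim~\ref{4face-oneSpecial} gives $s\in\{0,1\}$. Rule~(R1) removes $s+k/5$ from the initial charge $\mu(f)=2$, and either retains the residue $2-s-k/5$ (if $m=0$) or splits it evenly among the $m$ non-special $4$-vertices (if $m>0$).

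When $m=0$ the final charge of $f$ is $(6-4s)/5\geq 0$ and the per-vertex clause is vacuous; otherwise $f$ has final charge $0$ and each non-special $4$-vertex receives $(2-s-k/5)/m$. A direct computation shows that $(2-k/5)/(4-k)\geq 3/5$ (for $s=0$) and $(1-k/5)/(3-k)\geq 2/5$ (for $s=1$) each reduce to $k\geq 1$. The claim therefore holds in every case except \emph{(A)} $s=0,\ k=0,\ m=4$ (all four vertices of $f$ are non-special $4$-vertices) and \emph{(B)} $s=1,\ k=0,\ m=3$ (one special plus three non-special $4$-vertices on $f$, no $5^+$-vertex), both of which I must rule out.

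For (A), the vertex set $S$ of $f$ is $4$-regular and $G[S]$ contains the bounding $4$-cycle of $f$. If $G[S]$ is exactly this $4$-cycle, Lemma~\ref{reducible}$(i)$ applies; with one chord added, Lemma~\ref{reducible}$(ii)$ applies. The remaining subcase $G[S]=K_4$ is not covered by Lemma~\ref{reducible} since $K_4$ fails degree-choosability, and must be ruled out by a direct argument using that each vertex of $S$ has a unique external neighbour, together with the $K^-_5$- and $6$-cycle-free hypotheses, to extract a forbidden configuration.

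For (B), write $v$ for the special vertex on $f$. The corollary stating that the two $3$-faces at a special vertex are adjacent, combined with Claim~\ref{faces3340}, forces the face sequence at $v$ to be $3,3,4(=f),\geq 6$. Choose neighbours $w_1,w_2,w_3,w_4$ of $v$ in cyclic order with $vw_1w_2, vw_2w_3$ the $3$-faces and $f=vw_3xw_4$. If $x\notin\{w_1,w_2\}$ then $vw_1w_2w_3xw_4v$ is a $6$-cycle, contradicting the hypothesis. The boundary options $x=w_1$ and $x=w_2$ collapse $G[S]$ either into a $4$-cycle with a chord (killed by Lemma~\ref{reducible}$(ii)$) or force $x$ to become a special or $3$-bad vertex in a way contradicting either the non-special status of $x$ or Corollary~\ref{3bad-bigface}. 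The chief obstacle is the $K_4$ subcase of (A): it sidesteps both Lemma~\ref{reducible} and the preceding structural claims, so an ad hoc combinatorial chase through the external neighbours of $S$ is needed to produce the forbidden $6$-cycle or $K^-_5$.
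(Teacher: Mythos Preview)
The paper's proof is much shorter than your route: it simply combines Claim~\ref{4face-oneSpecial} (at most one special vertex) with the assertion that Lemma~\ref{reducible} forces at most three $4$-vertices on $f$, and then checks the two extremal distributions $2-3\cdot\tfrac35-\tfrac15=0$ and $2-1-2\cdot\tfrac25-\tfrac15=0$. This dispatches your (A) and (B) simultaneously, since in each of them all four boundary vertices have degree $4$. Your separate analysis of (B) is therefore redundant once that single bound is in hand; its only added value is that, by forcing $x=w_1$ and noting that $w_3w_4\in E(G)$ would put $K_5^-$ inside $G[N[v]]$, one sees that (B) can never land in the $K_4$ subcase---a point your write-up asserts but does not actually argue.

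You are right that the $K_4$ subcase of (A) is not covered by Lemma~\ref{reducible}(i)--(ii): those items are proved via Theorem~\ref{deg-choos}, and $K_4$, being complete, is not degree-choosable. The paper glosses over this, invoking Lemma~\ref{reducible} without comment; you at least flag the issue, but ``an ad hoc combinatorial chase \ldots\ is needed'' is a hope, not a proof. This is the genuine gap in your proposal: until the $4$-regular $K_4$ on a $4$-face is actually excluded---either by exhibiting a forced $6$-cycle or $K_5^-$ from the face structure, or by a different reducibility argument---the proof remains incomplete, and your outline gives no indication of how that chase would terminate.
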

\sout{
\begin{proof}
By Claim~\ref{4face-oneSpecial}, $f$ is incident to at most one special vertex. 
By Lemma~\ref{reducible}, there are at most three vertices of degree $4$ incident to $f$. 
Since ${1\over 5}<{2\over 5}<{3\over 5}$, the worst case is when $f$ has many incident $4$-vertices.
If $f$ is not incident to a special vertex, then $\mu^*(f)\geq 2-3\cdot{3\over 5}-{1\over 5}\geq 0$.
If $f$ is incident to a special vertex, then $\mu^*(f)\geq 2-1-2\cdot{2\over 5}-{1\over 5}\geq 0$.
\end{proof}
}

\begin{claim}\label{5face}
Each $5$-face $f$ has nonnegative final charge.
Moreover, $f$ sends charge at least $6\over 7$ to each incident $4$-vertex if $f$ is incident to a $5^+$-vertex, and $f$ sends charge at least $4\over 5$ to each incident $4$-vertex if $f$ is not incident to a $5^+$-vertex.
\end{claim}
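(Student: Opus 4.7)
The plan is to observe that a $5$-face $f$ has initial charge $\mu(f) = 2\cdot 5 - 6 = 4$, and that its boundary walk consists of five distinct vertices. The latter follows from Lemma~\ref{reducible}$(iv)$ together with $\delta(G) \geq 4$: any vertex repetition on the boundary walk of a $5$-face forces a pendant edge traversed twice and hence a vertex of degree $1$, which is forbidden. Consequently every vertex incident to $f$ is either a $4$-vertex or a $5^+$-vertex.

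Let $k$ denote the number of $5^+$-vertices incident to $f$, so that $5-k$ is the number of incident $4$-vertices. By rule~(R2), $f$ sends out total charge $4k/7$ to the $5^+$-vertices and then distributes the remaining $4 - 4k/7 = 4(7-k)/7$ uniformly among the $5-k$ incident $4$-vertices. I will then split on $k$.

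If $k = 0$, every incident vertex is a $4$-vertex, each receives $4/5$, and $\mu^*(f) = 4 - 5 \cdot (4/5) = 0$, matching the claimed bound of $4/5$. If $1 \leq k \leq 4$, each incident $4$-vertex receives $\frac{4(7-k)}{7(5-k)}$, and the desired bound $\frac{4(7-k)}{7(5-k)} \geq \frac{6}{7}$ reduces to $4(7-k) \geq 6(5-k)$, i.e.\ $2k \geq 2$, which holds; the remaining charge works out to $\mu^*(f) = 0$. If $k = 5$, there is no incident $4$-vertex to consider and $\mu^*(f) = 4 - 5\cdot(4/7) = 8/7 > 0$.

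No serious obstacle is anticipated: once the distinctness of the boundary vertices is established via Lemma~\ref{reducible}$(iv)$, the rest is a direct case split on $k$ together with the single elementary inequality $4(7-k) \geq 6(5-k)$, which is tight precisely at $k=1$ and accounts for the stated lower bound of $6/7$.
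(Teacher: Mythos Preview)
Your proof is correct and follows essentially the same approach as the paper's: both identify that the minimum charge to a $4$-vertex occurs when the number $k$ of incident $5^+$-vertices is as small as possible, and check that this yields $6/7$ when $k=1$ and $4/5$ when $k=0$. Your version is more explicit (parametrizing by $k$ and verifying the inequality $4(7-k)\geq 6(5-k)$), and you additionally justify that the five boundary vertices are distinct via Lemma~\ref{reducible}$(iv)$, a point the paper leaves implicit.
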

\sout{
\begin{proof}
Since ${4\over 7}<{4\over 5}<{6\over 7}$, the worst case is when $f$ has many incident $4$-vertices.
If $f$ is incident to a $5^+$-vertex, then $\mu^*(f)\geq 4-4\cdot{6\over 7}-{4\over 7}\geq 0$.
If $f$ is not incident to a $5^+$-vertex, then $\mu^*(f)\geq 4-5\cdot{4\over 5}\geq 0$.
\end{proof}
}

Note that each (degenerate) $6$-face sends charge $1$ to each incident vertex, 
and each great face $f$ sends charge ${\mu(f)\over d(f)}={2d(f)-6\over d(f)}\geq {8\over 7}$ to each incident vertex. 

\begin{claim}\label{6vertex}
Each $6^+$-vertex $v$ has positive final charge. 
Moreover, if $v$ is not adjacent to a bad vertex, then it has positive final charge.
\end{claim}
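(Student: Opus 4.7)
The plan is to bound the final charge $\mu^*(v)$ of a $6^+$-vertex $v$ by $(d-6) + (\text{charge received}) - (\text{charge sent out})$, where $d := d(v) \geq 6$, and to show this is strictly positive. Writing $k$ and $\ell$ for the number of bad neighbors of $v$ for which $v$ is $2$- and $1$-responsible respectively, rules (R5) and (R6) send out exactly $k + \tfrac{2}{7}\ell$, while Corollaries~\ref{2responsibleVertices} and \ref{responsibleVertices} bound $k \leq \lfloor d/3 \rfloor$ and $k + \ell \leq \lfloor d/2 \rfloor$. Each incident face of length $4, 5, 6$, or $\geq 7$ contributes at least $\tfrac{1}{5}$, $\tfrac{4}{7}$, $1$, or $\tfrac{8}{7}$ respectively to $v$ via (R1)-(R3), while a $3$-face contributes $0$.

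I would first handle the ``no bad neighbor'' subcase, where rules (R5) and (R6) are inactive. If $d \geq 7$, then $\mu^*(v) \geq d - 6 \geq 1$ is immediate from the initial charge alone. If $d = 6$, then not all six incident faces of $v$ can be $3$-faces, for otherwise the cyclic neighbors $u_1, \ldots, u_6$ of $v$ would form a $6$-cycle in $G$, contradicting the hypothesis; thus $v$ receives at least $\tfrac{1}{5}$ from some $4^+$-face, yielding $\mu^*(v) > 0$. For the main case $k + \ell \geq 1$, the key structural inputs are: (i) by the proof of Corollary~\ref{2responsibleVertices}, whenever $v$ is $2$-responsible for $u_i$, the face at $v$ just past the forced pair $f_{i-1}, f_i$ of $3$-faces is itself not a $3$-face; (ii) by Corollary~\ref{bad-adjacent}, the $3$-faces at $v$ used by distinct bad neighbors are pairwise disjoint; (iii) by Claim~\ref{faces303}, any non-$3$-face at $v$ flanked by $3$-faces on both sides is in fact a $6^+$-face and contributes at least $1$. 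A gap-by-gap bookkeeping of the incident $3$-face runs and intervening non-$3$-face gaps, using (i)-(iii), then yields a lower bound on the incoming charge that strictly exceeds $k + \tfrac{2}{7}\ell$ in every situation except one tight subcase.

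The main obstacle is the subcase $d = 6$, $k = 2$, $\ell = 0$: the two $2$-responsible bad neighbors (say $u_1$ and $u_4$) consume four of $v$'s six incident face slots as two disjoint pairs of $3$-faces, and the remaining two faces $f_2$ and $f_5$ are forced to be $6^+$-faces by (i) and Claim~\ref{faces303}. Straightforward bookkeeping only gives $\mu^*(v) \geq 0$, so upgrading to strict positivity requires refining at least one flanking face from a mere $6$-face to a great face. I would do this by a subcase split on the nature of the bad neighbors: if either $u_1$ or $u_4$ is $4$-bad, Claim~\ref{4bad} forces a flanking face to be great ($\geq 7$, charge $\geq \tfrac{8}{7}$), yielding $\mu^*(v) > 0$; if both are $3$-bad, Corollary~\ref{3bad-bigface} combined with Corollaries~\ref{3bad-6} and \ref{3bad-7} forces a flank to be great unless a very tightly constrained configuration appears, which I would then rule out by combining Proposition~\ref{degen6} on degenerate $6$-faces with the no-$6$-cycle and no-$K_5^-$ hypotheses. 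This final reconciliation of the tight configuration with the forbidden structures of Section~2 is the delicate crux of the proof.
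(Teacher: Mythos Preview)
Your proposal aims to prove strictly more than the paper does, and more than is needed. Look at the parallel Claim~\ref{5vertex}: it reads ``Each $5$-vertex $v$ has \emph{nonnegative} final charge. Moreover, if $v$ is not adjacent to a bad vertex, then it has positive final charge.'' The first sentence of Claim~\ref{6vertex} is a typo; it should also say ``nonnegative''. The paper's own proof confirms this: it only derives $\mu^*(v)\geq \tfrac{2r}{2}-1\cdot r\geq 0$ when $v$ is responsible for $r$ bad vertices, and then separately handles the no-bad-neighbor case to get strict positivity. The global argument at the end of Section~3 needs exactly this pair of facts (everything nonnegative; bad vertices and $5^+$-vertices with no bad neighbor strictly positive), not strict positivity for all $6^+$-vertices.

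Consequently, your entire ``tight subcase'' analysis for $d=6$, $k=2$, $\ell=0$ is unnecessary. The paper's argument there is one line: for each bad vertex $v$ is responsible for, Claim~\ref{4bad}, Corollary~\ref{3bad-6}, and Corollary~\ref{3bad-7} produce two $6^+$-faces incident to $v$, so with $r$ responsibilities $v$ receives at least $r$ and sends at most $r$ (each responsibility costs at most $1$ via (R5)/(R6)), giving $\mu^*(v)\geq d-6\geq 0$. Your no-bad-neighbor paragraph matches the paper's. As a side remark, your upgrade argument in the tight case is not obviously airtight either: invoking Claim~\ref{4bad} to force $f_2$ great requires knowing $f_2$ is not incident to the $4$-bad neighbor $u_1$, which you have not justified, and the $3$-bad branch leans on a ``tightly constrained configuration'' you do not actually rule out. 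But since none of this is needed, the simplest fix is to drop it and prove only $\mu^*(v)\geq 0$ in general.
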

\sout{
\begin{proof}
By Claim~\ref{4bad}, Corollary~\ref{3bad-6}, and Corollary~\ref{3bad-7}, for each vertex $v$ is responsible for, there exist two faces of length at least $6$ incident to $v$ that will each send charge at least $1$ to $v$.
If $v$ is responsible for $r$ vertices, then, $\mu^*(v)\geq {2\cdot r\over 2}-1\cdot r\geq 0$.

If $v$ is not adjacent to a bad vertex, then $v$ is not responsible for any vertex. 
Also $v$ cannot be incident to only $3$-faces since this would create a $6$-cycle. 
Now, since $v$ is incident to a $4^+$-face, $v$ has positive final charge. 
\end{proof}
}

\begin{claim}\label{5vertex}
Each $5$-vertex $v$ has nonnegative final charge. 
Moreover, if $v$ is not adjacent to a bad vertex, then it has positive final charge.
\end{claim}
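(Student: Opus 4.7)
The plan is to do a case analysis on $k$, the number of $3$-faces incident to $v$, enumerating the allowed cyclic face-degree patterns using the structural claims. Let $u_1, \ldots, u_5$ be $v$'s neighbors in cyclic order with $f_i$ the face between edges $v u_i$ and $v u_{i+1}$. Since $\mu(v) = -1$, by rules (R1)--(R3) $v$ receives at least $\tfrac{1}{5}$ per incident $4$-face, $\tfrac{4}{7}$ per $5$-face, $1$ per $6$-face, and $\tfrac{8}{7}$ per great face; by (R5)--(R6) $v$ sends $1$ per $2$-responsibility and $\tfrac{2}{7}$ per $1$-responsibility, with at most two responsibilities total and at most one of type $2$ by Corollaries~\ref{responsibleVertices} and \ref{2responsibleVertices}.

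The main lever is the ``opposite face'' observation: if a neighbor $u_j$ of $v$ is bad and the $3$-face $f = v u_{j-1} u_j$ witnesses a responsibility, then the face $f_{j-2}$ at edge $v u_{j-1}$ other than $f$ is not incident to $u_j$ but is adjacent to $f$. Hence Claim~\ref{4bad}, Corollary~\ref{3bad-6}, or Corollary~\ref{3bad-7} forces $f_{j-2}$ to have length at least $6$, and to be great when $u_j$ is $4$-bad or $3$-bad with a degenerate $6$-face. In particular, if $f_{j-2}$ is itself a $3$-face, then $u_j$ cannot be bad, which rules out most candidate bad neighbors in each configuration; otherwise, the observation matches every responsibility to a large face of $v$ contributing at least $1$ to $v$'s received charge.

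For $k = 0$ and $k = 1$, Claims~\ref{faces44444} and \ref{faces4434} force a $5^+$-face, and an easy computation finishes the argument. For $k = 2$, the two $3$-faces are either cyclically adjacent (Claim~\ref{faces3340} gives a large adjacent face) or non-adjacent (Claim~\ref{faces303} forces a separating face of length at least $6$). For $k = 3$, the patterns are $(3,3,3,*,*)$ and $(3,3,*,3,*)$; Corollary~\ref{faces3303}, Claim~\ref{faces3340}, and repeated use of Claim~\ref{faces303} force at least two of $v$'s non-$3$-faces to have length at least $6$. Cases $k = 4, 5$ follow similarly, with most candidate bad neighbors eliminated by the opposite-face observation. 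The positivity clause is immediate in each case when $v$ has no bad neighbor: $v$ sends nothing, and the received charge strictly exceeds $1$ in every allowed configuration.

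The main obstacle is the subcase in the $k = 3$ pattern $(3,3,*,3,*)$ where $v$ is $2$-responsible for a bad neighbor $u$ and simultaneously $1$-responsible for a second bad neighbor $u'$: here $v$ sends $\tfrac{9}{7}$, so the forced large faces must contribute at least $\tfrac{16}{7}$ in total. A $2$-responsibility forces the non-$3$-face of $u$ to be internal to $u$ (not incident to $v$); if $u$ is $4$-bad or $3$-bad with an internal degenerate $6$-face, Claim~\ref{4bad} or Corollary~\ref{3bad-6} upgrades both opposite faces of $v$ from length $\geq 6$ to great, which suffices. The remaining subcase has $u$ being $3$-bad with an internal great face; here I will combine the constraints from $u'$ with Corollary~\ref{3bad-7-44} and the restriction from Proposition~\ref{degen6} that any $6$-face of $G$ must be degenerate, to upgrade a remaining opposite face of $v$ to length $\geq 7$ and thereby achieve nonnegativity.
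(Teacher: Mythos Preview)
Your overall organization (case analysis on the number $k$ of incident $3$-faces, with the ``opposite face'' observation handling responsibilities) is sound and parallels the paper's logic, but there is a genuine gap in the subcase you yourself flag as the main obstacle.

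In the pattern $(3,3,*,3,*)$ with a $2$-responsibility for $u$ and a $1$-responsibility for $u'$, you correctly compute that the two non-$3$-faces $f_3,f_5$ must together contribute at least $\tfrac{16}{7}$. When $u$ is $3$-bad with an internal great face, Corollary~\ref{3bad-7} gives only $d(f_3),d(f_5)\geq 6$, and your plan is to upgrade \emph{one} of them to length $\geq 7$ via $u'$. But one great face and one $6$-face yield only $\tfrac{8}{7}+1=\tfrac{15}{7}<\tfrac{16}{7}$, so a single upgrade is insufficient: you need \emph{both} $f_3$ and $f_5$ to be great. Your proposed tool, Corollary~\ref{3bad-7-44}, also does not apply here, since one of the two ``middle'' neighbors of the $3$-bad vertex $u$ is $v$ itself, which has degree~$5$, not~$4$.

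The fix is already in your hands: you cite Corollary~\ref{faces3303} in your $k=3$ discussion but extract only ``length $\geq 6$'' from it. In fact it gives $\geq 7$. Applied to the consecutive quadruples $(f_1,f_2,f_3,f_4)$ and $(f_2,f_1,f_5,f_4)$ it yields $d(f_3)\geq 7$ and $d(f_5)\geq 7$ unconditionally, giving exactly $\tfrac{16}{7}$ with no case split on the type of $u$. This is precisely how the paper handles the $2$-responsible plus $1$-responsible case in a single line. Once you use the full strength of Corollary~\ref{faces3303}, your entire case analysis on the type of $u$ becomes unnecessary and the argument closes.
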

\sout{
\begin{proof}
By Claim~\ref{responsibleVertices}, $v$ is responsible for at most two vertices, and
by Claim~\ref{2responsibleVertices}, $v$ is $2$-responsible for at most one vertex. 
If $v$ is $2$-responsible for a vertex and $1$-responsible for a vertex, then there must be two great faces incident to $v$ by Corollary~\ref{faces3303}.
Thus, $\mu^*(v)\geq -1-1-{2\over 7}+2\cdot{8\over 7}\geq0$. 
If $v$ is $2$-responsible for a vertex and is not $1$-responsible for any vertex, then $v$ is incident to at least two $6^+$-faces by Claim~\ref{4bad}, Corollary~\ref{3bad-6}, and Corollary~\ref{3bad-7}.
Thus, $\mu^*(v)\geq -1-1+2\cdot1\geq 0$. 

If $v$ is not $2$-responsible for any vertex, then $v$ is $1$-responsible for at most two vertices.
If $v$ is $1$-responsible for at least one vertex, then $v$ is incident to at least two $6^+$-faces, by Claim~\ref{4bad}, Corollary~\ref{3bad-6}, and Corollary~\ref{3bad-7}.
Thus, $\mu^*(v)\geq -1-2\cdot{2\over 7}+2\cdot 1>0$.

The only case left is when $v$ is not responsible for any vertex.
If there are three consecutive faces $f_1, f_2, f_3$ where $d(f_1)=d(f_3)=3\neq d(f_2)$, then $d(f_2)\geq 6$ by Claim~\ref{faces303}. 
Since the other two faces cannot be both $3$-faces, $\mu^*(v)\geq -1+1+{1\over 5}>0$.

If there are consecutive faces $f_1, f_2, f_3, f_4$ where $d(f_1)=d(f_2)=3$ and $d(f_3)=4$, then $d(f_4)\geq 6$ by Claim~\ref{faces3340}. 
Thus, $\mu^*(v)\geq -1+1+{1\over 5}>0$.
Thus, given consecutive faces $f_0, f_1, f_2, f_3$ where $d(f_1)=d(f_2)=3$, then both $d(f_0), d(f_3)\geq 5$.
Thus, $\mu^*(v)\geq -1+2\cdot{4\over 7}>0$.

Now, let $v$ be incident to at most one $3$-face.
If $v$ is incident to one $3$-face, then by Claim~\ref{faces4434}, there exists a $5^+$-face incident to $v$. 
Thus, $\mu^*(v)\geq -1+{4\over 7}+3\cdot{1\over 5}>0$.
Note that $v$ cannot be incident to only $4$-faces by Claim~\ref{faces44444}.
Thus, $v$ is incident to at least one $5^+$-face and at four $4^+$-faces. 
Thus, $\mu^*(v)\geq -1+4\cdot{2\over 5}+{4\over 5}>0$.

Note that if $v$ is not adjacent to a bad vertex, then $v$ is not responsible for any vertex.
Also $v$ cannot be incident to only $3$-faces since this would create a $6$-cycle. 
Now, since $v$ is incident to a $4^+$-face, $v$ has positive final charge. 
\end{proof}
}

\begin{claim}\label{good4vertex}
Each good $4$-vertex $v$ has nonnegative final charge.
\end{claim}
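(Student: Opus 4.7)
The plan is to show that the total charge $v$ receives from its four incident faces is at least $2$. Since $\mu(v) = -2$ and (R4) is the only rule that removes charge from $v$ (sending out only positive excess to adjacent bad vertices), this will give $\mu^*(v) \geq 0$. Being good means that $v$ is incident to at most two $3$-faces, and if it is incident to exactly two $3$-faces then it is non-special and hence incident to no $4$-face. I split into three cases by the number of incident $3$-faces.

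If $v$ has no $3$-face, then either all four incident faces are $4$-faces, or at least one incident face is a $5^+$-face and the remaining three are $4^+$-faces. In the former subcase, Claim~\ref{4vertex-4444} guarantees at least two incident $4$-faces not incident to a special vertex, giving a total of at least $2\cdot\frac{3}{5} + 2\cdot\frac{2}{5} = 2$ using Claim~\ref{4face}; in the latter, Claim~\ref{4face} and Claim~\ref{5face} yield at least $\frac{4}{5} + 3\cdot\frac{2}{5} = 2$. In both situations $v$ is responsible for no bad vertex.

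If $v$ has exactly one $3$-face $f_1$ with $f_2, f_3, f_4$ continuing cyclically, $v$ is again responsible for no bad vertex because no edge at $v$ is incident to two $3$-faces. When $d(f_3) = 4$, Claim~\ref{4vertex-304} (legitimate since $v$ is non-special) pins the configuration into one of its two options, and the extra $\frac{3}{5}$ coming from ``$f_3$ is not incident to a special vertex'' in option (ii) is precisely what is needed to reach $2$. When $d(f_3) \geq 5$, the tight subcase is when both $f_2$ and $f_4$ are $4$-faces; here Claim~\ref{faces3454} forbids special vertices on $f_2$ and $f_4$, supplying the required boost so the total reaches $2$. If instead at least one of $f_2, f_4$ is a $5^+$-face, the bound is immediate.

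If $v$ has two $3$-faces, the remaining two faces are $5^+$-faces. In the opposite configuration, Claim~\ref{faces303} upgrades both remaining faces to $6^+$-faces, so the total is at least $2$ and $v$ is responsible for no bad vertex. In the adjacent configuration, Claim~\ref{4vertex-335} applied in both cyclic orientations shows that either both remaining faces are $6^+$-faces, or one is a $5$-face incident to a $5^+$-vertex (sending $\frac{6}{7}$) and the other is great (sending at least $\frac{8}{7}$), so the face contribution is at least $2$; here $v$ may be $2$-responsible for the neighbor opposite the shared edge of the two $3$-faces, but only the positive surplus is sent out by (R4). The main obstacle is the careful bookkeeping of special-vertex incidences on incident $4$-faces, since a $4$-face incident to a special vertex sends only $\frac{2}{5}$ (rather than $\frac{3}{5}$) to each non-special incident $4$-vertex; the critical tight configuration is $d(f_1)=3$, $d(f_3)\geq 6$, $d(f_2) = d(f_4) = 4$ in the one-$3$-face case, where Claim~\ref{faces3454} is essential to prevent this drop.
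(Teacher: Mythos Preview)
Your proof is correct and follows essentially the same case analysis as the paper: split on the number of incident $3$-faces, and in each case use the same auxiliary claims (\ref{4vertex-4444}, \ref{4face}, \ref{5face}, \ref{4vertex-304}, \ref{faces3454}, \ref{faces303}, \ref{4vertex-335}) to push the total face contribution to at least $2$. The only organizational difference is in the two-adjacent-$3$-faces subcase: the paper first splits on whether $v$ is responsible (invoking Claims~\ref{4bad}, \ref{3bad-6}, \ref{3bad-7} to force both remaining faces to be $6^+$ when it is), whereas you apply Claim~\ref{4vertex-335} uniformly in both cyclic orientations---this works because that claim has no hypothesis on responsibility, and your trichotomy (both $6^+$, or one is a $5$-face with a $5^+$-vertex and the other is great) is exhaustive. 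Two small cosmetic slips: ``$2$-responsible'' is defined only for $5^+$-vertices (for $4$-vertices it is just ``responsible''), and in your closing commentary the tight one-$3$-face configuration should read $d(f_3)\ge 5$, not $d(f_3)\ge 6$, to match Claim~\ref{faces3454}; neither affects the argument.
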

\sout{
\begin{proof}
Note that $v$ is incident to at most two $3$-faces, otherwise $v$ is a bad vertex. 
If $v$ is incident to two $3$-faces that are not adjacent to each other, then the other two faces have length at least $6$ by Claim~\ref{faces303}.
Thus, $\mu^*(v)\geq -2+2\cdot 1\geq 0$.
If $v$ is incident to two $3$-faces that are adjacent to each other and $v$ is responsible for at least one vertex, then the other two faces must be $6^+$-faces by Claim~\ref{4bad}, Corollary~\ref{3bad-6}, and Corollary~\ref{3bad-7}.
Thus, $\mu^*(v)\geq -2+2\cdot1\geq 0$.

Assume $v$ is incident to two $3$-faces that are adjacent to each other and $v$ is not responsible for any vertex. 
If $v$ is incident to a $4$-face, then $v$ is a special vertex. 
If $v$ is incident to a $5$-face, then by Claim~\ref{4vertex-335}, $v$ is also incident to a $7^+$-face and the $5$-face is incident to a $5^+$-vertex.
Thus, $\mu^*(v)\geq -2+{8\over 7}+{6\over 7}\geq 0$.
If $v$ is incident to $6^+$-faces, then, $\mu^*(v)\geq -2+2\cdot 1\geq 0$.
%

Assume $v$ is incident to one $3$-face $f_1$, where $f_1, f_2, f_3, f_4$ are consecutive faces of $v$.
If $d(f_3)\geq 5$, and the other faces are $4$-faces, then by Claim~\ref{faces3454}, $\mu^*(v)\geq -2+{4\over 5}+2\cdot{3\over 5}\geq0$.
If $d(f_3)\geq 5$, and the other faces are not both $4$-faces, then $\mu^*(v)\geq -2+{4\over 5}+{4\over 5}+{2\over 5}\geq0$.
If $d(f_3)=4$, then by Claim~\ref{4vertex-304}, either $\mu^*(v)\geq -2+{2\over 5}+1+{4\over 5}>0$ or $\mu^*(v)\geq -2+{3\over 5}+{2\over 5}+1\geq0$.

Assume $v$ is incident to only $4^+$-faces.
If a $5^+$-face is incident to $v$, then $\mu^*(v)\geq -2+{4\over 5}+3\cdot{2\over 5}\geq0$.
If $v$ is incident to only $4$-faces, then by Claim~\ref{4vertex-4444}, at least two of the $4$-faces give charge at least $3\over 5$. 
Thus, $\mu^*(v)\geq -2+2\cdot{3\over 5}+2\cdot{2\over 5}\geq0$.
\end{proof}
}

\begin{claim}
Each special vertex $v$ has nonnegative final charge.
\end{claim}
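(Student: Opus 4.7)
The plan is to show that a special vertex $v$ receives at least $2$ units of charge and sends none, so $\mu^\ast(v) \ge -2 + 2 = 0$.

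First, I would record that $v$ sends no charge at all. The rules that move charge between vertices are R4, R5, R6. Rules R5 and R6 only apply to $5^+$-vertices, and R4 only applies to \emph{good} $4$-vertices; since special vertices are excluded from being good, $v$ sends nothing. Also, no vertex ever sends charge \emph{to} $v$, since responsibility is defined only for bad vertices and $v$ is special, not bad — but I only need the outgoing observation, since my goal is a lower bound on $\mu^\ast(v)$.

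Next, I would analyse the four faces incident to $v$. By definition $v$ is incident to exactly two $3$-faces and a $4$-face; by the corollary following the definition of special, the two $3$-faces are adjacent. Label the consecutive incident faces $f_1,f_2,f_3,f_4$ so that $f_1,f_2$ are the two $3$-faces and $f_3$ is the $4$-face (after possibly reversing the cyclic order). The fourth face $f_4$ cannot be a $3$-face, else $v$ would be incident to three $3$-faces and hence bad. Now I apply Claim~\ref{faces3340} directly: since $d(f_1)=d(f_2)=3$ and $d(f_3)=4$, we conclude $d(f_4)\ge 6$.

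Finally, I would tally the incoming charge. The two $3$-faces have initial charge $0$ and contribute nothing. By R1 the $4$-face $f_3$ sends $1$ to the special vertex $v$. By R3, the face $f_4$ contributes $\mu(f_4)/d(f_4)$ to $v$; the excerpt already records that every (degenerate) $6$-face sends $1$ and every great face sends at least $8/7$, so $f_4$ contributes at least $1$. Summing yields
\[
\mu^\ast(v) \;\ge\; -2 + 1 + 1 \;=\; 0,
\]
as required. There is no real obstacle here beyond invoking Claim~\ref{faces3340}; the only point that requires care is verifying that the $4$-face in the definition of special is distinct from $f_4$, which is automatic because $d(f_4)\ge 6 > 4$.
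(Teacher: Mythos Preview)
Your proof is correct and follows essentially the same route as the paper: invoke Claim~\ref{faces3340} to force the fourth face to be a $6^+$-face, then add the charge $1$ from the $4$-face (R1) and at least $1$ from the $6^+$-face (R3). Your version is simply more explicit about why $v$ sends no charge and about the cyclic labelling; the final remark about $f_4\neq f_3$ is harmless but unnecessary, since you already fixed $f_3$ as the $4$-face before applying Claim~\ref{faces3340}.
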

\sout{
\begin{proof}
A special vertex $v$ is incident to two $3$-faces and a $4$-face.
By Claim~\ref{faces3340}, the fourth face must be a $6^+$-face.
Thus, $\mu^*(v)\geq -2+1+1\geq 0$.
\end{proof}
}

\begin{claim}
Each $3$-bad vertex $v$ incident to a great face has positive final charge.
\end{claim}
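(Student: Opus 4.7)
The plan is to combine the $\tfrac{8}{7}$ that the incident great face $f_0$ sends to $v$ via (R3) with contributions from $v$'s four neighbors. Label them $u_1,u_2,u_3,u_4$ cyclically so that $f_0$ lies between $vu_1$ and $vu_2$; then the three triangles incident to $v$ are $vu_2u_3$, $vu_3u_4$, and $vu_4u_1$, so the edges $vu_3,vu_4$ each lie on two triangles while $vu_1,vu_2$ each lie on exactly one.

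If either $u_3$ or $u_4$ is a $5^+$-vertex, it is $2$-responsible for $v$ and sends $1$ via (R5), yielding $\mu^*(v)\ge -2+\tfrac{8}{7}+1=\tfrac{1}{7}>0$.

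The main case is $d(u_3)=d(u_4)=4$. Here Corollary~\ref{3bad-7-44} forces the three faces $g_1,g_2,g_3$ adjacent to the three triangles at $v$ (on the far side from $v$) to be great. Consequently each of $u_3,u_4$ is a good $4$-vertex incident to exactly two triangles and two great faces, so each accumulates charge at least $-2+2\cdot\tfrac{8}{7}=\tfrac{2}{7}$ from (R3) and is responsible only for $v$; by (R4) each therefore sends $\tfrac{2}{7}$ to $v$. At this point $v$ has received only $\tfrac{8}{7}+2\cdot\tfrac{2}{7}=\tfrac{12}{7}$, so an additional $\tfrac{2}{7}$ of incoming charge is needed. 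To obtain it, I would argue that both $u_1$ and $u_2$ must be $5^+$-vertices, each of which is $1$-responsible for $v$ and hence contributes $\tfrac{2}{7}$ via (R6). Suppose for contradiction that $u_2$ were a $4$-vertex. A short face-rotation check at $u_2$ eliminates the chord $u_2u_4$: if $u_2u_4$ were an edge, then the great face $g_1$ adjacent to $vu_2u_3$ at edge $u_2u_3$ would continue through the corner of $u_2$ between $u_3$ and $u_4$, so $g_1$ would contain the edge $u_2u_4$ and thereby coincide with one of $g_2,g_3$, which is impossible because these three great faces are distinct (they pass through distinct corners of $u_3$, $u_4$). Hence $u_2u_4\notin E(G)$, and the set $S=\{v,u_2,u_3,u_4\}$ is $4$-regular with $G[S]$ being the $4$-cycle $vu_2u_3u_4$ together with the chord $vu_3$; this contradicts Lemma~\ref{reducible}$(ii)$. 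Thus $u_2$ is a $5^+$-vertex, and symmetrically so is $u_1$, giving $\mu^*(v)\ge -2+\tfrac{8}{7}+2\cdot\tfrac{2}{7}+2\cdot\tfrac{2}{7}=\tfrac{2}{7}>0$.

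The principal obstacle is the embedding check that excludes the chord $u_2u_4$ (and by symmetry $u_1u_3$) when $u_2,u_3,u_4$ are all $4$-vertices; once this is in hand, the reducibility appeal to Lemma~\ref{reducible}$(ii)$ is immediate and the charge count closes cleanly with the $\tfrac{2}{7}$ contributed by (R6) from each of $u_1,u_2$.
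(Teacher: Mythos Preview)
Your overall strategy and final charge computation coincide with the paper's proof: split on whether a ``middle'' neighbor ($u_3$ or $u_4$ in your labeling) has degree $\ge 5$; if so, (R5) supplies charge $1$ and you are done; otherwise invoke Corollary~\ref{3bad-7-44} so that each of $u_3,u_4$ is a good $4$-vertex with two great faces, responsible only for $v$, and hence passes at least $\tfrac{2}{7}$ to $v$ via (R4); finally argue that both outer neighbors $u_1,u_2$ are $5^+$-vertices and contribute $\tfrac{2}{7}$ each via (R6). The numbers $-2+\tfrac{8}{7}+1$ and $-2+\tfrac{8}{7}+2\cdot\tfrac{2}{7}+2\cdot\tfrac{2}{7}$ are exactly the paper's.

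The one real difference is your justification that $u_1,u_2$ must be $5^+$-vertices when $d(u_3)=d(u_4)=4$. The paper dispatches this in one line by appealing to Lemma~\ref{reducible}: if, say, $d(u_2)=4$, then $\{v,u_2,u_3,u_4\}$ is a $4$-regular set inducing a $4$-cycle with one chord, contradicting~(ii). You instead take a detour to exclude the putative extra edge $u_2u_4$ by an embedding argument, and that argument is not sound as written. First, you assert that $g_1$ ``continues through the corner of $u_2$ between $u_3$ and $u_4$,'' which presupposes that $u_3$ and $u_4$ are consecutive in the rotation at $u_2$; but the fourth neighbor of $u_2$ could sit between them, so $g_1$ need not touch the edge $u_2u_4$ at all. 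Second, the claim that $g_1,g_2,g_3$ are distinct ``because they pass through distinct corners of $u_3,u_4$'' fails on the torus, where a single face may be incident to a vertex at several corners. So this face-rotation check does not close the case. The paper's route avoids this detour entirely and simply invokes Lemma~\ref{reducible}; if you want to match it, drop the embedding paragraph and cite the lemma directly.
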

\sout{
\begin{proof}
Let $u_1, u_2, u_3, u_4$ be the neighbors of $v$ in cyclic order so that $u_1vu_4$ is not a $3$-face. 
According to Lemma~\ref{reducible}, either $d(u_2)=d(u_3)=4$ and $d(u_1), d(u_4)\geq 5$ or $d(u_i)\geq 5$ for some $i\in\{2, 3\}$. 
In the former, $u_2, u_3$ sends charge at least $2\over 7$ since they are incident to two $7^+$-faces by Corollary~\ref{3bad-7-44}.
Thus, $\mu^*(v)\geq -2+{8\over 7}+2\cdot{2\over 7}+2\cdot{2\over 7}>0$. 
In the latter, $\mu^*(v)\geq -2+{8\over 7}+1>0$. 
\end{proof}
}

\begin{claim}
Each $3$-bad vertex $v$ incident to a degenerate $6$-face has positive final charge.
\end{claim}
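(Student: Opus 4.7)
The plan is to mimic the dichotomy used in the proof of the preceding claim for the great-face case. Let $u_1,u_2,u_3,u_4$ be the neighbors of $v$ in cyclic order with $vu_1u_2,vu_2u_3,vu_3u_4$ the three $3$-faces incident to $v$, and let $f$ denote the degenerate $6$-face incident to $v$ (sitting at the corner between $u_4$ and $u_1$). Since $v$ is incident to $f$ only once, Proposition~\ref{degen6}(ii) applied at this corner forces $u_1u_4\in E(G)$; the potential chords $u_1u_3$ and $u_2u_4$ are excluded because either would give $K_5^-\subseteq G[N[v]]$, so $G[N[v]]\cong W_4$. Lemma~\ref{reducible}(ii), applied to each of the four $4$-vertex sets $\{v,u_i,u_j,u_k\}$ (each of which induces a $4$-cycle with a chord through $v$), then yields the crucial bound that at most two of $u_1,u_2,u_3,u_4$ have degree $4$ in $G$.

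Next I analyse $f$ itself. Writing $f$ in the form $xyzayw$ with $v$ appearing once shows that the doubly-incident vertex is one of $u_1,u_4$; by symmetry I may assume it is $u_1$, so $f=v\,u_1\,z\,a\,u_1\,u_4$ cyclically. I claim $\{z,a\}\cap N[v]=\emptyset$: coincidences with any of $v,u_1,u_4$ force loops or multi-edges on $f$; $z=u_3$ or $a=u_3$ would create the edge $u_1u_3$ and hence $K_5^-\subseteq G[N[v]]$; and each of $z=u_2$ and $a=u_2$ produces a simple $6$-cycle in $G$ (namely $v\,u_4\,u_1\,a\,u_2\,u_3$ and $v\,u_1\,z\,u_2\,u_3\,u_4$ respectively), contradicting the $6$-cycle-freeness of $G$. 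Consequently $u_1$ has the five distinct neighbors $v,u_2,u_4,z,a$, so $d(u_1)\geq 5$, and neither $u_2$ nor $u_3$ lies on $f$; Corollary~\ref{3bad-6} then gives that each of $u_2,u_3$ has, besides its two incident $3$-faces, two incident great faces.

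The discharging computation now splits into two cases. In Case A, $d(u_2)=d(u_3)=4$, so the ``at most two degree-$4$'' bound forces $d(u_4)\geq 5$ as well. Each of $u_2,u_3$ is a good $4$-vertex responsible only for $v$, receives at least $2\cdot\tfrac{8}{7}=\tfrac{16}{7}$ from its two great faces, and so passes excess $\geq\tfrac{2}{7}$ to $v$ via (R4); $u_1$ and $u_4$ each send $v$ the amount $\tfrac{2}{7}$ via (R6); and $f$ sends $v$ charge $1$ via (R3). Summing, $\mu^*(v)\geq -2+1+4\cdot\tfrac{2}{7}=\tfrac{1}{7}>0$. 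In Case B, at least one of $u_2,u_3$, say $u_2$, has degree $\geq 5$; then $u_2$ is $2$-responsible for $v$ and sends $1$ via (R5), while $u_1$ still sends $\tfrac{2}{7}$ via (R6) (using $d(u_1)\geq 5$). Together with the $1$ from $f$, we get $\mu^*(v)\geq -2+1+1+\tfrac{2}{7}=\tfrac{2}{7}>0$.

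The main obstacle is the structural step placing $u_2$ and $u_3$ outside $f$: if one of them were an inner vertex of $f$, then it would lose one great face (replaced by $f$ of length $6$), cutting the corresponding (R4) excess from $\tfrac{2}{7}$ down to $\tfrac{1}{7}$ and degrading the Case A bound all the way to $0$. The joint use of the $K_5^-$-freeness and the $6$-cycle-freeness, together with the already-established edges $u_1u_2,u_2u_3,u_3u_4,u_1u_4$, is precisely what closes this last gap.
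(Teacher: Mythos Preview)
Your proof is correct and follows essentially the same two-case split as the paper (either $d(u_2)=d(u_3)=4$, forcing $d(u_1),d(u_4)\ge 5$, or some $u_i\in\{u_2,u_3\}$ is a $5^+$-vertex), with the same charge tallies $-2+1+4\cdot\tfrac{2}{7}>0$ and $-2+1+1+\tfrac{2}{7}>0$.

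The one substantive difference is that you insert a structural analysis of the degenerate $6$-face to show $\{z,a\}\cap N[v]=\emptyset$, hence $u_2,u_3\notin f$ and $d(u_1)\ge 5$. The paper simply invokes Corollary~\ref{3bad-6} to assert that the two non-triangular faces at each $4$-vertex $u_i$ ($i\in\{2,3\}$) are great, without pausing to check that neither of those faces coincides with $f$ itself (which, being incident to $v$, would fall outside the hypothesis of Corollary~\ref{3bad-6}). Your argument makes this step explicit and airtight. In Case~B you then spend the extra fact $d(u_1)\ge 5$ (from the face analysis) in place of the paper's use of the remaining vertex among $u_2,u_3$; either route works. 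Note that your ``by symmetry assume $u_1$ is the doubly-incident vertex'' together with ``say $u_2$ has degree $\ge 5$'' is fine, since the Case~B computation only uses the fixed fact $d(u_1)\ge 5$ and the $2$-responsibility of whichever of $u_2,u_3$ is the $5^+$-vertex.
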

\sout{
\begin{proof}
Let $u_1, u_2, u_3, u_4$ be the neighbors of $v$ in cyclic order so that $u_1, v, u_4$ is not a $3$-face.
For $i\in\{2, 3\}$, if $d(u_i)=4$, then it sends charge at least $2\over 7$ since it is incident to two $7^+$-faces by Corollary~\ref{3bad-6}.
According to Lemma~\ref{reducible}, either $d(u_2)=d(u_3)=4$ and $d(u_1), d(u_4)\geq 5$, or $d(u_i)\geq 4$ and $d(u_j)\geq 5$ for $\{i, j\}=\{2, 3\}$.
In the former case, $\mu^*(v)\geq -2+1+2\cdot{2\over 7}+2\cdot{2\over 7}>0$. 
In the latter case, $\mu^*(v)\geq -2+1+1+{2\over 7}>0$.
\end{proof}
}

\begin{claim}
Each $4$-bad vertex $v$ has positive final charge.
\end{claim}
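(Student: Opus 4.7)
The plan is to show that the $4$-bad vertex $v$, which starts with $\mu(v)=-2$ and receives no charge from its four incident $3$-faces, extracts more than two units of charge from its four neighbors through rules (R4)--(R6). Write $u_1,u_2,u_3,u_4$ for the neighbors of $v$ in cyclic order; the four $3$-faces at $v$ force every rim edge $u_iu_{i+1}$ into $G$, and any chord $u_iu_{i+2}$ would produce $K_5^-\subseteq G[\{v,u_1,\ldots,u_4\}]$, so no chord exists.

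First I would show that at least two of the $u_i$ are $5^+$-vertices, via Lemma~\ref{reducible}. If all four had degree $4$, then $\{u_1,u_2,u_3,u_4\}$ would be a $4$-regular set inducing exactly a $4$-cycle, contradicting (i); if exactly one, say $u_1$, had degree $\geq 5$, then $\{v,u_2,u_3,u_4\}$ would be a $4$-regular set inducing a $4$-cycle with the chord $vu_3$, contradicting (ii).

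The key structural step, and the one I view as the main obstacle, is to argue that every $4$-vertex among the $u_i$ is in fact \emph{good}. By Corollary~\ref{bad-adjacent} no $u_i$ is bad, and since $u_iv$ is already flanked by two $3$-faces, $u_i$ has exactly two incident $3$-faces. Claim~\ref{4bad} then forces each of the two remaining faces at $u_i$ to be great (length $\geq 7$), so $u_i$ has no incident $4$-face and cannot be special; hence every $4$-vertex neighbor is good. Without this step, special $4$-neighbors would contribute $0$ under (R4) and the charge bound would fail.

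The conclusion is a short tally. Each $5^+$-neighbor is $2$-responsible for $v$ and contributes $1$ via (R5). Each good $4$-neighbor $u_i$ is responsible only for $v$ (its unique edge flanked by two $3$-faces is $u_iv$) and receives at least $2\cdot\tfrac{8}{7}=\tfrac{16}{7}$ from its two incident $7^+$-faces under (R3), so its excess of at least $\tfrac{2}{7}$ flows entirely to $v$ via (R4). Writing $k\geq 2$ for the number of $5^+$-neighbors,
\[
\mu^*(v)\;\geq\;-2+k+(4-k)\cdot\tfrac{2}{7}\;=\;-2+\tfrac{5k+8}{7}\;\geq\;\tfrac{4}{7}\;>\;0,
\]
which finishes the proof.
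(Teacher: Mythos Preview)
Your argument is correct and follows essentially the same approach as the paper: both invoke Lemma~\ref{reducible} to guarantee at least two $5^+$-neighbors, and both use Claim~\ref{4bad} to conclude that any $4$-vertex neighbor is incident to two great faces and hence sends at least $\tfrac{2}{7}$ of excess to $v$. Your version is simply more explicit---you spell out why the $4$-neighbors are good (not special) and why they are responsible only for $v$, points the paper leaves implicit in its one-line justification.
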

\sout{
\begin{proof}
According to Lemma~\ref{reducible}, at least two vertices in $N(v)$ must have degree at least $5$. 
Note that each $4$-vertex in $N(v)$ sends charge $2\over 7$ since they are not incident to $6$-faces by Claim~\ref{4bad}.
Thus, $\mu^*(v)\geq -2+2\cdot 1+2\cdot{2\over 7}>0$.
\end{proof}
}

Since each bad vertex has positive final charge, there are no bad vertices.
Since each $5^+$-vertex $v$ that is not adjacent to bad vertices has positive final charge, it must be the case that $G$ is $4$-regular.
Since there is no $K^-_5$, there is no $K_5$, and by Theorem~\ref{deg-choos}, we know that $G$ is $4$-choosable, which contradicts the assumption that $G$ is a counterexample.

\section{Sharpness Examples}

In this section, we show that Theorem~\ref{result} is sharp by showing that we must forbid both $K^-_5$ and $6$-cycles. 
It is worth mentioning that both infinite families of graphs is embeddable on any surface, orientable or non-orientable, except the plane and projective plane. 
Note that Theorem~\ref{noC6} disproves a conjecture in \cite{2010CaWaZh}.

\begin{figure}[h]
\centering
\includegraphics{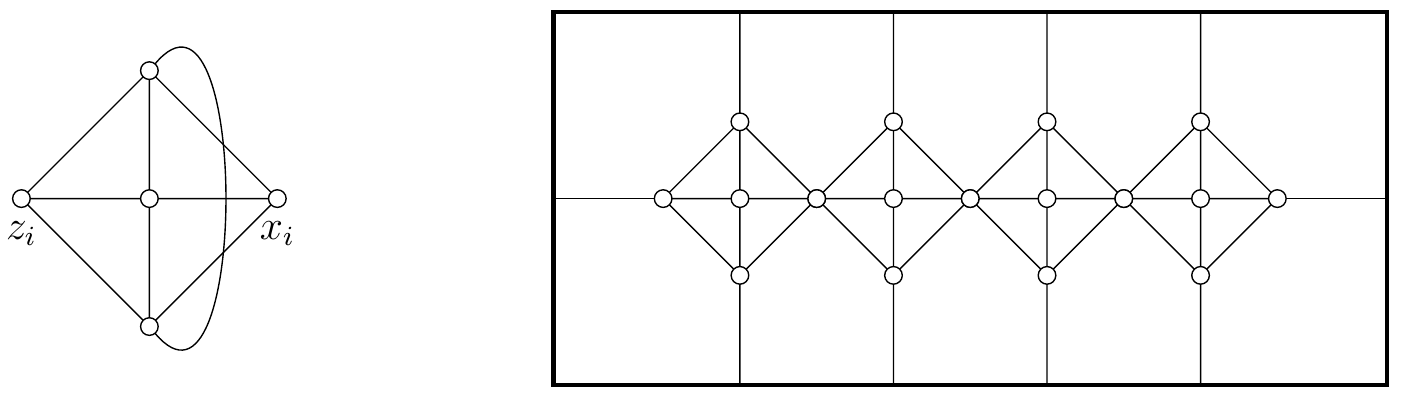}
  \caption{The graphs $H_i$ (left) and an embedding of $G_4$ on the torus (right).}
  \label{fig:tikz:fig-counterexampleC6}
\end{figure}

\begin{theorem}\label{noC6}
For each $k\geq 6$, there exists an infinite family of toroidal graphs without $\ell$-cycles for any $6\leq \ell\leq k$ with chromatic number $5$. 
\end{theorem}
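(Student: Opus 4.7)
The plan is to exhibit, for each $k\geq 6$, an explicit infinite family $\{G_n\}$ of toroidal graphs built by stitching together many copies of a gadget $H_i$ whose ``height'' $i$ is chosen in terms of $k$. Figure~\ref{fig:tikz:fig-counterexampleC6} already pictures the base case: a gadget $H_i$ with two distinguished boundary vertices, together with an embedding of $G_4$ on the torus obtained by arranging several $H_i$'s cyclically. My proposal is to generalize this construction by enlarging $i$ to push ``accidental'' cycles past length $k$, and to vary $n$ to obtain infinitely many graphs for each fixed $k$.

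First, I would establish a color-forcing property of $H_i$: in every proper $4$-coloring of $H_i$, the colors of its two boundary vertices are related in a prescribed way (say, must differ by a fixed permutation). This is proved by a direct case analysis, exploiting overlapping copies of a small $4$-chromatic fragment inside $H_i$ (for example $K_4$ minus an edge, which appears naturally inside the gadget) so that only one equivalence class of boundary color-patterns extends to the interior. Second, I would assemble $G_n$ by wiring $n$ copies of $H_i$ into a cyclic chain running once around a noncontractible curve of the torus, so that the forced boundary relations compose to a nontrivial permutation of the $4$ colors, contradicting the existence of a proper $4$-coloring. This is the classical ``transport around the torus'' obstruction that is unavailable on the plane or the projective plane, which matches the claim about the embeddability of the family. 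To establish $\chi(G_n)\leq 5$, I would exhibit an explicit $5$-coloring that uses the fifth color to break exactly one gadget in the chain; combined with non-$4$-colorability, this gives $\chi(G_n)=5$.

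The main obstacle is the cycle-length verification: no cycle of length $\ell\in[6,k]$ may appear in $G_n$. To handle this, I would equip each vertex with coordinates inherited from the toroidal embedding, so that closed walks correspond to edge sequences whose coordinate displacements sum to zero in $\mathbb{Z}/n\oplus\mathbb{Z}/i$. Cycles then split into two classes: those contained within a single copy of $H_i$, which I would design so that every internal cycle has length at most $5$; and cycles that traverse at least one full gadget, whose length is bounded below by a function of $i$ that exceeds $k$ once $i$ is chosen large enough in terms of $k$. A short case check, combined with the explicit coordinate bookkeeping, should rule out every intermediate length in $[6,k]$. Finally, letting $n$ range over an infinite set (subject to parity conditions needed for the noncontractible transport to be obstructive) gives the claimed infinite family, and the whole construction lifts verbatim to any surface other than the plane and the projective plane since the gluing pattern uses only a single noncontractible cycle.
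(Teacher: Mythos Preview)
Your overall plan---chain gadgets cyclically, exploit a color-forcing property at the boundary vertices, then bound cycle lengths---is the paper's plan, but you have overcomplicated every step and misread the figure in ways that matter.

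The index $i$ in $H_i$ is not a ``height'' parameter: every $H_i$ is simply $K_5^-$, and $i$ merely labels the copy. The parameter one varies to kill cycles of length in $[6,k]$ is the number $s$ of copies, not the gadget's size; taking $s\ge\lceil k/2\rceil$ already suffices, and letting $s$ grow further gives the infinite family. The color-forcing property is a one-line observation with no case analysis (and certainly not via ``$K_4$ minus an edge'', which is bipartite and forces nothing): in any proper $4$-coloring of $K_5^-$ the two nonadjacent vertices share three common neighbors, so they must receive the \emph{same} color---the ``prescribed permutation'' is the identity. Identifying $x_i$ with $z_{i+1}$ therefore forces one color on all boundary vertices, and the extra edge $x_sz_1$ yields the contradiction. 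This is a purely combinatorial obstruction; it is not a ``transport around the torus'' phenomenon and holds for the abstract graph regardless of any embedding---the torus enters only to certify embeddability, so no parity condition on the number of copies is needed either. Finally, no coordinate lattice is required for the cycle count: deleting $x_sz_1$ leaves a graph whose blocks are the $s$ copies of $K_5^-$, so any cycle avoiding that edge lies in a single block and has length at most $5$, while any cycle using that edge must pass through every block and has length at least $2s+1>k$.
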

\begin{proof}
Let $H_i$ be a complete graph on $5$ vertices minus an edge where $x_i$ and $z_i$ are the vertices of degree $3$. 
Create $G_s$ in the following way: given $s$ copies of $H_i$, identify $x_i$ and $z_{i+1}$ for $i\in[s-1]$, and also add the edge $x_sz_1$. 

Let $s\geq \lceil{k\over 2}\rceil$ and consider $G_s$. 
It is easy to check that any $4$-coloring of $G_s$ must assign the same color to all the identified vertices as well as $x_s, z_1$, which is a contradiction since $x_sz_1$ is an edge. 
This shows $G_s$ is not $4$-colorable, which further implies that it is not $4$-choosable. 
For each cycle in $G_s$, if it uses the the edge $x_sz_1$, then it must have length at least $2s+1$, which is at least $k+1$. 
All other cycles are contained within a copy of $H_i$, and has length at most $5$. 
It is easy to check that there is no $K_5$ and that there is a $5$-coloring of $G_s$.
Note that $G_s$ is toroidal, as seen in Figure~\ref{fig:tikz:fig-counterexampleC6}.
\end{proof}

\begin{figure}[h]
\centering
\includegraphics{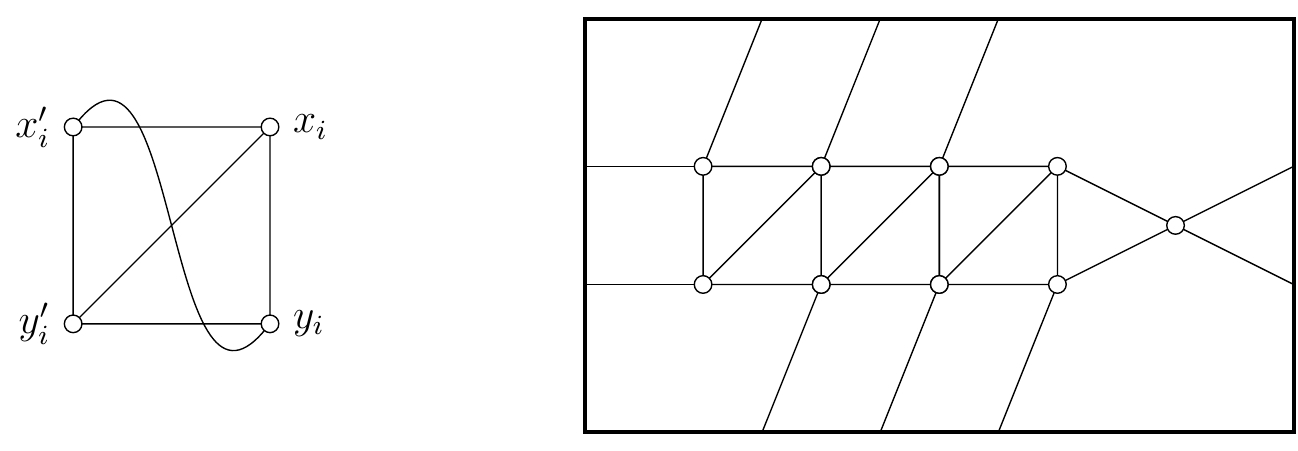}
  \caption{The graphs $H'_i$ (left) and an embedding of $G'_1$ on the torus (right).}
  \label{fig:tikz:fig-counterexampleK5e}
\end{figure}

\begin{theorem}\label{noK5e}
There exists an infinite family of hamiltonian toroidal graphs without $K^-_5$ with chromatic number $5$.
\end{theorem}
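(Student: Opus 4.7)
The plan is to mirror the chain construction from Theorem~\ref{noC6}. First, I would fix a building block $H'_i$ on a small vertex set together with two distinguished non-adjacent port vertices $x_i, z_i$, requiring: (a) $H'_i$ contains no $K_5^-$; (b) every proper $4$-coloring of $H'_i$ assigns the same color to $x_i$ and $z_i$; (c) $H'_i$ has a Hamilton $x_i z_i$-path; and (d) $H'_i$ has a plane embedding in a closed disk with $x_i, z_i$ on the outer boundary. The gadget $H_i = K_5^-$ used in Theorem~\ref{noC6} already satisfies (b), (c), (d), so the task is to modify it to also satisfy (a).

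Once such an $H'_i$ is in hand, I would define $G'_s$ by taking $s$ copies $H'_1, \dots, H'_s$, identifying $x_i$ with $z_{i+1}$ for $i \in [s-1]$, and adding the edge $x_s z_1$. Iterating (b) around the chain forces every port vertex (including both endpoints of the added edge) to receive the same color in any hypothetical $4$-coloring, contradicting the edge $x_s z_1$ and showing $\chi(G'_s) \geq 5$; an explicit $5$-coloring gives equality. Concatenating the Hamilton $x_i z_i$-paths from (c) across the identifications and closing with the edge $x_s z_1$ yields a Hamilton cycle. Placing the disks from (d) in sequence around a non-contractible cycle of the torus, as illustrated for $s = 1$ in Figure~\ref{fig:tikz:fig-counterexampleK5e}, gives a toroidal embedding. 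Freedom from $K_5^-$ in $G'_s$ reduces to (a) for each copy, together with the observation that each identification is at a single vertex and the added edge $x_s z_1$ is the only edge between $H'_s$ and $H'_1$, so no new five-vertex set can span nine edges. Letting $s$ grow produces the required infinite family.

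The main obstacle is exhibiting a gadget $H'_i$ satisfying both (a) and (b). The cleanest way to force two non-adjacent vertices to share a color under a $4$-coloring, namely giving them three common neighbors that form a triangle, is literally $K_5^-$, which is forbidden. Hence $H'_i$ must transmit the color constraint through a more elaborate configuration whose every five-vertex subgraph spans at most eight edges, and ensuring the disk embedding and Hamilton path are simultaneously preserved is the delicate part of the argument. Once $H'_i$ is constructed, all remaining verifications reduce to routine local checks.
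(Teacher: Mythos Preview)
Your outline is coherent, but the proof is not complete: you never exhibit a gadget $H'_i$ satisfying both~(a) and~(b), and you yourself flag this as ``the main obstacle.'' Everything else in your plan is indeed routine once such a gadget exists, so the entire argument currently rests on an unfulfilled promise. Such gadgets do exist (for instance, one can chain three copies of $K_4$ along shared edges and attach a single degree-$3$ vertex to pin down one colour), but until you write one down and verify (a)--(d) for it, the proposal is a strategy rather than a proof.

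The paper sidesteps exactly the difficulty you isolate by abandoning the single-port template of Theorem~\ref{noC6}. Instead of one port on each side, each block $H'_i$ is just $K_4$ with \emph{two} ports on each side; consecutive blocks are glued along an edge, and a single extra vertex $z$ is made adjacent to the four outer ports at the two ends of an odd-length chain. In any $4$-colouring the two port colours alternate with their complement along the chain, so after an odd number of blocks the four neighbours of $z$ use all four colours and $z$ cannot be coloured. Because every block has only four vertices, the $K_5^-$-freeness condition~(a) is vacuous inside a block, and gluing along an edge or adding the degree-$4$ vertex $z$ cannot create a $K_5^-$ either. Thus the paper trades your unspecified single-port gadget for the trivial gadget $K_4$ at the cost of a slightly different gluing pattern and one extra vertex; this is both simpler and eliminates the step you could not carry out.
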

\begin{proof}
Let $H'_i$ be a complete graph on $4$ vertices where $x_i$, $x'_i$, $y_i$, $y'_i$ are the vertices of $H_i$.
Create $G'_s$ in the following way: given $2s+1$ copies of $H'_i$, identify $x_i$ with $x'_{i+1}$ and identify $y_i$ with $y'_i$ for $i\in[2s]$, and also add a vertex $z$ and the edges $zx'_1$, $zy'_1$, $zx'_{2s+1}$, and $zy'_{2s+1}$.

It is easy to check that any $4$-coloring of $G'_s$ must assign different colors to the neighbors of $z$, which implies that $G'_s$ is not $4$-colorable; this further implies that it is not $4$-choosable. 
It is easy to check that $G'_s$ is hamiltonian and there is no $K^-_5$.
Note that $G'_s$ is toroidal, as seen in Figure~\ref{fig:tikz:fig-counterexampleK5e}.
\end{proof}

\section{Acknowledgments}

The author thanks Xuding Zhu for suggesting the statement of Theorem~\ref{result} and Alexandr V. Kostochka for improving the readability of the paper.

\bibliographystyle{plain}
\bibliography{Torus_NoK5eC6_4choos}

\begin{thebibliography}{10}

\bibitem{1999BoMoSt}
T.~B{\"o}hme, B.~Mohar, and M.~Stiebitz.
\newblock Dirac's map-color theorem for choosability.
\newblock {\em J. Graph Theory}, 32(4):327--339, 1999.

\bibitem{2013Bo}
O.V. Borodin.
\newblock Colorings of plane graphs: A survey.
\newblock {\em Discrete Mathematics}, 313(4):517 -- 539, 2013.

\bibitem{2010CaWaZh}
L. Cai, W. Wang, and X. Zhu.
\newblock Choosability of toroidal graphs without short cycles.
\newblock {\em J. Graph Theory}, 65(1):1--15, 2010.

\bibitem{2009Fa}
B. Farzad.
\newblock Planar graphs without 7-cycles are 4-choosable.
\newblock {\em SIAM J. Discrete Math.}, 23(3):1179--1199, 2009.

\bibitem{00Ko}
A. Kostochka.
\newblock Private communication.

\bibitem{1999LaXuLi}
P. Lam, B. Xu, and J. Liu.
\newblock The {$4$}-choosability of plane graphs without {$4$}-cycles.
\newblock {\em J. Combin. Theory Ser. B}, 76(1):117--126, 1999.

\bibitem{1994Th}
C. Thomassen.
\newblock Every planar graph is {$5$}-choosable.
\newblock {\em J. Combin. Theory Ser. B}, 62(1):180--181, 1994.

\bibitem{1993Vo}
M. Voigt.
\newblock List colourings of planar graphs.
\newblock {\em Discrete Math.}, 120(1-3):215--219, 1993.

\bibitem{2001WaLi}
W. Wang and K.-W. Lih.
\newblock The 4-choosability of planar graphs without 6-cycles.
\newblock {\em Australas. J. Combin.}, 24:157--164, 2001.

\bibitem{2002WaLi}
W. Wang and K.-W. Lih.
\newblock Choosability and edge choosability of planar graphs without five
  cycles.
\newblock {\em Appl. Math. Lett.}, 15(5):561--565, 2002.

\bibitem{00Zh}
X. Zhu.
\newblock Private communication.

\end{thebibliography}

%
%

\end{document}